\numberwithin{equation}{section}
\numberwithin{figure}{section}
\theoremstyle{plain}
\newtheorem{lemma}{Lemma}[section]
\newtheorem{proposition}[lemma]{Proposition}
\newtheorem{corollary}[lemma]{Corollary}
\newtheorem{theorem}[lemma]{Theorem}
\theoremstyle{definition}
\newtheorem{definition}[lemma]{Definition}
\newtheorem{remark}[lemma]{Remark}
\newtheorem{example}[lemma]{Example}
\newcommand{\R}{\mathbb{R}}
\newcommand{\N}{\mathbb{N}}
\newcommand{\diff}{\,\textnormal{d}}
\newcommand{\ind}{\mathbf{1}}
\renewcommand{\P}{\mathbb{P}}
\newcommand{\F}{\mathcal{F}}
\newcommand{\E}{\mathbb{E}}
\newcommand{\obj}{M}
\newcommand{\val}{V}
\newcommand{\Pcal}{\mathcal{P}}
\newcommand{\weak}{\tau_{\textnormal{w}}}
\newcommand{\dvechaus}{\vec{d}}
\newcommand{\cpt}{\mathrm{K}}
\DeclareMathOperator{\skewsym}{A}
\DeclareMathOperator{\ortho}{SO}
\newcommand{\hilbert}{\mathbb{H}}
\newcommand{\covspace}{\mathbb{K}}
\DeclareMathOperator{\trace}{tr}
\begin{document}
	
	\title{Fr\'echet Means in Infinite Dimensions}
	
	\author{Adam Quinn Jaffe}
	\address{Department of Statistics\\
		Columbia University\\ 
		New York, NY 10027 \\
		U.S.A.}
	\email{a.q.jaffe@columbia.edu}
	
	\subjclass[2020]{46T20, 49J52, 60F15, 62R20}
	
	\keywords{barycenter, Fr\'echet mean, Grenander's pattern theory, Hadamard space, Jost's convergence, statistical shape anaysis, uniformly convex Banach space, Wasserstein space, weak convergence}
	
	\date{\today}
	
	\begin{abstract}
		While there exists a well-developed asymptotic theory of Fr\'echet means of random variables taking values in a general ``finite-dimensional'' metric space, there are only a few known results in which the random variables can take values in an ``infinite-dimensional'' metric space.
		Presently, we develop a general asymptotic theory of Fr\'echet means in some infinite-dimensional metric spaces, which allows us to recover, strengthen, and generalize most existing results; in particular, we develop novel asymptotic theory for Fr\'echet means in some infinite-dimensional metric spaces from statistical shape analysis.
		The core of the proof is a novel notion of weak convergence in general metric spaces for which the results can be proven via calculus of variations.
	\end{abstract}
	
	\maketitle

	\tableofcontents

	\section{Introduction}\label{sec:intro}
	
	\subsection{Problem Statement} For a metric space $(X,d)$, a real number $p \ge 1$, and a finite set of points $Y_1,Y_2,\ldots, Y_n\in X$, we consider the optimization problem:
	\begin{equation}\label{eqn:obj-ex}
		\begin{cases}
			\textrm{minimize } & \frac{1}{n}\sum_{i=1}^{n}d^p(x,Y_i) \\ 
			\textrm{ over } & x\in X.
		\end{cases}
	\end{equation}
	This problem should be familiar when $X=\R$ and $d$ is its usual metric.
	If $p=2$ then the solution is unique and given by the mean of $Y_1,\ldots, Y_n$, and, if $p=1$, then solutions to \eqref{eqn:obj-ex} are exactly the medians of $Y_1,\ldots, Y_n$, although uniqueness need not hold in general.
	
	As such, many authors, especially in the field of \textit{non-Euclidean statistics,} regard a solution of \eqref{eqn:obj-ex} as a canonical notion of central tendency in a general metric space, although this object goes by many different names in the literature.
	For general $p$, they are most often called \textit{Fr\'echet $p$-means} \cite{ManifoldsI, ManifoldsII, EvansJaffeFrechet, Schoetz2, FrechetPersistence} or \textit{$L^p$ centers of mass} \cite{Afsari, Karcher, Jost, Kendall}.
	In the special case of $p=2$ these may be simply abbreviated to \textit{Fr\'echet means} or \textit{centers of mass}, although it is also common to refer to them as \textit{barycenters} \cite{AGP, SturmNPC, Picard, WassersteinBarycenter, WassersteinConvergence, AguehCarlier}.
	In the $p=1$ case they can be referred to as either \textit{medians} or \textit{geometric medians} \cite{TurnerMedians, WassersteinMedian}, although some authors use the term \textit{Fermat-Weber point} \cite{FermatWeberRevisited, ChandrasekaranTamir} in reference to the \textit{Fermat-Weber problem} or the closely-related \textit{facility location problem}.
	There is also a natural extension of this theory to the $p=\infty$ case (which we will not discuss in this paper), sometimes called \textit{circumcenters}, \textit{Chebyshev centers}, or otherwise  \cite{EvansLidman, CuestaMatran, Lember}.
	In this work we prefer to use the term Fr\'echet $p$-mean, but usually $p$ will be understood from context and hence omitted.
	
	In analogy with the usual limit theorems for averages of random variables in Euclidean spaces, it is natural to hope that there is a nice asymptotic theory for solutions to problem \eqref{eqn:obj-ex}, under suitable probabilistic assumptions on the underlying random variables $Y_1,Y_2,\ldots$ and under suitable geometric assumptions on the underlying metric space $(X,d)$.
	To state such results precisely, we need to introduce some notation.
	Since the problem \eqref{eqn:obj-ex} depends on $Y_1,\ldots, Y_n$ only through the empirical measure $\bar\mu_n:=\frac{1}{n}\sum_{i=1}^{n}\delta_{Y_i}$, it makes sense to define
	\begin{equation}\label{eqn:obj-def}
		\obj_p(\mu) := \underset{x\in X}{\arg\min}\int_{X}d^p(x,y)\diff\mu(y),
	\end{equation}
	for $\mu$ any Borel probability measure $\mu$ on $(X,d)$ with $p$ finite moments; we take this to be the entire set of minimizers, which can be empty, a singleton, or contain multiple points.
	In fact, one can relax the condition of $p$ finite moments to a condition of $p-1$ finite moments by using $|d^p(x, y)-d^p(o,y)|\le pd(x,o)(d^{p-1}(x,y)+d^{p-1}(o,y))$ for all $o,x,y,\in X$, which holds in any metric space; then for fixed $o\in X$, we define the set
	\begin{equation}\label{eqn:obj-renorm}
		\obj_{p}(\mu;o) := \underset{x\in X}{\arg\min}\int_{X}(d^p(x,y)-d^p(o,y))\diff\mu(y).
	\end{equation}
	Note that $\obj_{p}(\mu;o)$ does not depend on $o\in X$, and that we have $\obj_p(\mu) = \obj_p(\mu;o)$ whenever $\mu$ has $p$ finite moments, so this is in some sense a canonical extension of $M_p$.
	For the sake of exposition, let us focus for the moment on the question ofwhen  the strong law of large numbers (SLLN) holds, meaning $M_p(\bar \mu_n)$ converges almost surely to $M_p(\mu)$ when $Y_1,Y_2,\ldots$ are identically-distributed (IID) samples from distribution $\mu$ on some probability space; later we will address the case of further limit theorems of interest.
	
	Many authors have formulated versions of the SLLN for Fr\'echet means \cite{Ziezold, SverdrupThygeson, EvansJaffeFrechet, Schoetz2, Huckemann, ManifoldsI, ManifoldsII} under suitable assumptions; the strongest general result to date is the recent work of Sch\"otz \cite{Schoetz2} in which the following is shown: If $(X,d)$ is a Heine-Borel metric space, meaning all closed, bounded sets are compact, and if $Y_1,Y_2,\ldots$ is an IID sequence of $X$-valued random variables with common distribution $\mu$ possessing $p-1$ finite moments, then we have
	\begin{equation}\label{eqn:SLLN}
		\max_{\bar x_n\in \obj_p(\bar \mu_n)}\min_{x\in \obj_p(\mu)}d(\bar x_n,x) \to 0
	\end{equation}
	almost surely.
	We regard this as a sort of uniform guarantee ``no false positives'' in the sense of statistical testing, since it shows that all points in $\obj_p(\bar \mu_n)$ are close to some point of $\obj_p(\mu)$, asymptotically as $n\to\infty$.
	
	The primary limitation of the work of Sch\"otz above is that, since a Heine-Borel metric space is essentially ``finite-dimensional'', the result does not directly establish any asymptotic theory for Fr\'echet means in various ``infinite-dimensional'' metric spaces of interest non-Euclidean statistics.
	In some particular cases (e.g., the Wasserstein space of probability measures \cite{WassersteinBarycenter}, the space of persistence diagrams \cite{DivolLacombe}, and metric spaces of non-positive curvature \cite{SturmNPC}), asymptotic theory of Fre\'chet means has been established by other methods.
	But for some metric spaces of interest in statistical shape analysis \cite{MichorMumford, Kurtek, Sundaramoorthi, BruverisMichorMumford, TumpachPreston}, no asymptotic theory for Fr\'echet means is known at all, despite their ubiquitous role in various statistical and computational procedures.
	Thus, the goal of this work is to develop a sufficiently general asymptotic theory of Fr\'echet means in abstract metric spaces which unifies these previous examples and extends to additional examples of practical interest in applications.
	
	A secondary goal of this work is to establish this asymptotic theory under the minimal moment condition of $p-1$ finite moments.
	Indeed, most works on Fr\'echet means do not employ the renormalization~\eqref{eqn:obj-renorm} as in Sch\"otz \cite{Schoetz2}, and hence they are forced to focus on the unnecessarily strong assumption of $p$ finite moments \cite{Huckemann, ManifoldsI, ManifoldsII, WassersteinBarycenter, DivolLacombe, Ziezold, SverdrupThygeson, Austin, Thorpe, PanaretosSantoroBW}; exceptions include \cite{SturmNPC} where some additional convexity structure is present.
	Thus, even in cases where the SLLN is already known (e.g., \cite{WassersteinBarycenter,DivolLacombe}), our goal is to provide this improvement on the moment conditions.
	Note that in general one cannot dispense with $p-1$ finite moments for convergence of the Fr\'echet $p$-mean, since in the case of the real line $X=\R$ with its usual metric and $p=2$ we know that the SLLN fails without finite expectation.
	
	\subsection{Summary and Outline}
	
	In this subsection we explain how this work achieves both of the goals set forth above.
	The subsection is divided into three parts which each describe the main results of a section in the body of the paper.
	
	\subsubsection*{Weak Convergence (Section~\ref{sec:weak})}
	
	A careful inspection of some existing results on the SLLN for Fr\'echet means \cite{Schoetz2, WassersteinBarycenter,PersistenceUniqueness} reveals that most proofs are based on the direct method of calculus of variations.
	To state this more precisely, let us assume that $Y_1,Y_2,\ldots$ are IID samples from a distribution $\mu$ on a metric space $(X,d)$, and let us take some $\{x_n\}_{n\in\N}$ in $X$ with $x_n\in M_p(\mu_n)$ for all $n\in\N$.
	Each of the authors above, in their respective metric space of interest, identifies a suitable notion of ``weak convergence'' such that the following steps can be executed:
	\begin{itemize}
		\item[(S0)] The sequence $\{x_n\}_{n\in\N}$ is $d$-bounded.
		\item[(S1)] There exists $\{n_k\}_{k\in\N}$ and $x$ in $X$ with $x_{n_k}\to x$ weakly.
		\item[(S2)] The point $x$ lies in $\obj_p(\mu)$.
		\item[(S3)] We in fact have $x_{n_k}\to x$ in $d$.
	\end{itemize}
	The first part of the paper is dedicated to a careful analysis of a particular notion of weak convergence in abstract metric spaces, which is amenable to the analysis above.
	
	Roughly speaking, we say (Definition~\ref{def:weak-like}) that a \textit{weak convergence} for a metric space $(X,d)$ is a convergence satisfying
	\begin{itemize}
		\item[(W1)] $d$-bounded sets are relatively weakly compact.
		\item[(W2)] $d$ is weakly lower-semicontinuous.
		\item[(W3)] $d$-convergence is weak convergence plus convergence of distance to a point.
	\end{itemize}
	We note that weak convergences are not required to correspond to convergence in a bona fide topology on $(X,d)$ and that, in case there is such a topology, it may be neither metrizable nor second countable.
	
	Our results on weak convergences are twofold.
	First (Subsection~\ref{subsec:ex}, summarized in Table~\ref{tab:weak-like}), we show that many metric spaces encountered non-Euclidean statistics admit a weak convergence in the sense described above; these include abstract Heine-Borel metric spaces, uniformly convex Banach spaces, Hadamard spaces, and the Wasserstein space.
	Second (Subsection~\ref{subsec:closure}, summarized in Table~\ref{tab:weak-closure}), we show that the class of metric spaces admitting a weak convergence is closed under many operations including restriction to subspaces, finite products, quotients by compact groups acting by isometries, and deformation by proper metric groups acting by isometries.
	These two collections of results, when combined, provide a calculus for establishing the existence of a weak convergence for further metric spaces encountered in practice, and we use this technique to analyze many examples (Subsection~\ref{subsec:further}) including the rotationally-invariant Wasserstein space, spaces of images deformed by rotations, a Sobolev metric on the the space of unparameterized unregistered planar loops, the space of persistence diagrams endowed with the partial matching metric, and the Bures-Wasserstein metric on a space of infinite-dimensional covariance operators.
	
	\subsubsection*{The Main Theorem (Section~\ref{sec:main})}
	
	The bulk of the paper is spent proving our main technical result, that the Fr\'echet mean is a continuous function of its underlying probability measure, whenever the metric space $(X,d)$ is separable and admits a weak convergence.
	
	To state this precisely, let us introduce some notation.
	For $r\ge 0$, we write $\Pcal_r(X)$ for the space of all Borel probability measures $\mu$ on $(X,d)$ such that for some (equivalently, all) $x\in X$ we have  $\int_{X}d^{r}(x,y)\diff \mu(y)<\infty$ and we write $\weak^r$ for the topology on $\Pcal_r(X)$ such that $\{\mu_n\}_{n\in\N}$ and $\mu$ in $\Pcal_r(X)$ have $\mu_n\to \mu$ if and only if we have $\int_{X}f\diff \mu_n\to \int_{X}f\diff\mu$ for all bounded continuous $f:(X,d)\to \R$ and $\int_{X}d^{r}(x,y)\diff \mu_n(y)\to \int_{X}d^{r}(x,y)\diff \mu(y)$ for some (equivalently, all) $x\in X$ as $n\to\infty$.
	
	Our main result (Theorem~\ref{thm:main}) concerns fixed $p\ge 1$ and a separable metric space $(X,d)$ admitting a weak convergence, and it states the following:
	If $\{\mu_n\}_{n\in\N}$ and $\mu$ are any probability measures in $\mathcal{P}_{p-1}(X)$ satisfying $\mu_n\to\mu$ in $\weak^{p-1}$, then every sequence $\{x_n\}_{n\in\N}$ with $x_n\in M_p(\mu_n)$ for each $n\in\N$ must be relatively $d$-compact and any subsequential limit thereof must lie in $M_p(\mu)$.
	This is the so-called $\Gamma$-convergence of $M_p(\mu_n)$ to $M_p(\mu)$, but it also has several more interpretable consequences.
	For example, it implies (Corollary~\ref{cor:main-cpt}) that $M_p(\mu)$ is a non-empty $d$-compact set for each $\mu\in\Pcal_{p-1}(X)$, and also (Corollary~\ref{cor:main-cty}) that $\mu_n\to\mu$ in $\weak^{p-1}$ implies
	\begin{equation}\label{eqn:main-cvg}
		\max_{x_n\in \obj_p(\mu_n)}\min_{x\in \obj_p(\mu)}d(x_n,x) \to 0
	\end{equation}
	as $n\to\infty$.
	
	Two remarks about the main theorem are due.
	First, we emphasize that the weak convergence appears in this result only in the hypotheses; the Fr\'echet means are computed with respect to the metric $d$, and all compactness and convergence is shown to hold with respect to the metric $d$;
	in this sense, we say that the existence of a weak convergence is actually a geometric condition on $(X,d)$ under which its Fr\'echet means are well-behaved.
	Second, we emphasize that this result is purely analytic; while we will later specialize to the case that $\{\mu_n\}_{n\in\N}$ are the empirical measures of some data points, we have actually shown a more general statement about Fr\'echet means of any convergent sequence of measures, and this generality will be exploited later.
	
	The proof of this main theorem, as discussed above, is precisely to execute the steps (S0), (S1), (S2), and (S3) above.
	In fact, we show that step (S0) holds in an arbitrary metric space, and each that step (S$i$) is proven with the help of condition (W$i$), for each $i\in\{1,2,3\}$.
	As such, one can dispense with condition (W3) if only non-emptiness and weak convergence of the empirical Fr\'echet means is desired (Remark~\ref{rem:thm-without-W3}), although we find this to be an insufficient conclusion for most applications.
	
	\subsubsection*{Probabilistic Consequences (Section~\ref{sec:prob})}
	
	The final part of the paper shows, plainly, that the main theorem yields a great deal asymptotic theory for Fr\'echet means.
	Throughout this part, suppose that $(X,d)$ is a metric space admitting a weak convergence.
	
	The simplest and most fundamental result (Corollary~\ref{cor:SLLN}) is the strong law of large numbers (SLLN) which states that, if $Y_1,Y_2,\ldots$ is an independent, identically-distributed (IID) sequence of $X$-valued random variables satisfying $\E[d^{p-1}(x,Y_1)]<\infty$ for some (equivalently, all) $x\in X$, then we have
	\begin{equation*}
		\max_{\bar x_n\in \obj_p(\bar \mu_n)}\min_{x\in \obj_p(\mu)}d(\bar x_n,x) \to 0
	\end{equation*}
	almost surely, where $\bar \mu_n:=\frac{1}{n}\sum_{i=1}^{n}\delta_{Y_i}$ is the empirical measure of the first $n\in\N$ points; this shows the desired SLLN, whereby $p-1$ finite moments suffices for the convergence of $M_p$.
	For $p=2$ this proves that 1 finite moment suffices for the convergence of $M_2$, and for $p=1$ this proves that no integrability at all is required in order to guarantee the convergence of $M_1$.
	
	The SLLN provides basic consistency for Fr\'echet means used in applications of non-Euclidean statistics, and our result has several consequences that we now explain.
	First, there are many applied and computational uses of Fr\'echet means in statistical shape analysis  \cite{Kurtek, Sundaramoorthi, SrivastavaKlassen,MillerYounes}, although no SLLN was previously known in the infinite-dimensional examples of interest; our results thus provide novel guarantees for the rotationally-invariant Wasserstein space \cite{WassersteinTomography,WassersteinAlignment,ProcrustesWasserstein}, spaces of images deformed by rotations \cite{MillerYounes,ShapeSpaces}, and the Sobolev metric on the space of unparameterized unregistered planar loops \cite{MichorMumford, TumpachPreston, BruverisMichorMumford}.
	Second, there are many applications where the SLLN was already known but where our result sharpens the moment condition to the minimal possible, e.g., uniformly convex Banach spaces \cite{CuestaMatran}, the Wasserstein space \cite{WassersteinBarycenter}, the space of persistence diagrams endowed with the partial matching metric \cite{DivolLacombe}, and the Bures-Wasserstein metric on a space of infinite-dimensional covariance operators \cite{PanaretosSantoroBW}.
	Lastly, there are some cases where the SLLN was already known under the minimal condition, and our results recovers these (in addition to establishing new asymptotic theory below), e.g., Heine-Borel spaces \cite{Schoetz2} and Hadamard spaces \cite{SturmNPC}.
	
	More sophisticated probabilistic results involve almost sure convergence for much wider probabilistic settings.
	For instance, we have (Corollary~\ref{cor:ergodic}) a pointwise ergodic theorem for certain amenable groups actions.
	That is, suppose that $G$ is a locally compact amenable group acting in a measure-preserving way on a probability space $(\Omega,\F,\P)$; if $Y:\Omega\to X$ is any map whose law is in $\Pcal_{p-1}(X)$, then we can show that the Fr\'echet mean of the empirical measures of $Y$, averaged along its orbits truncated via a suitable F{\o}lner sequence, converge almost everywhere to an invariant random set.
	Along the same lines, one can prove almost sure convergence for sequences of data which come from exchangeable sequences, Harris-recurrent Markov chains, 1-dependent sequences, and more.
	
	In the setting of ergodic theorems, the minimality of our moment assumptions deserves further discussion.
	The pointwise ergodic theorem for Fr\'echet 2-means (barycenters) in Hadamard spaces was initially studied in \cite{Austin} under the sub-optimal assumption of 2 finite moments, and it was asked whether one can prove the same result under the assumption of only 1 finite moment (even in the setting of $G=\mathbb{Z}$).
	The work \cite{Navas} (see also \cite{EsSahibHeinich}) provides one solution to this problem, but not regarding the Fr\'echet mean in \eqref{eqn:obj-renorm}; rather, it introduces an alternative notion which is more amenable to the analysis of \cite{Austin}.
	In contrast, our work answers this question affirmatively for the canonical Fr\'echet mean in \eqref{eqn:obj-renorm}, for all $p$ and for all metric spaces admitting a weak convergence.
	
	Another probabilistic result (Corollary~\ref{cor:LDP}) establishes the large deviations principle (LDP); that is, if $Y_1,Y_2,\ldots$ is an IID sequence of $X$-valued random variables satisfying $\E[\exp(\lambda d^{p-1}(x,Y_1))]<\infty$ for all $\lambda>0$ and some (equivalently, all) $x\in X$, and assuming that $M_p(\bar \mu_n)$ is unique almost surely for each $n\in\N$, we have
	\begin{align*}
		-\inf\{I_{p,\mu}(x): x\in A^{\circ}\}
		&\le \liminf_{n\to\infty}\frac{1}{n}\log \P(M_p(\bar \mu_n)\in A) \\
		&\le \limsup_{n\to\infty}\frac{1}{n}\log \P(M_p(\bar \mu_n)\in A) \le -\inf\{I_{p,\mu}(x): x\in \bar A\}
	\end{align*}
	for all measurable $A\subseteq X$ and for a suitable function $I_{p,\mu}:X\to \R$.
	In some sense, the LDP provides a general-purpose method for analyzing the (asymptotic) concentration of measure phenomenon for Fr\'echet means.
	This extends the results in \cite{JaffeSantoroLDP}, although the rate function $I_{p,\mu}$ above is given in the form of a relative entropy projection while \cite{JaffeSantoroLDP} develops a more concrete form of the LDP using further tools from convex analysis.
	
	\subsection{Related Literature}
	
	There is a wide, and somewhat disparate, literature on asymptotic theory for Fr\'echet means, so we take some time to explain the literature in order to appropriately situate our results.
	
	The story of the SLLN begins with Ziezold \cite{Ziezold} who established a consistency result in a general separable metric space under $p$ finite moments which states that we have
	\begin{equation*}
		\bigcap_{n\in\N}\overline{\bigcup_{m\ge n}M_p(\bar\mu_m)} \subseteq M_p(\mu)
	\end{equation*}
	almost surely; this notion of convergence is sometimes now referred to as \textit{Ziezold consistency}.
	While this result is extremely general, the notion of convergence is quite weak; it only guarantees that, if $\{M_p(\bar\mu_n)\}_{n\in\N}$ has any accumulation point, then it must lie in $M_p(\mu)$, but it does not guarantee that any accumulation point exists.
	Parallel work by Sverdrup-Thygesen \cite{SverdrupThygeson} restricts to compact metric spaces and shows
	\begin{equation*}
		\max_{\bar x_n\in \obj_p(\bar \mu_n)}\min_{x\in \obj_p(\mu)}d(\bar x_n,x) \to 0
	\end{equation*}
	almost surely, which is the same notion of convergence we study in Corollary~\ref{cor:main-cty}; this notion of convergence was popularized in the special case of Riemannian manifolds in the work of Bhattacharya and Patrangenaru \cite{ManifoldsI,ManifoldsII} and is now sometimes referred to as \textit{Bhattacharya-Patrangenaru consistency}.
	The recent work \cite{EvansJaffeFrechet} extended Ziezold's result to the case of $p-1$ finite moments in a general separable metric space, and the work \cite{Schoetz2} extended Sverdrup-Thygesen's result to the case of $p-1$ finite moments in a general Heine-Borel metric space; strictly speaking, \cite{Schoetz2} gives the result under a weaker ``sample-Heine-Borel'' property, but explains that establishing sample-Heine-Borel in a metric space which is not itself Heine-Borel ``may be of similar difficulty as showing convergence of Fr\'echet means directly''.
	
	While the previous discussion was limited to the case of the SLLN for Fr\'echet means in abstract metric spaces, there is a parallel literature on the SLLN in particular metric spaces of interest.
	Examples include Riemannian manifolds \cite{ManifoldsI}, stratified spaces \cite{Huckemann, UnlabeledGraphs, LeKume}, some Banach spaces \cite{Lember,CuestaMatran}, Hadamard spaces \cite{SturmNPC}, the Wasserstein and Bures-Wasserstein spaces \cite{WassersteinBarycenter, PanaretosSantoroBW}, and spaces of persistence diagrams \cite{DivolLacombe}.
	We emphasize that these results are proven using a variety of methods, but that our main theorem allows us to recover all of them on a clean basis, and in all cases to sharpen the requisite moment condition.
	
	Another branch of literature studies SLLN for generalizations of Fr\'echet means, in a few different directions.
	For example, \cite{Schoetz2} in fact proves the SLLN for minimizers of $n^{-1}\sum_{i=1}^{n}\tau(d(x,Y_i))$ for a general class of functions $\tau:[0,\infty)\to [0,\infty)$ including our $\tau(u)=u^p$ for $p\ge 1$.
	Another example is that $d$ does not need a bona fide metric on a set $X$, but rather can be any loss function between two spaces, provided that certain axioms are satisfied; consistency results in this setting are taken up in \cite{Huckemann2015} with motivation from geodesic PCA.
	We focused on the case of Fr\'echet $p$-means for simplicity in this work so our results do not directly imply anything about these generalized settings, but we expect our methods will indeed be useful in such problems in the future.
	
	We regard the calculus of Section~\ref{subsec:closure} to be in the spirit of Grenander's famed pattern theory \cite{MumfordPattern, MumfordPatternTheory}, since it gives easy-to-verify conditions under which one can build new metric spaces from simpler ones, in which the general asymptotic theory holds.
	As illustrated in Example~\ref{ex:loops} on the Sobolev metric on unregistered unparameterized planar loops, this is particular interesting from the point of view of statistical shape analysis, since there is a huge variety of metrics of interest \cite{SrivastavaKlassen, Kurtek, Sundaramoorthi, MichorMumford} each with their own advantages and disadvantages, from a pratical point of view.
	
	We also emphasize that our results make no assumptions about the uniqueness of Fr\'echet means.
	While previous literature has often restricted attention to the case of a unique Fr\'echet mean \cite{ManifoldsI, ManifoldsII, FrechetRegression, DepthProfile}, recent works have attempted to go beyond this; some authors provide simple assumptions under which one can deduce that the Fr\'echet mean is unique \cite{Afsari, FrechetCircle, PersistenceUniqueness, SturmNPC, LeKume}, and others embrace the generality of the set-valued setting \cite{EvansJaffeFrechet, Schoetz2, Huckemann, WassersteinBarycenter}.
	The results herein, which are stronger than all known results even in the uniqueness setting, are completely valid in the set-valued setting, and they provide a sort of uniform convergence over all elements of the corresponding Fr\'echet mean sets.
	
	The question of whether our results recover some form of central limit theorem (CLT) for Fr\'echet means is more subtle.
	The CLT has indeed been established in some metric spaces, like Riemannian manifolds \cite{ManifoldsII}, some stratified spaces \cite{BookCLTs}, and the Wasserstein and Bures-Wasserstein spaces \cite{Kroshnin, agueh2017vers}.
	However, even the statement of such results requires some differentiable structure; the population Fr\'echet mean is supposed to admit a linear tangent space, and the CLT guarantees convergence in distribution of the rescaled empirical Fr\'echet mean (to a suitable Gaussian measure) in this tangent space.
	In recent work \cite{TreeCLT,MMT_I,MMT_II,MMT_III,MMT_IV}, it has been established that qualitatively different behavior may occur when the requisite tangent space degenerates.
	Thus, it not possible to state (let alone prove) a general CLT for Fr\'echet means without further geometric restriction.
	Nevertheless, we emphasize that most CLTs are proven by first using an SLLN to show that the empirical Fr\'echet mean is close to the population Fr\'echet mean, and second by applying some Taylor expansion to the Fr\'echet functional in a suitable sense; thus, our results still help to complete the first step of this program.
	
	Our results also do not recover existing theory on finite-sample rates of convergence for Fr\'echet means and related objects; indeed, this is why we draw the distinction between ``asymptotic'' and ``non-asymptotic'' results.
	For example, explicit rates of convergence are known in Hadamard spaces \cite{Schoetz1} and Wasserstein and Bures-Wasserstein spaces \cite{AGP,WassersteinConvergence,Kroshnin}, but these results rely on much stronger inputs, both in terms of the geometry of the underlying metric space and in terms of the assumptions on the population distribution and its Fr\'echet mean.

	As inspiration for future work, let us mention a few settings in which we believe it would be tractable and interesting to verify the existence of a weak convergence.
	First is the setting of Banach manifolds modeled on uniformly convex Banach spaces; this would be especially interesting because such spaces are fundamental to theoretical framework of shape analysis \cite{BruverisMichorMumford}.
	Second is a general setting of metrics defined via a ``dynamical formulation'', including the metric arising in large deformation diffeomorphic metric mapping (LDDMM) \cite{YounesShapes}, the Wasserstein-Fisher-Rao metric \cite{WFR}, and more; notice that we already established such a result in our study of the Wasserstein space, but that we only used the static formulation in the present work.
	Third is the particular example of the invariant $L^2$-metric on the space of graphons \cite[Section~14.2]{Lovasz}, which arises as the natural continuum limit of the metric which has previously been studied in non-Euclidean approaches to statistics on spaces of unlabeled graphs \cite{UnlabeledGraphs}; our existing  result on quotient metric spaces (Proposition~\ref{prop:quotient}) does not directly apply in this case, since we now have a quotient of an infinite-dimensional metric space by an infinite-dimensional isometric group action.
	
	Lastly, let us mention that our notion of weak convergence in Definition~\ref{def:weak-like} may be useful in other applications of analysis in metric spaces, beyond the theory of Fr\'echet means.
	For example, similar (but slightly different) conditions have been used in the theory of gradient flows in metric spaces \cite[Section~2.1]{AmbrosioGigliSavare} and in the study of fixed-point theorems in metric spaces \cite{KirkPanyanak, EspinolaFernandezLeon, Lim}.
	
	\section{Weak Convergence}\label{sec:weak}
	
	This section contains a careful study of a notion of weak convergence in metric spaces, as this will be the main analytic tool used in proving our main results.
	More specifically, we introduce this notion of weak convergence (Subsection~\ref{subsec:def}), we give some basic examples (Subsection~\ref{subsec:ex}), we establish some useful closure properties (Subsection~\ref{subsec:closure}), and then we give a few further examples of a more complicated nature (Subsection~\ref{subsec:further}).
	
	\subsection{Definitions}\label{subsec:def}
	
	First we introduce some precise notions that we will need throughout the section.
	By a \textit{convergence} $c$ on $X$ we simply mean a way to assign at most one value of $X$ to each sequence in $X$, with the additional property that this value is preserved under taking subsequences; if a convergence $c$ assigns value $x$ to a sequence $\{x_n\}_{n\in\N}$, then we write \textit{$x_n\to x$ in $c$}.
	(Note that some authors may refer to this as a \textit{Hausdorff} or $T_2$ convergence, since we are, roughly speaking, requiring that limits are uniquely-defined.)
	
	Specifically, we are interested in convergences with the following structure:
	
	\begin{definition}\label{def:weak-like}
		For a metric space $(X,d)$, a convergence $w$ on $X$ is called a \textit{weak convergence for $(X,d)$} if it satisfies the following conditions:
		\begin{enumerate}
			\item[(W1)] If $\{x_n\}_{n\in\N}$ and $y$ in $X$ satisfy $\sup_{n\in\N}d(x_n,y)<\infty$, then there exists a subsequence $\{n_k\}_{k\in\N}$ and a point $x\in X$ satisfying $x_{n_k}\to x$ in $w$.
			\item[(W2)] If $\{x_n\}_{n\in\N}$ and $x$ in $X$ satisfy $x_n\to x$ in $w$, then for all $y\in X$ we have $d(x,y)\le \liminf_{n\in\N}d(x_n,y)$.
			\item[(W3)] If $\{x_n\}_{n\in\N}$ and $x$ in $X$ satisfy $x_n\to x$ in $w$ and satisfy $d(x_n,y)\to d(x,y)$ for some $y\in X$, then we have $x_n\to x$ in $d$.
		\end{enumerate}
		We say that $(X,d)$ \textit{admits a weak convergence} if there exists a weak convergence $w$ for $(X,d)$.
	\end{definition}
	
	The upshot of this work is that the existence of weak convergence is actually a geometric property of a metric space which ensures that its Fr\'echet means are well-behaved.
	We note that conditions similar to (but slightly different than) the above have appeared in previous works on gradient flows in metric spaces \cite[Section~2.1]{AmbrosioGigliSavare} and $k$-means clustering in certain Banach spaces \cite{Thorpe}.
	Before we get into the details of the main results, let us give some remarks about Definition~\ref{def:weak-like}.
	Throughout these remarks, let $(X,d)$ denote a metric space and $w$ a weak convergence for $(X,d)$.
	
	First, we note that the term ``weak'' is meant to be suggestive of the fact that there exist many examples of weak convergences connected to the usual notions of weak topology in functional analysis and in probability.
	While we will explain this in detail in Subsection~\ref{subsec:ex}, the reader can see Table~\ref{tab:weak-like} for some concrete examples: Heine-Borel metric spaces with their metric convergence, uniformly convex Banach spaces with their weak topology, Hadamard spaces with Jost's weak convergence, and the Wasserstein space with the topology of weak convergence of measures.
	
	Second, we note that aspects of topologies related to $w$ can be subtle.
	For example, if $(X,d)$ is a Hadamard space and $w$ is Jost's weak convergence, then the existence of a topology on $X$ corresponding to convergence in $w$ depends intricately on whether one considers bounded or unbounded sets, sequences or nets, and more \cite{LytchakPetrunin, WeakUnbddNet}.
	Another example the simple case where $(X,d)$ is a uniformly convex Banach space and $w$ is its usual weak convergence, in which case there exists a topology corresponding to convergence in $w$ but this topology is neither metrizable nor second-countable.
	Because our probabilistic motivations lead us to focus only on sequences, we will not pursue in this work the question of whether a weak convergence necessarily corresponds to convergence in some topology; however, we believe that such questions would be interesting for future work.
	
	Third, notice that we did not explicitly require in Definition~\ref{def:weak-like} that $w$ should be ``weaker than'' $d$ in any sense.
	This is indeed true, and we now show that it follows from the other assumptions.
	However, notice that, since $w$ may not correspond to convergence in a topology, the statement takes on a slightly different form than expected.
	
	\begin{lemma}
		If $w$ is a weak convergence for a metric space $(X,d)$ and if $\{x_n\}_{n\in\N}$ and $x$ in $X$ have $x_n\to x$ in $d$, then every subsequence of $\{x_n\}_{n\in\N}$ admits a further subsequence converging in $w$ to $x$.
	\end{lemma}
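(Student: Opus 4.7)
The plan is the textbook ``subsequence of subsequence'' argument: extract a $w$-convergent sub-subsequence using the compactness axiom (W1), and then identify the limit as $x$ using the lower-semicontinuity axiom (W2) with the test point $y=x$.

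In detail, fix any subsequence $\{x_{m_j}\}_{j\in\N}$ of $\{x_n\}_{n\in\N}$. Since $x_n\to x$ in $d$, the sequence of real numbers $\{d(x_n,x)\}_{n\in\N}$ is bounded, and so in particular $\sup_{j\in\N}d(x_{m_j},x)<\infty$. Applying (W1) with $y=x$ to the sequence $\{x_{m_j}\}_{j\in\N}$ yields a further subsequence $\{x_{m_{j_k}}\}_{k\in\N}$ and a point $x'\in X$ with $x_{m_{j_k}}\to x'$ in $w$. By (W2), applied with the same $y=x$, we have
\[
	d(x',x) \le \liminf_{k\to\infty} d(x_{m_{j_k}},x) = 0,
\]
where the equality uses that $x_n\to x$ in $d$. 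Hence $x'=x$, which is exactly the claim.

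The argument is essentially routine; no step poses a real obstacle. The only conceptual point worth emphasizing is why one must phrase the conclusion in the subsequence-of-subsequence form rather than asserting that $x_n\to x$ in $w$ directly. This is because $w$ need not correspond to convergence in a topology, so the standard equivalence ``$x_n\to x$ if and only if every subsequence has a further subsequence converging to $x$'' is not automatic; the lemma records precisely the direction that does hold in this purely sequential framework.
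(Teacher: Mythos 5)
Your proof is correct and follows essentially the same route as the paper's: extract a $w$-convergent sub-subsequence via (W1) using the $d$-boundedness of a convergent sequence, then identify the limit as $x$ by applying (W2) with test point $x$. No issues.
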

	
	\begin{proof}
		Let $\{n_k\}_{k\in\N}$ be an arbitrary subsequence.
		By assumption, the sequence $\{x_{n_k}\}_{k\in\N}$ converges in $d$ and is hence $d$-bounded.
		Thus, (W1) guarantees the existence of $\{k_j\}_{j\in\N}$ and $x'\in X$ such that we have $x_{n_{k_j}}\to x'$ in $w$.
		Now observe
		\begin{equation*}
			d(x',x) \le \liminf_{j\to\infty}d(x_{n_{k_j}},x) =  0
		\end{equation*}
		by (W2), which implies $x=x'$.
		We have thus shown $x_{n_{k_j}}\to x$ in $w$, as needed.
	\end{proof}

	\subsection{Basic Examples}\label{subsec:ex}
	
	Next, we give a few example of metric spaces for which we can directly verify the existence of a weak convergence, either by-hand or by referencing known results in the literature.
	A summary of these results can be found in Table~\ref{tab:weak-like}.

	\begin{table}
		\tiny
		\centering
		\begin{tabular}{|c|c|c|c|c|}
			\hline
			\shortstack{\\metric space\\\,} & \shortstack{\\weak convergence\\\,} & \shortstack{\\(W1)\\\,} & \shortstack{\\(W2)\\\,} & \shortstack{\\(W3)\\\,} \\
			\hline
			\textbf{\shortstack{Heine-Borel space\\ \,}} & \shortstack{\\metric convergence\\\,} & \shortstack{\\definition\\\,} & \shortstack{\\trivial\\\,}& \shortstack{\\trivial\\\,} \\
			\hline
			\textbf{\shortstack{uniformly convex\\Banach space}} & \shortstack{weak topology\\\,} & \shortstack{\\Milman-Pettis\\theorem} & \shortstack{weak lower semi-\\continuity of norm} & \shortstack{Kadec-Klee\\ property} \\
			\hline
			\textbf{\shortstack{Hadamard space\\ \,}} & \shortstack{Jost's convergence\\\,} & \shortstack{\\Jost's Banach-\\Alaoglu theorem} & \shortstack{weak lower semi-\\continuity of metric} & \shortstack{Kadec-Klee\\ property} \\
			\hline
			\textbf{\shortstack{Wasserstein space\\ \,}} & \shortstack{\\weak convergence of\\probability measures} & \shortstack{Prokhorov\\theorem} & \shortstack{Fatou lemma\\\,} & \shortstack{definition\\\,} \\
			\hline
		\end{tabular}
		\bigskip
		\caption{The basic examples of weak convergences.
			While the full details are given Subsection~\ref{subsec:ex}, we observe here that the axioms of weak convergence correspond, roughly speaking, to well-known notions from functional analysis, metric geometry, and probability theory.}
		\label{tab:weak-like}
	\end{table}

	The first setting is Heine-Borel metric spaces which, we emphasize, is the entire scope of most previous works in the abstract setting.
	
	\begin{example}[Heine-Borel space]\label{ex:HB}
		Suppose $(X,d)$ has the property that the closed balls $\bar B_r(x):=\{y\in X: d(x,y)\le r\}$ are compact for all $x\in X$ and $r\ge 0$.
		Then, it is easy to see that convergence with respect to $d$ is itself a weak-convergence for $(X,d)$.
		(In fact, the metric convergence is a weak convergence if and only if $(X,d)$ is a Heine-Borel metric space.)
	\end{example}
	
	Second is the setting of uniformly convex Banach spaces, in which Fr\'echet means have been studied from the point of view of functional data analysis, approximation theory, and $k$-means clustering.
	We note that many classical examples of Banach spaces are uniformly convex.
	For example, all $L^q$ spaces with $1<q<\infty$ over a $\sigma$-finite measure space \cite[Theorem~5.2.11]{Megginson} and all Sobolev spaces $H^{k,q}(D)$ for $k\ge 0$, $1<q<\infty$, and a bounded open domain $D\subseteq\R^m$ \cite[Proposition~8.1]{Brezis}.
	Additionally, there are some known conditions on an Orlicz function $\Phi:[0,\infty)\to[0,\infty)$ which imply that the Orlicz space $L_{\Phi}$ is uniformly convex \cite{Kaminska}.

	\begin{example}[uniformly convex Banach space]\label{ex:unif-cvx-Banach}
		Suppose $(X,\|\cdot\|)$ is a uniformly convex Banach space with $d$ the metric induced by the norm, and let $w$ denote the usual notion of weak convergence in $(X,\|\cdot\|)$.
		We claim $w$ is indeed a weak convergence for $(X,d)$ in the sense of Definition~\ref{def:weak-like}.
		To see (W1), recall by the Milman-Pettis theorem \cite[Theorem~5.2.15]{Megginson} that $(X,\|\cdot\|)$ must be reflexive and it is well-known that a Banach space is reflexive if and only if its norm-closed balls are weakly compact.
		For (W2), simply recall that norms are always weakly lower semi-continuous.
		Lastly, note that (W3) is called the \textit{Kadec-Klee property}, the \textit{Radon-Riesz property}, or \textit{property (H)} in this Banach space setting, and it is known \cite[Theorem~5.2.18]{Megginson} that uniformly convex Banach spaces have this property.
	\end{example}
	
	Third is the setting of metric spaces whose curvature (in the sense of Alexandrov) is everywhere non-positive.
	Such spaces have been an important element of non-Euclidean statistics since \cite{SturmNPC}, and examples include Hilbert spaces, complete simply-connected Riemannian manifolds of non-positive sectional curvature, metric trees, the so-called \textit{Billera-Holmes-Vogtmann (BHV) treespace} studied in computational phylogenetics, and more \cite{SturmNPC, BacakOptimization, BHV}.
	
	\begin{example}[Hadamard space]\label{ex:hadamard}
		Suppose $(X,d)$ is a (complete) geodesic metric space with the property that all geodesic triangles are ``at least as thin as'' the corresponding geodesic triangles in the Euclidean plane; such spaces are called \textit{Hadamard spaces}, \textit{non-positively curved (NPC) spaces}, or \textit{CAT(0) spaces}.
		In particular, we emphasize that $(X,d)$ is not assumed to be locally compact.
		
		The notion of weak convergence in Hadamard spaces is usually attributed to Jost in \cite[Definition~2.7]{JostWeak}, but it is known to be closely related to earlier work on metric fixed-point theory \cite{KirkPanyanak, Lim}; see \cite[Proposition~3.1.3]{BacakOptimization}.
		We define it as follows:
		Let us say that $\{x_n\}_{n\in\N}$ and $x$ in $X$ satisfies $x_n\to x$ in $w$ if for every geodesic ray $\gamma$ emanating from $x$, we have $p_{\gamma}(x_n)\to p_{\gamma}(x)$ as $n\to\infty$, where $p_{\gamma}:X\to\gamma$ denotes the metric projection onto $\gamma$.
		
		We can collect some existing results in the literature to see that $w$ is indeed a weak convergence in the sense of Definition~\ref{def:weak-like}:
		Condition (W1) is proven in \cite[Theorem~2.1]{JostWeak} and \cite[Proposition~3.1.2]{BacakOptimization} which is Jost's generalization of the Banach-Alaoglu theorem to Hadamard spaces.
		Moreover, condition (W2) is proven in \cite[Corollary~3.2.4]{BacakOptimization}, and condition (W3) is proven in \cite[Proposition~3.1.6]{BacakOptimization}.
	\end{example}

	More generally than CAT(0) spaces, one has a notion of CAT($\kappa$) spaces for $\kappa\in\R$, which are, roughly speaking, metric spaces with curvature bounded above by $\kappa$.
	Some known results suggest that Jost's convergence (or an analog of it) may provide a suitable notion of weak convergence in CAT($\kappa$) spaces of sufficiently small diameter \cite{EspinolaFernandezLeon, Kell}, but more work is needed in order to carefully verify the conditions (W1), (W2), and (W3) above.
	
	Lastly, we consider the setting of the Wasserstein space in which Fr\'echet means (usually called \textit{barycenters}) have attracted much recent attention \cite{WassersteinBarycenter, AguehCarlier, PanaretosSantoroBW, JaffeSantoroLDP}.
	
	\begin{example}[Wasserstein space]\label{ex:wass}
		Let $(S,\rho)$ denote a complete, separable, and locally compact metric space.
		For $1\le q<\infty$ let us write $\Pcal_q(S)$ for the space of probability measures $P$ on $S$ with $\int_{S}\rho^q(s,t)\diff P(s)<\infty$ for some (equivalently, all) $t\in S$, and let us write $W_q$ for the $q$-Wasserstein metric on $\Pcal_q(S)$.
		Our interest lies in the metric space $(\Pcal_q(S),W_q)$.
		
		We claim that the topology of weak convergence of measures, denoted $\weak$ gives rise to a weak convergence for $(\Pcal_q(S),W_q)$ in the sense of Definition~\ref{def:weak-like}.
		Recall that $\{P_n\}_{n\in\N}$ and $P$ in $\Pcal_q(S)$ are said to satisfy $P_n\to P$ in $\weak$ when we have $\int_{S} f \diff P_n\to \int_{S} f \diff P$ for all bounded continuous functions $f:(S,\rho)\to \R$.
		
		Indeed, let us show that conditions (W1), (W2), and (W3) are satisfied.
		For (W1), suppose that $\{P_n\}_{n\in\N}$ and $Q$ in $W_q(S)$ satisfy
		\begin{equation}\label{eqn:wass-1}
			\sup_{n\in\N}W_q(P_n,Q)<\infty,
		\end{equation}
		and use the local compactness of $S$ to get a sequence $K_1\subseteq K_2\subseteq \cdots$ of compact subsets of $S$ satisfying $\bigcup_{\ell\in\N}K_{\ell} = S$.
		Now fix $t\in K_1$, and note that \eqref{eqn:wass-1} and the triangle inequality imply
		\begin{equation*}
			C:=\sup_{n\in\N}\int_{S}\rho^{q}(s,t)\diff P_n(s) < \infty.
		\end{equation*}
		Also, for each $\ell\in\N$ we have
		\begin{equation*}
			\ind_{S\setminus K_{\ell}} \le \frac{\rho(\,\cdot\,,t)}{\inf_{s\notin K_{\ell}}\rho(s,t)},
		\end{equation*}
		so taking the $q$th power and integrating with respect to $P_n$ gives
		\begin{equation}\label{eqn:wass-2}
			P_n(S\setminus K_{\ell}) \le \frac{C}{\inf_{s\notin K_{\ell}}\rho^{q}(s,t)}.
		\end{equation}
		Since we have $\inf_{s\notin K_{\ell}}\rho(s,t) \to \infty$ as $\ell\to\infty$, equation~\eqref{eqn:wass-2} shows that $\{P_n\}_{n\in\N}$ is tight.
		Thus, the Prokhorov theorem shows that $\{P_n\}_{n\in\N}$ admits a subsequence converging in $\weak$, and this is exactly (W1).
		Also note that (W2) is proven in \cite[Proposition~7.1.3]{AmbrosioGigliSavare}, which essentially consists of Fatou's lemma and an approximation argument.
		Lastly, note that (W3) is proven in \cite[Lemma~14]{WassersteinBarycenter}; this is similar to the definition of Wasserstein convergence (weak convergence plus convergence of $q$th moment), although the proof is a bit more complicated.
	\end{example}
	
	\subsection{Closure Properties}\label{subsec:closure}
	
	We now give various closure properties for the class of metric spaces admitting a weak convergence.
	A summary of these results can be found in Table~\ref{tab:weak-closure}.
	
	\begin{table}
		\tiny
		\centering
		\begin{tabular}{|c|c|c|c|}
			\hline
			\shortstack{\\metric space\\operation} & \shortstack{\\inputs\\\,}  & \shortstack{\\output\\\,} & \shortstack{\\weak convergence\\\,} \\
			\hline
			\textbf{\shortstack{restriction\\\,}} & \shortstack{\\$(X,d)$ with weak convergence $w$,\\$w$-closed subset $X'\subseteq X$\\\,} & \shortstack{$(X',d)$\\\,} & \shortstack{$w$\\\,} \\
			\hline
			\textbf{\shortstack{product\\\,}} & \shortstack{\\$(X_1,d_1)$ with weak convergence $w_1$,\\$(X_2,d_2)$ with weak convergence $w_2$\\\,} & \shortstack{$(X_1\times X_2,d_1\otimes_{q}d_2)$\\\,} & \shortstack{$w_1\otimes w_2$\\\,} \\
			\hline
			\textbf{\shortstack{\\quotient\\\,}} & \shortstack{\\$(X,d)$ with weak convergence $w$,\\compact group $G$ acting isometrically\\\,} & \shortstack{$(X/G,d_{X/G})$\\\,} & \shortstack{\\$\Gamma$-convergence of\\orbits under $w$\\\,} \\
			\hline
			\textbf{\shortstack{\\deformation\\\,}} & \shortstack{\\$(X,d)$ with weak convergence $w$,\\proper metric group $(G,\rho)$ acting isometrically\,}& \shortstack{\\$(X,d_{G,\rho})$\\\,} & \shortstack{\\$w$\\\,} \\
			\hline
		\end{tabular}
		\bigskip
		\caption{The closure properties for weak convergences.
			While the full details are given Subsection~\ref{subsec:closure}, we observe here that many operations on metric spaces preserve the existence of a weak convergence.}		
		\label{tab:weak-closure}
	\end{table}
	
	The first result concerns subspace restriction, and involves an additional definition.
	If $c$ is a convergence on a set $X$, let us say that a subset $X'\subseteq X$ is \textit{$c$-closed} if $\{x_n\}_{n\in\N}$ in $X'$ and $x\in X$ satisfying $x_n\to x$ in $c$ implies $x\in X'$.
	Note that this terminology is slightly misleading in two senses: First, $c$ need not correspond to convergence in a topology; second, we are requiring that $X'$ contain all of the limits of sequences in $X'$ but not necessarily of nets in $X'$.
	Nonetheless, we believe this abuse of terminology will cause no confusion.
	
	\begin{proposition}\label{prop:restriction}
		Consider any metric space $(X,d)$ admitting a weak convergence $w$, and any $w$-closed subset $X'\subseteq X$.
		Then, $w$ is a weak convergence for $(X',d)$ .
	\end{proposition}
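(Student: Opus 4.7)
The plan is to simply verify the three axioms (W1), (W2), (W3) of Definition~\ref{def:weak-like} for the restricted convergence, using the fact that the same axioms hold on the ambient space $(X,d)$, with $w$-closedness of $X'$ used precisely to guarantee that subsequential weak limits remain inside $X'$.

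First I would verify (W1). Given $\{x_n\}_{n\in\N}$ in $X'$ and $y\in X'$ with $\sup_{n\in\N}d(x_n,y)<\infty$, I would apply (W1) for $(X,d)$ to extract a subsequence $\{n_k\}_{k\in\N}$ and a point $x\in X$ with $x_{n_k}\to x$ in $w$. The key point is then to invoke the $w$-closedness of $X'$ to conclude $x\in X'$, which is exactly what makes this a legitimate weak convergence on the subspace and not just on $X$. This is the only step where the $w$-closedness assumption plays a role, and it is the (modest) main point of the proposition.

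Next, (W2) and (W3) transfer essentially verbatim. For (W2), if $\{x_n\}_{n\in\N}$ in $X'$ and $x\in X'$ satisfy $x_n\to x$ in $w$, then for any $y\in X'\subseteq X$ the ambient inequality $d(x,y)\le\liminf_{n\to\infty}d(x_n,y)$ from (W2) for $(X,d)$ immediately yields the same inequality inside $X'$. For (W3), if additionally $d(x_n,y)\to d(x,y)$ for some $y\in X'\subseteq X$, then (W3) for $(X,d)$ gives $x_n\to x$ in $d$, which is the same statement inside $X'$ since the metric on $X'$ is the restriction of $d$.

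I do not expect any real obstacle: the statement is essentially a sanity check that weak convergences pull back to $w$-closed subspaces. The only subtlety worth flagging in the write-up is why $w$-closedness (as defined sequentially in the text, rather than via nets) suffices — namely, because (W1) only ever produces sequential weak limits, so the sequential closure hypothesis is exactly what is needed to keep those limits inside $X'$.
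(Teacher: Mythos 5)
Your proposal is correct and matches the paper's own proof: the paper likewise observes that (W2) and (W3) restrict trivially, and that the only content is (W1), where the ambient (W1) produces a weak subsequential limit in $X$ and the $w$-closedness of $X'$ places it in $X'$.
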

	
	\begin{proof}
		We only need to show that (W1) holds for $w$ on $(X',d)$.
		Indeed, if $\{x_n\}_{n\in\N}$ is $d$-bounded, then (W1) for $w$ on $(X,d)$ implies that there exists $\{n_k\}_{k\in\N}$ and $x\in X$ such that we have $x_{n_k}\to x$ in $w$.
		But $X'\subseteq X$ being $w$-closed implies that we must have $x\in X'$, as needed.
	\end{proof}
	
	The next result concerns product metrics, and requires some notation.
	For two metric spaces $(X_1,d_1),(X_2,d_2)$ and some $1\le q<\infty$, we define the metric $d_1\otimes_q d_2$ on $X_1\times X_2$ via
	\begin{equation*}
		(d_1\otimes_q d_2)((x_1,x_2),(x_1',x_2')) :=(d_1^q(x_1,x_1') + d_2^q(x_2,x_2'))^{1/q}.
	\end{equation*}
	Also, for two convergences $c_1,c_2$ on $X_1,X_2$, respectively, we define $c_1\otimes c_2$ to be the convergence on $X_1\times X_2$ such that $\{(x_{1,n},x_{2,n})\}_{n\in\N}$ and $(x_1,x_2)$ in $X_1\times X_2$ satisfy $(x_{1,n},x_{2,n})\to (x_1,x_2)$ in $c_1\otimes c_2$ if and only if we have both $x_{1,n}\to x_1$ in $c_1$ and $x_{2,n}\to x_2$ in $c_2$.
	Then we have the following:
	
	\begin{proposition}\label{prop:product}
		Consider any metric spaces $(X_1,d_1), (X_2,d_2)$ admitting weak convergences $w_1,w_2$, respectively, and let $1\le q<\infty$ be arbitrary.
		Then, $w_1\otimes w_2$ is a weak convergence for $(X_1\times X_2,d_1\otimes_q d_2)$.
	\end{proposition}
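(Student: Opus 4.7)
My plan is to verify each of the three axioms (W1), (W2), (W3) for $w_1 \otimes w_2$ on $(X_1 \times X_2, d_1 \otimes_q d_2)$ by reducing to the coordinatewise statements, using the elementary inequality $d_i(x_i, x_i') \le (d_1 \otimes_q d_2)((x_1,x_2),(x_1',x_2'))$ for $i=1,2$, together with the fact that $t \mapsto t^{1/q}$ is a strictly increasing continuous bijection of $[0,\infty)$.

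For (W1), the inequality above shows that a $d_1 \otimes_q d_2$-bounded sequence $\{(x_{1,n},x_{2,n})\}_{n\in\N}$ is $d_i$-bounded coordinatewise. I would then apply (W1) for $w_1$ to extract a subsequence $\{n_k\}$ along which $x_{1,n_k} \to x_1$ in $w_1$, and then apply (W1) for $w_2$ to the second-coordinate sequence along this subsequence to extract a further subsequence $\{n_{k_j}\}$ with $x_{2,n_{k_j}} \to x_2$ in $w_2$; subsequence stability of $w_1$ then gives $x_{1,n_{k_j}}\to x_1$ in $w_1$ as well. For (W2), I would take coordinatewise liminfs $d_i(x_i,y_i) \le \liminf_n d_i(x_{i,n},y_i)$ from (W2) for $w_i$, raise to the $q$-th power, add, and use the superadditivity $\liminf_n a_n + \liminf_n b_n \le \liminf_n (a_n+b_n)$ before taking $q$-th roots.

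The main obstacle is (W3), where the hypothesis is only that the aggregated quantity $d_1(x_{1,n},y_1)^q + d_2(x_{2,n},y_2)^q$ converges to $d_1(x_1,y_1)^q + d_2(x_2,y_2)^q$, not that each term converges individually. The plan is a subsequential extraction argument: given any subsequence, both $d_1(x_{1,n},y_1)^q$ and $d_2(x_{2,n},y_2)^q$ are bounded (they are dominated by the convergent sum), so I would extract a further subsubsequence along which both converge to limits $L_1^q, L_2^q$ with $L_1^q+L_2^q = d_1(x_1,y_1)^q+d_2(x_2,y_2)^q$; applying (W2) for $w_i$ gives $L_i \ge d_i(x_i,y_i)$ for each $i$, and combined with the equality of sums this forces $L_i = d_i(x_i,y_i)$. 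Since every subsequence admits a further subsubsequence along which $d_i(x_{i,n},y_i) \to d_i(x_i,y_i)$, the full sequence converges. Then (W3) for each $w_i$ yields $x_{i,n}\to x_i$ in $d_i$, which in turn gives $(x_{1,n},x_{2,n}) \to (x_1,x_2)$ in $d_1 \otimes_q d_2$, as required.
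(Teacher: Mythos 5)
Your proposal is correct and follows essentially the same route as the paper: coordinatewise reduction for all three axioms, with the key step in (W3) being that (W2) in each coordinate plus convergence of the aggregated sum forces $d_i(x_{i,n},y_i)\to d_i(x_i,y_i)$ for $i=1,2$. The only difference is cosmetic: where you extract convergent subsubsequences of the bounded real sequences $d_i^q(x_{i,n},y_i)$ and identify their limits, the paper manipulates $\liminf$ and $\limsup$ directly via the superadditivity inequality; both arguments then invoke (W3) coordinatewise.
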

	
	\begin{proof}
		First we consider (W1), so we suppose that $\{(x_{1,n},x_{2,n})\}_{n\in\N}$ and $(y_1,y_2)$ in $X_1\times X_2$ satisfy
		\begin{equation*}
			\sup_{n\in\N} (d_1\otimes_q d_2)((x_{1,n},x_{2,n}),(y_1,y_2)) < \infty.
		\end{equation*}
		By construction this implies that we have
		\begin{equation*}
			\sup_{n\in\N} d_1(x_{1,n},y_1) < \infty
		\end{equation*}
		and
		\begin{equation*}
			\sup_{n\in\N} d_2(x_{2,n},y_2) < \infty,
		\end{equation*}
		so by applying (W1) for both $w_1$ and $w_2$, we can get a single subsequence $\{n_k\}_{k\in\N}$ and some $x_1\in X_1$ and $x_2\in X_2$ satisfying both $x_{1,n_k}\to x_1$ in $w_1$ and $x_{2,n_k}\to x_2$ in $w_2$ as $k\to\infty$.
		By the construction of $w$, this means $(x_{1,n},x_{2,n})\to (x_1,x_2)$ in $w_1\otimes w_2$, as needed.
		
		Next we consider (W2), so we suppose that $\{(x_{1,n},x_{2,n})\}_{n\in\N}$ and $(x_1,x_2)$ in $X_1\times X_2$ satisfy  $(x_{1,n},x_{2,n})\to (x_1,x_2)$ in $w_1\otimes w_2$.
		By using (W2) for both $w_1$ and $w_2$, we have
		\begin{align*}
			(d_1\otimes_q d_2)((x_{1},x_{2}),(y_1,y_2)) &= (d_1^q(x_1,y_1) + d_2^q(x_2,y_2))^{1/q} \\
			&\le (\liminf_{n\to\infty}d_1^q(x_{1,n},y_1) + \liminf_{n\to\infty}d_2^q(x_{2,n},y_2))^{1/q} \\
			&\le (\liminf_{n\to\infty}(d_1^q(x_{1,n},y_1) +d_2^q(x_{2,n},y_2)))^{1/q} \\
			&= \liminf_{n\to\infty}(d_1^q(x_{1,n},y_1) +d_2^q(x_{2,n},y_2))^{1/q} \\
			&= \liminf_{n\to\infty}(d_1\otimes_q d_2)((x_{1,n},x_{2,n}),(y_1,y_2))
		\end{align*}
		by continuity. This shows (W2) for $w_1\otimes w_2$, as claimed.
		
		Lastly,  we show (W3).
		To do this, suppose that $\{(x_{1,n},x_{2,n})\}_{n\in\N}$ and $(x_1,x_2)$ in $X_1\times X_2$ satisfy  $(x_{1,n},x_{2,n})\to (x_1,x_2)$ in $w_1\otimes w_2$, and suppose that $(y_1,y_2)\in X_1\times X_2$ satisfies
		\begin{equation}\label{eqn:product-1}
			(d_1\otimes_q d_2)((x_{1,n},x_{2,n}),(y_1,y_2)) \to (d_1\otimes_q d_2)((x_{1},x_{2}),(y_1,y_2))
		\end{equation}
		as $n\to\infty$.
		Since (W2) for $w_2$ implies
		\begin{equation}\label{eqn:product-2}
			d_2(x_{2},y_2) \le \liminf_{n\to\infty}d_2(x_{2,n},y_2),
		\end{equation}
		we can subtract the $q$th power of \eqref{eqn:product-2} from the $q$th power of \eqref{eqn:product-1} to get:
		\begin{align*}
			d_1^q(x_1,y_1) &=  (d_1^q(x_1,y_1) + d_2^q(x_2,y_2)) -  d_2^q(x_2,y_2) \\ 
			&\ge  \lim_{n\to\infty}(d_1^q(x_{1,n},y_1) + d_2^q(x_{2,n},y_2)) -  \liminf_{n\to\infty}d_2^q(x_{2,n},y_2) \\
			&\ge  \limsup_{n\to\infty}((d_1^q(x_{1,n},y_1) + d_2^q(x_{2,n},y_2)) - d_2^q(x_{2,n},y_2)) \\
			&=  \limsup_{n\to\infty}d_1^q(x_{1,n},y_1).
		\end{align*}
		By combining this with (W2) for $w_1$, this implies $d_1(x_{1,n},y_1)\to d_1(x_1,y_1)$ as $n\to\infty$.
		Hence, (W3) for $w_1$ shows that we have $x_{1,n}\to x_1$ in $d_1$.
		By symmetry, the same argument shows that we have $x_{2,n}\to x_2$ in $d_2$.
		This shows $(x_{1,n},x_{2,n})\to (x_1,x_2)$ in $d_1\otimes_q d_2$, hence we have proven (W3) for $w_1\otimes w_2$.
		This complets the proof.
	\end{proof}
	
	Two extensions of this result are likely possible.
	First of all, since the metric $d_1\otimes_q d_2$ converges (say, pointwise) as $q\to\infty$, to
	\begin{equation*}
		(d_1\otimes_{\infty} d_2)((x_1,x_2),(x_1',x_2')):= \max\{d_1(x_1,x_1'),d_2(x_2,x_2')\},
	\end{equation*}
	it may be interesting to check whether $w_1\otimes w_2$ is also a weak convergence for $(X_1\times X_2,d_1\otimes_{\infty}d_2)$.
	Second, we note that one can likely prove a similar result for countable products of $(X_1,d_1)\otimes_q (X_2,d_2)\otimes_q \cdots$, simply by using a suitable diagaonliazation argument in the proof of (W1).
	However, we will not pursue these questions in the present work.
	
	Our next goal is to develop similar closure properties for quotients of metric spaces by certain group actions.
	It turns out that, in order to do this, we need to require a bit of compatibility between the action and the metric convergence and the action and the weak convergence.
	
	\begin{definition}
		For a set $X$, an action of a topological group $G$ on $X$ is said to be \textit{compatible with} a convergence $c$ on $X$ if
		\begin{equation*}
			x_n\to x \textnormal{ in } c \textnormal{ and } g_n\to g \textnormal{ implies } g_n\cdot x_n\to g\cdot x \textnormal{ in } c.
		\end{equation*}
		An action compatible with the metric convergence on a metric space will be called \textit{metrically compatible}, and an action compatible with a weak convergence for a metric space will be called \textit{weakly compatible}.
	\end{definition}
	
	Now we can state our result.
	For a metric space $(X,d)$ and a compact topological group $G$ acting isometrically on $(X,d)$, we define the metric
	\begin{equation*}
		d_{X/G}(u,u'):=\min_{x\in u,x'\in u'}d(x,x')
	\end{equation*}
	for $u,u'\in X/G$.
	Also, for a convergence $c$ on $X$, we write $c_{X/G}$ for the convergence on $X/G$ defined as follows:
	We say that $\{u_n\}_{n\in\N}$ and $u$ in $X/G$ have $u_n\to u$ in $c_{X/G}$ if for every $\{x_n\}_{n\in\N}$ in $X$ with $x_n\in u_n$ for all $n\in\N$, the sequence $\{x_n\}_{n\in\N}$ is relatively $w$-compact and any convergent subsequence thereof must lie in $u$;
	in other words, $c_{X/G}$ is exactly $\Gamma$-convergence of the orbits of $G$ under the convergence $c$.
	Then we get the following:
	
	\begin{proposition}\label{prop:quotient}
		Consider any metric space $(X,d)$ admitting a weak convergence $w$, and any compact topological group $G$ whose action on $(X,d)$ is isometric and weakly compatible.
		Then, $w_{X/G}$ is a weak convergence for $(X/G,d_{X/G})$.
	\end{proposition}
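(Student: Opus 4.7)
The plan is to verify axioms (W1), (W2), (W3) for $w_{X/G}$ on $(X/G, d_{X/G})$. Before doing so, I would establish one preliminary fact: every orbit $G \cdot y$ is $d$-bounded. Indeed, if not, I could extract $g_k \in G$ with $d(g_k \cdot y, y) \to \infty$; by compactness of $G$, pass to a subsequence with $g_k \to g_*$, and then weak compatibility together with (W2) would force $d(g_* \cdot y, y) \le \liminf_k d(g_k \cdot y, y) = \infty$, a contradiction. This guarantees that near-infimizing pairs $(x,y) \in u \times v$ can always be chosen with controlled $d$-size.

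For (W1), given $\sup_n d_{X/G}(u_n, v) < \infty$, I fix $y_0 \in v$ and choose near-infimizers $x_n \in u_n$ and $y_n \in v$ with $d(x_n, y_n) \le d_{X/G}(u_n, v) + 1$. Since $\{y_n\}$ lies in the $d$-bounded orbit $v$, the sequence $\{x_n\}$ is itself $d$-bounded, so (W1) for $w$ produces $x_{n_k} \to x$ in $w$; I then set $u := G \cdot x$. To verify $u_{n_k} \to u$ in $w_{X/G}$, I take any sequence $\{x'_{n_k}\}$ with $x'_{n_k} \in u_{n_k}$ and write $x'_{n_k} = g_{n_k} \cdot x_{n_k}$. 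Along any sub-subsequence, compactness of $G$ extracts a further subsequence with $g_{n_k} \to g$ in $G$, and weak compatibility gives $x'_{n_k} \to g \cdot x \in u$ in $w$; this simultaneously delivers relative $w$-compactness of $\{x'_{n_k}\}$ and shows that every $w$-subsequential limit lies in $u$.

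For (W2) and (W3), the central tool is the isometric change-of-variables $d(x_n, y_n) = d(g_n^{-1} \cdot x_n, y_0)$, where $y_n = g_n \cdot y_0$ for some fixed $y_0 \in v$. For (W2), setting $L := \liminf_n d_{X/G}(u_n, v)$ and passing to a realizing subsequence with near-infimizers, the hypothesis $u_n \to u$ in $w_{X/G}$ yields $x_n \to x \in u$ in $w$ along a subsequence, while compactness of $G$ extracts $g_n \to g \in G$. Weak compatibility then gives $g_n^{-1} \cdot x_n \to g^{-1} \cdot x$ in $w$, and (W2) for $w$ produces $d(g^{-1} \cdot x, y_0) \le \liminf_n d(g_n^{-1} \cdot x_n, y_0) = L$; applying isometry once more yields $d_{X/G}(u, v) \le d(x, g \cdot y_0) \le L$, since $g \cdot y_0 \in v$. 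For (W3), the additional hypothesis $d_{X/G}(u_n, v) \to d_{X/G}(u, v)$ forces equality throughout, so $d(g_n^{-1} \cdot x_n, y_0) \to d(g^{-1} \cdot x, y_0)$; (W3) for $w$ then upgrades $g_n^{-1} \cdot x_n \to g^{-1} \cdot x$ to $d$-convergence, and isometry gives $d(x_n, g_n g^{-1} \cdot x) \to 0$. Since $g_n g^{-1} \cdot x \in u$, this shows $d_{X/G}(u_n, u) \to 0$ along the subsequence, and a standard subsequence-of-subsequence argument extends this to the full sequence.

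The main obstacle I anticipate is that axiom (W2) is a \emph{separate} $w$-lower semicontinuity statement ($d(\cdot, y)$ is $w$-lsc for each fixed $y$), whereas the natural quantity $d(x_n, y_n)$ appearing in $d_{X/G}(u_n, v)$ has both arguments varying, and joint $w$-lower semicontinuity does not follow from (W2) in general. The key idea of my plan is to use the isometric action to absorb the variation of $y_n$ into the first argument via $d(x_n, y_n) = d(g_n^{-1} \cdot x_n, y_0)$, pinning down a fixed $y_0$ so that (W2) and (W3) can be applied directly. All other steps (group-theoretic compactness, weak compatibility, subsequence extraction) are routine once this substitution is in place.
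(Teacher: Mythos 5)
Your proof follows essentially the same route as the paper's: verify (W1)--(W3) directly, reduce the second argument to a fixed representative $y_0\in v$ via the isometric action, use compactness of $G$ to extract convergent group elements, and use weak compatibility to pass limits through the action before invoking (W1)--(W3) for $w$. The only substantive difference is cosmetic: in (W1) you identify the limit of an arbitrary selection $x'_{n_k}=g_{n_k}\cdot x_{n_k}$ directly via weak compatibility, whereas the paper first checks $d$-boundedness of $\{x'_{n_k}\}$ and applies (W1) for $w$ before matching the two limits; your version is slightly more economical.

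However, your justification of the preliminary fact that orbits are $d$-bounded is not valid. Axiom (W2) gives only \emph{lower} semicontinuity of $x\mapsto d(x,y)$ along $w$-convergent sequences, so from $g_k\to g_*$ and $g_k\cdot y\to g_*\cdot y$ in $w$ you obtain $d(g_*\cdot y,y)\le\liminf_k d(g_k\cdot y,y)$; if the right-hand side is $+\infty$ this inequality is vacuous, not contradictory. What you would need is an upper bound on the $\liminf$, which neither (W2) nor weak compatibility supplies (indeed, in the Wasserstein example a sequence can converge weakly while its $W_q$-distance to the limit diverges). The fact itself is harmless --- the paper uses $\sup_{g\in G}d(g\cdot y,y)<\infty$ in its own (W1) argument, and it holds once the action of the compact group is (jointly) continuous in $d$, since then the orbit is compact --- but it does not follow from (W2) as you claim, and your (W1) step relies on it to bound $\{x_n\}$. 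The cleanest repair is the one you already use in (W2)/(W3): replace $x_n$ by $g_n^{-1}\cdot x_n\in u_n$ so that the near-infimizing distance is measured against the fixed point $y_0$, which makes $\{g_n^{-1}\cdot x_n\}$ bounded without any appeal to orbit boundedness.
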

	
	\begin{proof}
		To show (W1), let us suppose that $\{u_n\}_{n\in\N}$ and $v$ in $X/G$ have
		\begin{equation*}
			\sup_{n\in\N}d_{X/G}(u_n,v) <\infty.
		\end{equation*}
		Now select an arbitrary $y\in v$ and get some $\{x_n\}_{n\in\N}$ in $X$ satisfying $x_n\in u_n$ and $d(x_n,y) = d(u_n,v)$ for all $n\in\N$.
		This means we have
		\begin{equation*}
			\sup_{n\in\N}d(x_n,y) <\infty,
		\end{equation*}
		so it follows from (W1) for $w$ that there exists $\{n_k\}_{k\in\N}$ and $x\in X$ such that we have $x_n\to x$ in $w$.
		Writing $u\in X/G$ for the orbit of $x$ under the action of $G$, we claim that we have $u_{n_k}\to u$ in $w_{X/G}$.
		To see this, let $\{x_k'\}_{k\in\N}$ be a sequence in $X$ with $x_k'\in u_{n_k}$ for all $k\in\N$.
		By definition, there exists some $g_k\in G$ for each $k\in\N$ such that we have $x_{k}' = g_k\cdot x_{n_k}$, and so we can use the fact that $G$ acts isometrically to compute:
		\begin{align*}
			d(x_k',y) &= d(g_k\cdot x_{n_k}, y) \\
			&= d(x_{n_k},g_k^{-1}\cdot y) \\
			&\le d(x_{n_k},y)+d(y,g_k^{-1}\cdot y) \\
			&= d(u_{n_k},v)+d(g_k\cdot y, y).
		\end{align*}
		In particular, this implies
		\begin{equation*}
			\sup_{k\in\N}d(x_k',y) \le \sup_{n\in\N}d(u_{n},v) + \sup_{g\in G}d(g\cdot y,y).
		\end{equation*}
		The first term is finite by assumption, and the second term is finite since $G$ is compact.
		Thus, we can apply (W1) for $w$ and the compactness of $G$ to get some $\{k_{j}\}_{j\in\N}$, $x'\in X$, and $g\in G$ such that we have $x_{k_j}'\to x'$ in $w$ and $g_{k_j}\to g$.
		It only remains to show that $x'\in u$, but this is easy:
		Since the action of $G$ is weakly compatible, we see that the sequence
		\begin{equation*}
			x_{k_j}' = g_{k_j}\cdot x_{n_{k_j}}
		\end{equation*}
		converges in $w$ to both $x'$ and $g\cdot x$, hence $x'=g\cdot x$.
		Since $x\in u$ by construction, this implies $x'\in u$.
		This proves (W1) for $w_{X/G}$.
		
		Next we show (W2) for $w_{X/G}$, so let us assume that $\{u_n\}_{n\in\N}$ and $u$ in $X/G$ have $u_n\to u$ in $w_{X/G}$, and that $v\in X/G$ is arbitrary.
		Then get some $\{n_k\}_{k\in\N}$ such that we have
		\begin{equation*}
			\liminf_{n\to\infty}d(u_n,v) = \lim_{k\to\infty}d(u_{n_k},v).
		\end{equation*}
		Also, let us fix $y\in v$ and get $\{x_k\}_{k\in\N}$ in $X$ such that we have $d(x_k,y) = d(u_{n_k},v)$ for all $k\in\N$.
		By the definition of $w_{X/G}$, this means there exists a further subsequence $\{k_j\}_{j\in\N}$ and some $x\in u$ such that we have $x_{k_j}\to x$ in $w$.
		Now applying (W2) for $w$, we get:
		\begin{align*}
			\liminf_{n\to\infty}d(u_n,v) &= \lim_{k\to\infty}d(u_{n_k},v) \\
			&= \lim_{k\to\infty}d(x_{k},y) \\
			&= \lim_{j\to\infty}d(x_{k_j},y) \\
			&\ge d(x,y) \\
			&\ge d_{X/G}(u,v).
		\end{align*}
		This shows (W2) for $w_{X/G}$, as needed.
		
		Finally, we show (W3) for $w_{X/G}$.
		So, suppose that $\{u_n\}_{n\in\N}$, $u$, and $v$ in $X/G$ have $u_n\to u$ in $w_{X/G}$ and
		\begin{equation*}
			d_{X/G}(u_n,v) \to d_{X/G}(u,v)
		\end{equation*}
		as $n\to\infty$.
		In order to show $d_{X/G}(u_n,u)\to 0$, it suffices to show that each subsequence of $\{u_n\}_{n\in\N}$ admits a further subsequence converging to $u$.
		So, we let $\{n_k\}_{k\in\N}$ be arbitrary.
		Then we fix $y\in v$, and let $\{x_k\}_{k\in\N}$ in $X$ satisfy $x_k\in u_{n_k}$ and $d(x_k,y) = d(u_{n_k},v)$ for all $k\in\N$.
		By the definition of $w_{X/G}$, there exists a subsequence $\{k_j\}_{j\in\N}$ and some $x\in u$ with $x_{k_j}\to x$ in $w$.
		By construction we have
		\begin{align*}
			\lim_{j\to\infty}d(x_{k_j},y) &= \lim_{k\to\infty}d(x_k,y) \\
			&= \lim_{n\to\infty}d(u_{n_k},v) \\
			&= \lim_{n\to\infty}d(u_n,v) \\
			&= d(u,v) \\
			&\le d(x,y),
		\end{align*}
		and by (W2) for $w$ we have
		\begin{align*}
			d(x,y)\le\liminf_{j\to\infty}d(x_{k_j},y).
		\end{align*}
		Combining these displays yields $d(x_{k_j},y)\to d(x,y)$, so (W3) for $w$ implies $x_{k_j}\to x$ in $d$.
		In particular, we have shown
		\begin{equation*}
			d_{X/G}(u_{n_{k_j}},u) \le d(x_{k_j},x) \to 0.
		\end{equation*}
		Since $\{n_k\}_{k\in\N}$ was arbitrary, we have shown
		\begin{equation*}
			d_{X/G}(u_{n},u) \to 0.
		\end{equation*}
		In other words, we have established (W3) for $w_{X/G}$.
		This finishes the proof.
	\end{proof}
	
	Our last result involves the notion of deformation of a metric space by a metric group, and this has statistical motivations that we will see in the next subsection.
	To state this precisely, let us say that $(G,\rho)$ is a \textit{metric group} if $\rho$ is a left-invariant metric inducing a topology which makes $G$ into a topological group, and let us say that $(G,\rho)$ is a \textit{proper metric group} if the metric space $(G,\rho)$ has the Heine-Borel property.
	If $(X,d)$ is a metric space and $(G,\rho)$ a proper metric group acting isometrically on $(X,d)$, then we define
	\begin{equation*}
		d_{G,\rho}(x,x') := \min_{g\in G}\sqrt{\rho^2(g,e) + d^2(x,g\cdot x')}
	\end{equation*}
	for $x,x'\in X$, where $e\in G$ is the identity element, which can easily be shown to be a metric on $X$.
	We refer to $(X,d_{G,\rho})$ as the \textit{deformation} of $(X,d)$ by $(G,\rho)$.
	
	Some remarks about this notion are due.
	First, we note that regularization is a sort of ``soft quotient'' whereas the standard quotient is a sort of ``hard quotient'', in the sense of ``soft'' and ``hard'' penalties in mathematical optimization.
	Second, we note that the term ``deformation'' is not standard, despite this operation being well-known in statistical shape analysis \cite{MillerYounes}; we use this term because this operation coincides with the notion of \textit{Cheeger deformation} if $(X,d)$ is a Riemannian manifold (see \cite[Chapter~6.1]{AlexandrinoBettiol}).
	Lastly, we mention that the existence of a suitable $\rho$ for a given group $G$ is, in great generality, guaranteed; it is known \cite{Struble1974} that every locally compact second countable Hausdorff group admits a left-invariant proper metric.
	This leads us to the last of our closure properties:
	
	\begin{proposition}\label{prop:partial-quotient}
		Consider any metric space $(X,d)$ admitting a weak convergence, and any proper metric group $(G,\rho)$ whose action on $(X,d)$ is isometric, metrically compatible, and weakly compatible.
		Then, $w$ is a weak convergence for $(X,d_{G,\rho})$.
	\end{proposition}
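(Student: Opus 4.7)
The plan is to verify (W1), (W2), (W3) for $w$ on $(X,d_{G,\rho})$ one at a time, relying throughout on the observation that the minimum in the definition of $d_{G,\rho}$ is attained: since $\rho$ is left-invariant and $(G,\rho)$ is proper, the sublevel set $\{g\in G:\rho(g,e)\le d(x,x')\}$ is $\rho$-compact, and on it the map $g\mapsto \rho^2(g,e)+d^2(x,g\cdot x')$ is continuous (by metric compatibility of the action). I will also use systematically that $d_{G,\rho}\le d$ (taking $g=e$), so that convergence in $d$ immediately gives convergence in $d_{G,\rho}$.

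For (W1), suppose $\{x_n\}$ is $d_{G,\rho}$-bounded from some $y\in X$. Choose $g_n\in G$ attaining the minimum in $d_{G,\rho}(x_n,y)$. Then $\rho(g_n,e)$ is bounded, so by properness of $(G,\rho)$ a subsequence $g_{n_k}\to g$ in $\rho$; by metric compatibility $g_{n_k}\cdot y\to g\cdot y$ in $d$, so $\{g_{n_k}\cdot y\}$ is $d$-bounded, and $d(x_{n_k},g_{n_k}\cdot y)$ is also bounded, so the triangle inequality makes $\{x_{n_k}\}$ $d$-bounded from $g\cdot y$. Applying (W1) for $w$ on $(X,d)$ yields a further subsequence $w$-converging to some $x\in X$.

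For (W2), let $x_n\to x$ in $w$ and fix $y\in X$. Pass to a subsequence realizing $\liminf_n d_{G,\rho}(x_n,y)$ (assumed finite), and pick minimizers $g_n$ for each $n$; by the same properness argument we may assume $g_n\to g$ in $\rho$, hence $g_n\cdot y\to g\cdot y$ in $d$ by metric compatibility. The reverse triangle inequality gives $\liminf_n d(x_n,g_n\cdot y)=\liminf_n d(x_n,g\cdot y)$, and (W2) for $w$ on $(X,d)$ bounds this below by $d(x,g\cdot y)$. Combining with continuity of $\rho$,
\begin{equation*}
\liminf_n d_{G,\rho}(x_n,y)^2 \ge \rho^2(g,e)+d^2(x,g\cdot y)\ge d_{G,\rho}(x,y)^2,
\end{equation*}
which is (W2) after taking square roots.

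For (W3), assume additionally that $d_{G,\rho}(x_n,y)\to d_{G,\rho}(x,y)$; it suffices to show every subsequence of $\{x_n\}$ admits a further subsequence that is $d$-convergent to $x$, since $d_{G,\rho}\le d$. Pick minimizers $g_n$ as before and extract $g_{n_k}\to g_*$ in $\rho$; then $\rho(g_{n_k},e)\to\rho(g_*,e)$ and so $d(x_{n_k},g_{n_k}\cdot y)^2$ tends to $d_{G,\rho}(x,y)^2-\rho^2(g_*,e)$, and by the reverse triangle inequality the same limit holds for $d(x_{n_k},g_*\cdot y)$. On the other hand (W2) for $w$ on $(X,d)$ gives $d(x,g_*\cdot y)\le \lim_k d(x_{n_k},g_*\cdot y)$, while the definition of $d_{G,\rho}$ forces $\rho^2(g_*,e)+d^2(x,g_*\cdot y)\ge d_{G,\rho}(x,y)^2$. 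These two inequalities are forced to be equalities, so $d(x_{n_k},g_*\cdot y)\to d(x,g_*\cdot y)$, and (W3) for $w$ on $(X,d)$ concludes $x_{n_k}\to x$ in $d$. The main delicate point is precisely this last step: showing that the minimizers $g_n$ in the definition of $d_{G,\rho}(x_n,y)$ converge along a subsequence to a minimizer for $d_{G,\rho}(x,y)$, so that one can transfer the hypothesis on $d_{G,\rho}$ into a hypothesis $d(x_{n_k},g_*\cdot y)\to d(x,g_*\cdot y)$ usable by (W3) for the original weak convergence. Properness of $(G,\rho)$ and metric compatibility of the action are exactly what make this transfer go through.
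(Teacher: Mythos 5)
Your proof is correct, and the overall skeleton matches the paper's: extract minimizing group elements, use properness of $(G,\rho)$ to pass to a convergent subsequence of them, and then reduce each axiom to the corresponding axiom for $w$ on $(X,d)$. The genuine difference is which factor you let the group element act on. You keep $g$ attached to the \emph{fixed reference point}, working with $d(x_n, g_n\cdot y)$ and replacing $g_n\cdot y$ by its $d$-limit $g_*\cdot y$ via the reverse triangle inequality; consequently you only ever invoke (W2) and (W3) for $w$ against the fixed point $g_*\cdot y$, and the only compatibility you use is metric compatibility (continuity of $g\mapsto g\cdot y$). The paper instead moves $g$ onto the \emph{weakly convergent sequence}, writing $d(g_k\cdot x_{n_k}, y)$, which requires weak compatibility to conclude $g_{k_j}\cdot x_{n_{k_j}}\to g\cdot x$ in $w$, and then needs an extra step at the end (acting by $g_{k_j}^{-1}$ and using metric compatibility again) to undo the group action. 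Your route is therefore slightly more economical: it proves the proposition without using the weak-compatibility hypothesis at all, which is a modest strengthening of the statement. You are also more careful than the paper in justifying that the minimum defining $d_{G,\rho}$ is attained (restricting to the compact sublevel set $\{\rho(g,e)\le d(x,x')\}$ and using continuity of the action), a point the paper leaves implicit.
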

	
	\begin{proof}
		To show (W1), suppose that $\{x_n\}_{n\in\N}$ and $y$ in $X$ satisfy
		\begin{equation*}
			\sup_{n\in\N}d_{G,\rho}(x_n,y)<\infty.
		\end{equation*}
		Now note that for each $n\in\N$ there exists $g_n\in G$ such that $d_{G,\rho}^2(x_n,y) = \rho^2(g_n,e)+d^2(g_n\cdot x_n,y)$.
		The use the fact $G$ acts isometrically to derive the bound
		\begin{align*}
			d^2(x_n,y) &= d^2(g_n\cdot x_n,g_n\cdot y) \\
			&\le 2d^2(g_n\cdot x_n, y) + 2d^2(g_n\cdot y, y) \\
			&\le 2\rho^2(g_n,e)+2d^2(g_n\cdot x_n, y) + d^2(g_n\cdot y, y) \\
			&= 2d_{G,\rho}^2(x_n, y) + d^2(g_n\cdot y, y)
		\end{align*}
		for all $n\in\N$, hence
		\begin{equation*}
			\sup_{n\in\N}d^2(x_n,y) \le 2\sup_{n\in\N}d_{G,\rho}^2(x_n,y) + 2\sup_{n\in \N}d^2(g_n\cdot y, y).
		\end{equation*}
		Note that the first term is finite by assumption.
		Also, the second term is finite since $\sup_{n\in\N}\rho(g_n,e)<\infty$ and $(G,\rho)$ being proper implies that $\{g_n\}_{n\in\N}$ is compact.
		Thus, applying (W1) for $w$ on $(X,d)$ guarantees the existence of some $\{n_k\}_{k\in\N}$ and $x\in X$ with $x_n\to x$ in $w$.
		This proves (W1) for $w$ on $(X,d_{G,\rho})$.
		
		To show (W2), we suppose that $\{x_n\}_{n\in\N}$ and $x$ in $X$ have $x_n\to x$ in $w$ and we let $y\in X$ be arbitrary.
		Now let $\{n_k\}_{k\in\N}$ be a subsequence satisfying
		\begin{equation}\label{eqn:partial-quo-1}
			\liminf_{n\to\infty}d_{G,\rho}(x_n,y) = \lim_{k\to\infty}d_{G,\rho}(x_{n_k},y),
		\end{equation}
		and note that for each $k\in\N$ we can get some $g_k\in G$ such that $d_{G,\rho}^2(x_{n_k},y) = \rho^2(g_k,e) + d^2(g_k\cdot x_{n_k},y)$.
		Observe that, if the common value of \eqref{eqn:partial-quo-1} is infinite, then there is nothing to prove; so, we may assume that this value is finite, and this implies
		\begin{equation}\label{eqn:partial-quo-2}
			\limsup_{k\to\infty}\rho(g_k,e) \le \lim_{k\to\infty}d_{G,\rho}(x_{n_k},y) < \infty.
		\end{equation}
		Since $(G,\rho)$ is proper, equation~\eqref{eqn:partial-quo-2} implies that there exists a further subsequence $\{k_j\}_{j\in\N}$ and some $g\in G$ such that we have $g_{k_j}\to g$.
		By the weak compatibility of the group action, this implies
		\begin{equation*}
			g_{k_j}\cdot x_{n_{k_j}}\to g\cdot x \textnormal{ in } w.
		\end{equation*}
		Also, we of course have $\rho(g_{k_j},e)\to \rho(g,e)$.
		We can now put together these observations, along with (W2) for $w$ for $(X,d)$, to compute:
		\begin{align*}
			\liminf_{n\to\infty}d_{G,\rho}^2(x_n,y) &= \lim_{j\to\infty}d_{G,\rho}^2(x_{n_{k_j}},y) \\
			&= \lim_{j\to\infty}(\rho^2(g_{k_j},e)+d^2(g_{k_j}\cdot x_{n_{k_j}},y)) \\
			&= \lim_{j\to\infty}\rho^2(g_{k_j},e) +\liminf_{j\to\infty}d^2(g_{k_j}\cdot x_{n_{k_j}},y) \\
			&\ge \rho^2(g,e)+ d^2(g\cdot x,y) \\
			&\ge d_{G,\rho}^2(x,y).
		\end{align*}
		This proves (W2) for $w$ for $(X,d_{G,\rho})$, as claimed.
		
		Lastly, we prove (W3), so let us assume that $\{x_n\}_{n\in\N}$ and $x$ in $X$ have $x_n\to x$ in $w$ and that $y\in X$ satisfies $d_{G,\rho}(x_n,y)\to d_{G,\rho}(x,y)$.
		In order to show $d_{G,\rho}(x_n,x)\to 0$, it suffices to show that each subsequence of $\{x_n\}_{n\in\N}$ admits a further subsequece converging to $x$.
		So, let $\{n_k\}_{k\in\N}$ be arbitrary.
		Note that we have
		\begin{equation}\label{eqn:partial-quo-3}
			\sup_{k\in\N}d_{G,\rho}^2(x_{n_k},x) \le 2\sup_{k\in\N}d_{G,\rho}^2(x_{n_k},y) + 2d_{G,\rho}^2(x,y),
		\end{equation}
		and that the first term on the right is finite by the assumption that $d_{G,\rho}(x_n,y)\to d_{G,\rho}(x,y)$.
		Also, let us get, for each $k\in\N$ some $g_k\in G$ with $d_{G,\rho}^2(x_{n_k},x) = \rho^2(g_k,e) + d^2(g_k\cdot x_{n_k},x)$.
		Then \eqref{eqn:partial-quo-3} implies
		\begin{equation*}
			\sup_{k\in\N}(\rho^2(g_k,e) + d^2(g_k\cdot x_{n_k},x)) < \infty.
		\end{equation*}
		Now we can apply (W3) for $w$ for $(X,d)$ and the properness of $(G,\rho)$ to get a subsequence $\{k_j\}_{j\in\N}$ and some $g\in G$ and $x\in X$ satisfying $g_{k_j}\to g$ and
		\begin{equation*}
			x_{n_{k_j}}\to x \textnormal{ in } w.
		\end{equation*}
		By the weak compatibility of the action of $G$, this further implies
		\begin{equation}\label{eqn:partial-quo-4}
			g_{k_j}\cdot x_{n_{k_j}}\to g\cdot x \textnormal{ in } w.
		\end{equation}
		Now we compute:
		\begin{align*}
			\rho^2(g,e) + d^2(g\cdot x, y) &\le \lim_{j\to\infty}\rho^2(g_{k_j},e) + \liminf_{j\to\infty}d^2(g_{k_j}\cdot x_{n_{k_j}},y) \\
			&= \liminf_{j\to\infty}\left(\rho^2(g_{k_j},e) +d^2(g_{k_j}\cdot x_{n_{k_j}},y)\right) \\
			&= \liminf_{j\to\infty}d_{G,\rho}^2(x_{n_{k_j}},y) \\
			&= \lim_{n\to\infty}d_{G,\rho}^2(x_n,y) \\
			&= d_{G,\rho}^2(x,y).
		\end{align*}
		Since $d_{G,\rho}(x,y)$ is defined via a minimum, this in fact implies
		\begin{equation*}
			\rho^2(g,e) + d^2(g\cdot x, y) = d_{G,\rho}^2(x,y).
		\end{equation*}
		Consequently, our assumption of $d_{G,\rho}(x_n,y)\to d_{G,\rho}(x,y)$ yields
		\begin{equation}\label{eqn:partial-quo-5}
			\rho^2(g_{k_j},e) + d^2(g_{k_j}\cdot x_{n_{k_j}},y) \to \rho^2(g,e) + d^2(g\cdot x,y)
		\end{equation}
		as $j\to\infty$.
		But we of course have $\rho^2(g_{k_j},e) \to \rho^2(g,e)$ by construction, so subtracting this from \eqref{eqn:partial-quo-5} gives
		\begin{equation}\label{eqn:partial-quo-6}
			d(g_{k_j}\cdot x_{n_{k_j}},y) \to d(g\cdot x,y).
		\end{equation}
		However, by combining \eqref{eqn:partial-quo-4} and \eqref{eqn:partial-quo-6} with (W3) for $w$ on $(X,d)$, we conclude 
		\begin{equation*}
			d(g_{k_j}\cdot x_{n_{k_j}},g\cdot x)\to 0
		\end{equation*}
		as $j\to\infty$.
		But also we have $g_{k_j}^{-1}\to g^{-1}$ so the metric compatibility of the action $G$ yields
		\begin{equation*}
			x_{n_{k_j}} = g_{k_j}^{-1}\cdot g_{k_j}\cdot x_{n_{k_j}} \to g^{-1}\cdot g\cdot x = x \textnormal{ in } d
		\end{equation*}
		as $j\to\infty$.
		In other words, we have shown
		\begin{equation*}
			d(x_{n_{k_j}},x)\to 0
		\end{equation*}
		as $j\to\infty$.
		Since $\{n_k\}_{k\in\N}$ was arbitrary, this implies $d(x_n,x)\to 0$ as $n\to\infty$.
		This shows (W3) for $w$ for $(X,d_{G,\rho})$, and finishes the proof.
	\end{proof}
	
	In the setting of Proposition~\ref{prop:partial-quotient} one may of course replace the proper metric $\rho$ with $\lambda^{-1} \rho$ for any $0<\lambda<\infty$, leading to a deformed metric of the form:
	\begin{equation*}
		d_{G,\lambda^{-1}\rho}(x,x') := \min_{g\in G}\sqrt{\frac{1}{\lambda^2}\rho^2(g,e) + d^2(x,g\cdot x')}
	\end{equation*}
	for $x,x'\in X$.
	The extremal cases of this parameterization are interesting to consider:
	As $\lambda \to 0$ we have $d_{G,\lambda^{-1}\rho}$ converging pointwise to the given metric $d$.
	As $\lambda\to \infty$ we have $d_{G,\lambda^{-1}\rho}$ converging pointwise to a pseudometric on $X$ which descends to the metric $d_{X/G}$ on the quotient space $X/G$.
	(In fact, if $(X,d)$ is a compact Riemannian manifold, then \cite[p. 143]{AlexandrinoBettiol} $d_{G,\lambda^{-1}\rho}$ converges to $d_{X/G}$ in the Gromov-Hausdorff sense as $\lambda\to \infty$.)
	As such, it is interesting to observe that the family of metrics $\{d_{G,\lambda^{-1}\rho}\}_{\lambda>0}$ on $X$, which in some sense interpolate between the metrics $d$ and $d_{X/G}$, share a single notion of weak convergence.
	
	\subsection{Further Examples}\label{subsec:further}
	
	By combining the results of Subsection~\ref{subsec:ex} and Subsection~\ref{subsec:closure}, we have a calculus for constructing a wide range of metric spaces admitting a weak convergence.
	In this final subsection we give a few examples of this; they are primarily motivated by statistical concerns that we will see later in the paper.
	
	First is a metric where Fr\'echet means have been applied in problems in computational tomography \cite{WassersteinTomography}, which has also been studied in recent works \cite{WassersteinAlignment,ProcrustesWasserstein}.
	
	\begin{example}[rotationally-invariant Wasserstein space]
		Let $(\Pcal_2(\R^m),W_2)$ denote the Wasserstein space of probability measures on $\R^m$ with finite variance, and let $\ortho(m)$ denote the space of orientation-preserving $m\times m$ rotation matrices
		Then let $\ortho(m)$ act on $\Pcal_2(\R^m)$ via $(U\cdot P)(A) := P(U^{-1}A)$, and define the pseudometric
		\begin{equation*}
			d(P,Q) := \min_{U\in \ortho(m)}W_2(U\cdot P,Q)
		\end{equation*}
		on $\Pcal_2(\R^m)$, which descends to a bona fide metric on the resulting quotient $\Pcal$ of $\Pcal_2(\R^m)$.
		From Example~\ref{ex:wass} we know that $(\Pcal_2(\R^m),W_2)$ admits a weak convergence, and it is straightforward to check that this action is isometric and weakly compatible; thus, Proposition~\ref{prop:quotient} implies that $(\Pcal,d)$ admits a weak convergence.
	\end{example}
	
	Second is a metric space arising in image classification problems originally studied in pattern theory \cite[Section~5.3]{MillerYounes}.
	
	\begin{example}[deformation of images by rotations]
		For $m\in\N$ and $1<q<\infty$, consider the space $L^q(\R^m;\R)$, regarded as a space of images where, for $f\in L^q(\R^m;\R)$ and $x\in \R^m$, we think of $f(x)$ as the greyscale intensity of $f$ at $x$.
		(Usually we have $m=2,3$ in applications.)
		Now define the metric $d$ on $L^q(\R^m;\R)$ via
		\begin{equation*}
			d(f,g) := \min_{A\in \skewsym(m)}\left(\|A\|_{\textnormal{F}}^2 + \left(\int_{\R^m}|f(e^{-A}x) - g(x)|^q\diff x\right)^{2/q}\right)^{1/2},
		\end{equation*}
		where $\skewsym(m)$ denotes the vector space of skew-symmetric $m\times m$ matrices.
		
		We notice that the metric $d$ can be realized as the deformation of $L^q(\R^m;\R)$ by the precomposition action of the group of orientation-preserving rotations $\ortho(m)$, where the metric $\rho$ is taken to be $\rho(U,I) := \|\log U\|$ for $U\in\ortho(m)$; since $\log$ maps $\ortho(m)$ to $\skewsym(m)$, we can parameterize the group via the latter.
		Also, it is straightforward to see that this group action is both weakly and metrically compatible.
		Thus, we can deduce that $(L^q(\R^m;\R),d)$ admits a weak convergence simply by combining Proposition~\ref{prop:partial-quotient} with Example~\ref{ex:unif-cvx-Banach}.
	\end{example}
	
	Thid is particular metric on the space of persistence diagrams  \cite{DivolLacombe}, which has become a central element of topological data analysis.
	
	\begin{example}[partial matching metrics on persistence diagrams]
		Define the set $D:=\{(x_1,x_2)\in\R^2: x_1<x_2\}$ which represents the open super-diagonal in $\R^2$, and for $1<q<\infty$ write $\mathcal{D}_q$ for the space of countable, locally-finite multisets $P$ of $D$ satisfying $\sum_{y\in P}\min_{x\in \Delta}\|x-y\|^q<\infty$, where $\Delta:=\{(x_1,x_2)\in\R^2: x_1=x_2\}$ is the diagonal in $\R^2$.
		For $P,P'\in\mathcal{D}_q$, we define
		\begin{equation*}
			B_q(P,P'):= \min_{\phi}\left(\sum_{x\in P\cup\Delta}\|x-\phi(x)\|^q\right)^{1/q},
		\end{equation*}
		where $\phi$ is taken over all bijections between $P\cup\Delta$ and $P'\cup\Delta$.
		We regard $\mathcal{D}_q$ as the set of persistence diagrams with finite $q$th moment, and we note that $B_q$ is a metric on the space $\mathcal{D}_q$
		
		We claim that $(\mathcal{D}_q,B_q)$ admits a weak convergence the sense of Definition~\ref{def:weak-like}.
		To see this, let us consider the space $\mathcal{M}_q$ of all Radon measures on $D$, and the metric $O_q$ of partial optimal transport \cite{DivolLacombe} with respect to $\Delta$.
		We claim that the topology of vague convergence is a weak convergence for $(\mathcal{M}_q,O_q)$.
		To see (W1), simply note that, if $\{P_n\}_{n\in\N}$ and $Q$ in $\mathcal{M}_q$ satisfy
		\begin{equation*}
			\sup_{n\in\N}O_q(P_n,Q) < \infty,
		\end{equation*}
		then we can apply the triangle inequality for $O_q$ and consider any Radon measure supported on $\Delta$ to conclude
		\begin{equation*}
			\sup_{n\in\N}\int_{D}\min_{x\in\Delta}\|x-y\|^q\diff P_n(y)<\infty.
		\end{equation*}
		In particular, combining this with \cite[Remark~3.2]{DivolLacombe} implies $\sup_{n\in\N}|P_n|(K) < \infty$ for all compact $K\subseteq D$, so Helly's selection theorem guarantees the existence of a vaguely convergent subsequence of $\{P_n\}_{n\in\N}$.
		Also note that (W2) is proven in \cite[Proposition~3.1]{DivolLacombe} and that (W3) is proven in \cite[Theorem~3.4]{DivolLacombe}.
		
		Now note that we can identify $\mathcal{D}_q$ with a subset of $\mathcal{M}_q$ by identifying each countable, locally-finite multiset $P$ with a counting measure supported on $P$.
		Moreover, it is known \cite[Proposition~A.5]{DivolLacombe} that, under this identification, $\mathcal{D}_q$ is a vaguely closed subset of $\mathcal{M}_q$.
		Therefore, the result follows from Proposition~\ref{prop:restriction}.
	\end{example}
	
	Next is a metric on a suitable space of smooth loops in the plane; we think of these as the boundaries of two-dimensional images, and they have been extensively studied in applied and computational aspects of statistical shape analysis \cite{SrivastavaKlassen}.
	
	\begin{example}[Sobolev metric on unparameterized unregistered planar loops]\label{ex:loops}
		For $\mathbb{S}^1=[0,2\pi)$ endowed with arithmetic modulo $2\pi$, consider the collection $\tilde{\mathcal{L}}$ of all functions $f:\mathbb{S}^{1}\to\R^2$ which are twice differentiable almost everywhere, have first derivative satisfying $\|f'(t)\| =1$ for all $0\le t \le 2\pi$, and which satisfy the condition
		\begin{equation*}
			\int_{0}^{2\pi}\left(\|f(t)\|^2+\|f''(t)\|^2\right)\diff t< \infty.
		\end{equation*}
		On the space $\tilde{\mathcal{L}}$ we define the pseudo-metric
		\begin{equation*}
			\tilde{d}(f,g):=\min_{\substack{\theta\in\mathbb{S}^1 \\ U\in\ortho(2)}}\left(\int_{0}^{2\pi}\sum_{k=0}^{2}\|Uf^{(k)}(t-\theta) -g^{(k)}(t)\|^2\diff t\right)^{1/2},
		\end{equation*}
		which descends to a bona fide metric $d$ on the natural quotient $\mathcal{L}$ of $\tilde{\mathcal{L}}$.
		
		We claim that the metric space $(\mathcal{L},d)$ admits a weak convergence, and we prove this by building it step-by-step using the results of the previous subsections.
		First, note that the Sobolev space $H^{2,2}(\mathbb{S}^{1};\R^2)$ is a Hilbert space hence, by Example~\ref{ex:unif-cvx-Banach} or Example~\ref{ex:hadamard}, its usual weak convergence is a weak convergence in the sense of Definition~\ref{def:weak-like}.
		Next we observe (say, by integrating against a suitable test function and using integration by parts) that the map $e_t:H^{2,2}(\mathbb{S}^{1};\R^2)\to \R$ defined via $e_t(f):= f'(t)$ is weakly continuous.
		Thus, the space $\tilde{\mathcal{L}}$ is a weakly closed subset of $H^{2,2}(\mathbb{S}^{1};\R^2)\to \R$, so it follows from Proposition~\ref{prop:restriction} that $\tilde{\mathcal{L}}$, when endowed with the metric from the ambient space $H^{2,2}(\mathbb{S}^{1};\R^2)$, admits a weak topology via the restriction of the weak convergence of $H^{2,2}(\mathbb{S}^{1};\R^2)$.
		Lastly, we let the compact group $\mathbb{S}^1\times \ortho(2)$ act isometrically on $\tilde{\mathcal{L}}$ via
		\begin{equation*}
			((\theta,U)\cdot f)(t):= Uf(t-\theta),
		\end{equation*}
		and we observe that $d$ is exactly the quotient metric
		\begin{equation*}
			d_{\tilde{\mathcal{L}}/(\mathbb{S}^1\times \ortho(2))}.
		\end{equation*}
		In fact, one can easily see that the action of $\mathbb{S}^1\times \ortho(2)$ on $\tilde{\mathcal{L}}$ is compatible with the weak convergence.
		Thus, Proposition~\ref{prop:quotient} shows that $(\mathcal{L},d)$ admits a weak convergence.
	\end{example}
	
	We note that, in the preceding example, there are many different ways to formulate similar metrics on spaces of curves; see \cite{MichorMumford, Kurtek, Sundaramoorthi, BruverisMichorMumford, TumpachPreston}.
	We believe an interesting avenue for future work would be to verify the existence of a weak convergence in these other models.
	
	Last is a metric on the space of covariance operators on a Hilbert space, which has recently been studied in functional analysis, optimal transport, and quantum information theory \cite{PanaretosSantoroBW, Kroshnin, Masarotto, ZemelPanaretosProcrustes}.
	
	\begin{example}[Bures-Wasserstein metric on infinite-dimensional Hilbert space]\label{ex:BW-inf-dim}
		Let $\hilbert$ denote a real, separable, infinite-dimensional Hilbert space, with norm $\|\cdot\|$ and inner product $\langle\,\cdot\,,\,\cdot\,\rangle$.
		We write $\covspace$ for the space of covariance operators on $\hilbert$, that is, the space of all non-negative, self-adjoint, trace-class linear operators from $\hilbert$ to itself.
		The \textit{Bures-Wasserstein metric} $\Pi$ on $\covspace$ is defined via
		\begin{equation*}
			\Pi(\Sigma,\Sigma') :=\sqrt{ \trace(\Sigma)+\trace(\Sigma')-2\trace({\Sigma}^{\frac{1}{2}}\Sigma'{\Sigma}^{\frac{1}{2}})^{\frac{1}{2}}},
		\end{equation*}
		although it is known to have several equivalent formulations; see \cite{Masarotto}.
		Importantly, note that, if we identify $\Sigma,\Sigma'\in\covspace$ with the centered Gaussian measures $\mu,\mu'$ on $\hilbert$ with covariance operators given by $\Sigma,\Sigma'$, respectively, then we have $\Pi(\Sigma,\Sigma')= W_2(\mu,\mu')$.
		
		We claim that the metric space $(\covspace,\Pi)$ admits a weak convergence.
		To show this, we first show the that $(\Pcal_2(\hilbert),W_2)$ admits a weak convergence.
		To define this, let $\{e_i\}_{i\in\N}$ denote an orthonormal basis for $(\hilbert,\|\cdot\|)$, and define the pre-Hilbert norm
		\begin{equation*}
			\|x\|' := \sqrt{\sum_{i=1}^{\infty}i^{-2}\langle x,e_i\rangle^2}
		\end{equation*}
		for $x\in \hilbert$; we say that $\{P_n\}_{n\in\N}$ and $P$ in $\Pcal_2(\hilbert)$ satisfy $P_n\to P$ in $w$ if we have $\int_{\hilbert}\phi\diff P_{n} \to \int_{\hilbert}\phi\diff P$ for all bounded continuous functions $\phi:(\hilbert,\|\cdot\|')\to\R$.
		We claim that $w$ is indeed a weak convergence for $(\Pcal_2(\hilbert),W_2)$ in the sense of Definition~\ref{def:weak-like}.
		(We emphasize that $W_2$ always refers to the Wasserstein space with respect to the norm $\|\cdot\|$, although this does not appear in the notation.)
		To see (W1), note that if $\{P_n\}_{n\in\N}$ and $Q$ in $\Pcal_2(\hilbert)$ satisfy
		\begin{equation*}
			\sup_{n\in\N}W_2(P_n,Q)<\infty,
		\end{equation*}
		then the same argument as in Example~\ref{ex:wass} shows
		\begin{equation*}
			\sup_{n\in\N}P_n(\hilbert\setminus B_{\ell}(0))\to 0
		\end{equation*}
		as $\ell\to\infty$, where $B_{\ell}(0):=\{x\in \hilbert: \|x\|\le \ell\}$ is the closed ball of radius $\ell$ in $\hilbert$ with respect to $\|\cdot\|$; consequently, (W1) follows from \cite[Lemma~5.1.12(c)]{AmbrosioGigliSavare}.
		Moreover, (W2) is proven in \cite[Lemma~7.1.4]{AmbrosioGigliSavare}.
		Finally, we note that (W3) can be proven with the exact same proof as \cite[Lemma~14]{WassersteinBarycenter}, simply by noting that both the gluing lemma and the Skorokhod representation theorem hold for tight measures on a separable (possibly incomplete) metric space.
		In order to complete the proof, it suffices by Proposition~\ref{prop:restriction} to show that the Gaussian measures are $w$-closed in $\Pcal_2(\hilbert)$.
		To do this, suppose that $\{\mu_n\}_{n\in\N}$ and $\mu\in\Pcal_2(\hilbert)$ satisfy $\mu_{n}\to \mu$ in $w$, and that there exist $\{\Sigma_n\}_{n\in\N}$ in $\covspace$ such that $\mu_n$ can be identified with the centered Gaussian measure with covariance operator $\Sigma_n$ for each $n\in\N$.
		Now apply \cite[Lemma~5.1.12(f)]{AmbrosioGigliSavare} to see that all finite-dimensional distributions of $\mu$ are Gaussian, hence $\mu$ is a Gaussian measure.
		This completes the proof.
	\end{example}
	
	\section{The Main Theorem}\label{sec:main}
	
	This section contains the main theorems of the paper.
	That is, we prove suitable convergence of the Fr\'echet mean sets under suitable convergence of the underlying measures (Theorem~\ref{thm:main}); as a consequence, we show compactness of the Fr\'echet mean set (Corollary~\ref{cor:main-cpt}) and uniform consistency of the Fr\'echet mean sets under suitable convergence of the underlying measures (Corollary~\ref{cor:main-cty}).
	We emphasize that the results in this section are purely analytic since they require only convergence of the underlying measures; in the next section we will consider the probabilistic setting of empirical measures of some random variables.
	
	Throughout this section, $(X,d)$ denotes a metric space.
	Unless otherwise stated, all metric and topological notions refer to the topology generated by the metric $d$.
	We write $B_r^{\circ}(x) := \{y\in X: d(x,y)<r\}$ and $\bar B_r(x) := \{y\in X: d(x,y)<r\}$ for the open and closed balls of radius $r>0$ around $x\in X$.
	
	Now let us introduce some notation in order to state our results.
	For a metric space $(X,d)$, we write $\Pcal(X)$ for the set of Borel probability measures on $X$, with respect to the topology generated by the metric $d$.
	For $r\ge 0$, we write $\Pcal_r(X)\subseteq \Pcal(X)$ for the set of $\mu\in \Pcal(X)$ satisfying $\int_{X}d^r(x,y)\diff \mu(y)<\infty$ for some $x\in X$.
	(This holds for some $x\in X$ if and only if it holds for all $x\in X$; see \eqref{eqn:ineq-1}.)
	We regard $\Pcal_r(X)$ as the set of distributions on $X$ with $r$ moments finite.
	
	For a metric space $(X,d)$, we write $\weak$ for the topology on $\Pcal(X)$ such that $\{\mu_n\}_{n\in\N}$ and $\mu$ in $\Pcal(X)$ have $\mu_n\to \mu$ in $\weak$ if and only if we have $\int_{X}f\diff\mu_n\to\int_{X}f\diff\mu$ for all bounded continuous $f:(X,d)\to\R$.
	Further, for $r\ge 0$, we write $\weak^r$ for the topology on $\Pcal_r(X)$ such that $\{\mu_n\}_{n\in\N}$ and $\mu$ in $\Pcal_r(X)$ have $\mu_n\to \mu$ in $\weak^r$ if and only if we have $\mu_n\to \mu$ in $\weak$ and $\int_{X}d^r(x,y)\diff\mu_n(y)\to\int_{X}d^r(x,y)\diff\mu(y)$ for all $x\in X$; it turns out \cite[Lemma~2.1]{EvansJaffeFrechet} that, if $\mu_n\to \mu$ in $\weak$, then $\int_{X}d^{r}(x,y)\diff\mu_n(x)\to \int_{X}d^{r}(x,y)\diff\mu(x)$ for some $x\in X$ is equivalent to $\int_{X}d^{r}(x,y)\diff\mu_n(x)\to \int_{X}d^{r}(x,y)\diff\mu(x)$ for all $x\in X$.
	We also write $\Pcal_0(X):=\Pcal(X)$ and $\weak^0 :=\weak$.
	
	Now we introduce two fundamental inequalities.
	First, define the constant $c_r := \max\{1,2^{r-1}\}$ for $r\ge 0$ which has the property that for all $x,x',x''\in X$ we have
	\begin{equation}\label{eqn:ineq-1}
		d^r(x,x'') + c_r(d^r(x,x') + d^r(x',x'')).
	\end{equation}
	Second, for all $p\ge 1$ and all $x,x',x''\in X$ we have
	\begin{equation}\label{eqn:renorm}
		|d^p(x,x'')-d^p(x',x'')| \le pd(x,x')(d^{p-1}(x,x'')+d^{p-1}(x',x'')).
	\end{equation}
	These results are elementary, and proofs can be found in \cite[Equation~(8)]{EvansJaffeFrechet} and \cite[Lemma~2.6]{EvansJaffeFrechet}, respectively.
	
	Next we introduce the central notion of the paper.
	
	\begin{definition}
		For any metric space $(X,d)$ and any $p\ge1$, we define the function $W_p:\Pcal_{p-1}(X)\times X^2\to\R$ for $p \ge 1$ via
		\begin{equation*}
			W_p(\mu,x,x'):=\int_{X}(d^p(x,y)-d^p(x',y))\diff\mu(y),
		\end{equation*}
		called the \textit{Fr\'echet $p$-functional of $\mu$.}
		Observe that the value of the integral is well-defined, because of \eqref{eqn:renorm}.
	\end{definition}
	
	\begin{definition}
		For any metric space $(X,d)$, any $p\ge 1$, and any $\mu\in\Pcal_{p-1}(X)$, we write $\obj_p(\mu)\subseteq X$ for the set
		\begin{equation*}
			\obj_{p}(\mu) := \left\{x\in X : W_p(\mu,x,x')\le 0 \textrm{ for all } x'\in X \right\},
		\end{equation*}
		called the \textit{Fr\'echet $p$-mean set of $\mu$.}
		Observe that this set can be empty, it can be a singleton, or it can consist of more than one point.
	\end{definition}
	
	In case this notion is somewhat opaque, we include the following remarks:
	First, if it happens that $\mu\in\Pcal_p(X)$ instead of just $\mu\in\Pcal_{p-1}(X)$, then we in fact have
	\begin{equation*}
		\obj_p(\mu) = \underset{x\in X}{\arg\min}\int_{X}d^p(x,y)\diff\mu(y).
	\end{equation*}
	For example, this holds whenever $\mu$ takes the form of an empirical measure $\frac{1}{n}\sum_{i=1}^{n}\delta_{y_i}$ for some points $y_1,\ldots, y_n\in X$, since then $\mu\in\bigcap_{p\ge 1}\Pcal_p(X)$.
	Second, in the general case of $\mu\in\Pcal_{p-1}(X)$, we in fact have
	\begin{equation*}
		\obj_p(\mu) = \underset{x\in X}{\arg\min}\int_{X}(d^p(x,y)-d^p(o,y))\diff\mu(y) = \underset{x\in X}{\arg\min}\,W_p(\mu,x,o)
	\end{equation*}
	for any $o\in X$, which we regard as a choice of ``origin'', although the set of minimizers does not depend on $o$.
	
	Lastly, we will need the following:
	
	\begin{definition}
		For any metric space $(X,d)$, any $p\ge 1$, any $\mu\in\Pcal_{p-1}(X)$, and any $o\in X$ we set
		\begin{equation*}
			\val_{p}(\mu,o) := \inf_{x\in X}W_p(\mu,x,o),
		\end{equation*}
		called the \textit{Fr\'echet $p$-variance of $\mu$ with origin $o$.}
	\end{definition}
	
	Of course the connection between these notions is that we have
	\begin{equation*}
		\obj_p(\mu) = \left\{x\in X: W_p(\mu,x,o) = \val_p(\mu,o) \right\}
	\end{equation*}
	for all $o\in X$.
	However, note that the level $\val_p(\mu,o)$ depends on the choice of $o\in X$ even though the level set $\obj_p(\mu)$ does not.
	
	Next we state a small technical results that we will need later.
	Their proofs are standard in the case that $(X,d)$ is Polish, but we note that they easily extend to the general separable setting, and we omit the proof for the sake of brevity.
	
	\begin{lemma}\label{lem:technical-2}
		If $(X,d)$ is a separable metric space and $0\le r < \infty$, and if $\{\mu_n\}_{n\in\N}$ and $\mu$ in $\Pcal_r(X)$ have $\mu_n\to \mu$ in $\weak^r$, then we have
		\begin{equation*}
			\lim_{L\to\infty}\limsup_{n\to\infty}\int_{X\setminus B^{\circ}_{L}(o)}d^{r}(o,y)\diff\mu_{n}(y) = 0
		\end{equation*}
		for all $o\in X$.
	\end{lemma}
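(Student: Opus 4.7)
The plan is to adapt the cutoff argument of Lemma~\ref{lem:technical-1} by weighting it with $d^r(o,\cdot)$. The obstruction to applying $\weak$-convergence directly is that $d^r(o,\cdot)$ is unbounded; however, the definition of $\weak^r$ provides exactly the extra ingredient needed, namely convergence of the full $r$th moment. Subtracting the $\weak$-limit of a bounded truncation from this moment convergence will isolate the tail and pin it down uniformly in $n$.

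Concretely, I would fix $o\in X$ and, for each $L>1$, recycle the continuous cutoff $f_L\colon X\to[0,1]$ from the proof of Lemma~\ref{lem:technical-1}, which vanishes on $\bar B_{L-1}(o)$ and equals $1$ on $X\setminus B^{\circ}_L(o)$. Decompose $d^r(o,y)=d^r(o,y)(1-f_L(y))+d^r(o,y)f_L(y)$. The first summand is a bounded continuous function (supported in $\bar B_L(o)$ and bounded by $L^r$), so $\mu_n\to\mu$ in $\weak$ gives
\[
\int_X d^r(o,y)(1-f_L(y))\diff\mu_n(y)\to \int_X d^r(o,y)(1-f_L(y))\diff\mu(y).
\]
Combining this with the hypothesis $\int_X d^r(o,y)\diff\mu_n(y)\to\int_X d^r(o,y)\diff\mu(y)$ from $\weak^r$-convergence and subtracting yields
\[
\int_X d^r(o,y)f_L(y)\diff\mu_n(y)\to \int_X d^r(o,y)f_L(y)\diff\mu(y).
\]

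To extract the tail bound, I use $f_L\equiv 1$ on $X\setminus B^{\circ}_L(o)$ on the left and $f_L\equiv 0$ on $\bar B_{L-1}(o)$ on the right, obtaining
\[
\limsup_{n\to\infty}\int_{X\setminus B^{\circ}_L(o)}d^r(o,y)\diff\mu_n(y)\le \int_X d^r(o,y)f_L(y)\diff\mu(y)\le \int_{X\setminus B^{\circ}_{L-1}(o)}d^r(o,y)\diff\mu(y).
\]
Letting $L\to\infty$, the right-hand side vanishes by the dominated convergence theorem: the integrand is dominated by $d^r(o,\cdot)\in L^1(\mu)$ (which is integrable because $\mu\in\Pcal_r(X)$) and converges pointwise to $0$. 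I do not anticipate any substantive obstacle; separability is not actually used in this argument (the cutoff $f_L$ exists in any metric space), and the result is essentially a uniform-integrability fact that falls out of the definition of $\weak^r$ once the right bounded test function is isolated.
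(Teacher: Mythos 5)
Your proof is correct and follows essentially the same route as the paper: the same cutoff $f_L$, the same sandwich $\ind_{X\setminus B^{\circ}_L(o)}\le f_L\le \ind_{X\setminus B^{\circ}_{L-1}(o)}$, and the same dominated-convergence finish. The only difference is that where the paper justifies $\int_X d^r(o,y)f_L(y)\diff\mu_n(y)\to\int_X d^r(o,y)f_L(y)\diff\mu(y)$ by citing results of Ambrosio--Gigli--Savar\'e on uniformly integrable test functions, you prove it directly by splitting off the bounded continuous piece $d^r(o,\cdot)(1-f_L)$ and subtracting from the converging $r$th moment --- a self-contained justification of the same step, and your observation that separability is not needed here is accurate.
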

	
	Now we can prove a fundamental result which allows us to conclude that certain Fr\'echet mean sets and their ``relaxations'' (see \cite{FrechetRelaxation}) are uniformly bounded.
	To state this, let us write
	\begin{equation*}
		\obj_p(\mu;\varepsilon) = \left\{x\in X: W_p(\mu,x,o) \le \val_p(\mu,o) +\varepsilon\right\}
	\end{equation*}
	for all $p\ge1,\mu\in\Pcal_{p-1}(X),o\in X$, and $\varepsilon\ge 0$; note that we of course have $\obj_p(\mu;0) = \obj_p(\mu)$, and also that $\obj_p(\mu;\varepsilon)$ is non-empty, by definition, for $\varepsilon>0$.
	We can also equivalently write
	\begin{equation*}
		\obj_{p}(\mu;\varepsilon) = \left\{x\in X : W_p(\mu,x,x')\le \varepsilon \textrm{ for all } x'\in X \right\},
	\end{equation*}
	which we previously saw in the case of $\varepsilon = 0$.
	Then we have the following, which is a slight adaptation of \cite[Lemma~C.1]{FrechetRelaxation}.
	
	\begin{lemma}\label{lem:obj-bdd}
		Consider any separable metric space $(X,d)$ and any $p\ge 1$.
		If $\{\mu_n\}_{n\in\N}$ and $\mu$ in $\Pcal_{p-1}(X)$ have $\mu_n\to \mu$ in $\weak^{p-1}$ and if $\{\varepsilon_n\}_{n\in\N}$ is any bounded sequence of non-negative real numbers, then there exists a $d$-bounded set $B\subseteq X$ satisfying $M_p(\mu_n;\varepsilon_n)\subseteq B$ for all $n\in\N$.
	\end{lemma}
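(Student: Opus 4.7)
The plan is to fix any origin $o \in X$ and show that $\sup_n \sup_{x \in \obj_p(\mu_n;\varepsilon_n)} d(x,o) < \infty$, which then lets us take $B$ to be a sufficiently large closed $d$-ball around $o$. The starting point is the observation that taking the test point $x' = o$ in the definition of $\val_p(\mu_n,o)$ gives $\val_p(\mu_n,o) \le W_p(\mu_n,o,o) = 0$, so the inclusion $x_n \in \obj_p(\mu_n;\varepsilon_n)$ forces
\begin{equation*}
 W_p(\mu_n,x_n,o) \le \val_p(\mu_n,o) + \varepsilon_n \le \varepsilon_n,
\end{equation*}
which is bounded in $n$ by hypothesis. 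So it will suffice to argue by contradiction: if some sequence $x_n \in \obj_p(\mu_n;\varepsilon_n)$ satisfied $R_n := d(x_n,o) \to \infty$, we will show $W_p(\mu_n,x_n,o) \to \infty$, which contradicts the display.

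To produce this divergent lower bound, fix $L > 0$ (to be chosen large) and split $W_p(\mu_n,x_n,o) = \int_{B_L^\circ(o)} + \int_{X \setminus B_L^\circ(o)}$. On $B_L^\circ(o)$, the reverse triangle inequality gives $d(x_n,y) \ge R_n - L$ and $d(o,y) \le L$, hence the pointwise bound $d^p(x_n,y) - d^p(o,y) \ge (R_n - L)^p - L^p$. On $X \setminus B_L^\circ(o)$, use \eqref{eqn:renorm} together with \eqref{eqn:ineq-1} applied to $d^{p-1}(x_n,y) \le c_{p-1}(R_n^{p-1} + d^{p-1}(o,y))$ to get
\begin{equation*}
 d^p(x_n,y) - d^p(o,y) \ge -p c_{p-1} R_n^p - p(c_{p-1}+1) R_n\, d^{p-1}(o,y).
\end{equation*}
Integrating both bounds yields
\begin{align*}
 W_p(\mu_n,x_n,o) &\ge \bigl((R_n - L)^p - L^p\bigr)\mu_n(B_L^\circ(o)) - p c_{p-1} R_n^p\, \mu_n(X \setminus B_L^\circ(o)) \\
 &\quad - p(c_{p-1}+1) R_n \int_{X \setminus B_L^\circ(o)} d^{p-1}(o,y)\diff\mu_n(y).
\end{align*}

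The endgame is a careful choice of $L$. By Lemma~\ref{lem:technical-1} applied to $\mu_n \to \mu$ in $\weak$, we may choose $L$ so large that $\limsup_n \mu_n(X \setminus B_L^\circ(o)) < \eta$ for any prescribed $\eta > 0$; by Lemma~\ref{lem:technical-2} applied with $r = p-1$ to $\mu_n \to \mu$ in $\weakp$ with $p$ replaced by $p-1$, we may simultaneously ensure $\limsup_n \int_{X \setminus B_L^\circ(o)} d^{p-1}(o,y)\diff\mu_n(y) < \eta$. With $\eta$ chosen small (depending only on $p$, e.g. $\eta < 2^{-p}/(4 p c_{p-1})$) and with $n$ large enough that $R_n \ge 2L$ and both tail estimates hold, the first term is at least $(1-2\eta)((R_n/2)^p - L^p)$, while the second and third terms are dominated by a small constant multiple of $R_n^p$ plus a lower order term in $R_n$. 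The net lower bound is of the form $c R_n^p - C$ for positive constants $c, C$, which tends to $+\infty$, delivering the desired contradiction.

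The main obstacle is the interplay between the two error terms in the displayed inequality: the tail probability and tail $(p-1)$-moment both appear multiplied by growing powers of $R_n$, so it is essential that weak convergence in $\weakp$ controls both of them uniformly in $n$, and this is precisely the content of Lemmas~\ref{lem:technical-1} and \ref{lem:technical-2}. The bookkeeping that balances the coefficient $(1-2\eta)$ against $p c_{p-1}\eta$ is the delicate point, but any sufficiently small choice of $\eta$ works.
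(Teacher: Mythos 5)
Your proof is correct and follows essentially the same route as the paper's: fix an origin, split $W_p(\mu_n,x_n,o)$ at a ball $B_L^\circ(o)$ whose radius is chosen via Lemma~\ref{lem:technical-1} and Lemma~\ref{lem:technical-2} to control the tail mass and tail $(p-1)$-moment uniformly in $n$, and show that the resulting lower bound grows like $d^p(x_n,o)$, contradicting $W_p(\mu_n,x_n,o)\le\varepsilon_n$. (One small quibble: your illustrative threshold $\eta<2^{-p}/(4pc_{p-1})$ is not quite small enough when $p=1$, where the third error term is of the same order $R_n^p=R_n$ as the main term rather than lower order; but, as you note, any sufficiently small $\eta$ does the job.)
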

	
	\begin{proof}
		Fix an arbitrary $o\in X$.
		First, we use Lemma~\ref{lem:technical-2} for $r=p-1$ and $r=0$ to choose $L>0$ large enough so that we have
		\begin{equation}\label{eqn:moment-bdd-4}
			\int_{X\setminus B^{\circ}_{L}(o)}d^{p-1}(o,y)d\mu_{n}(y) \le \frac{1}{p2^{2p+2}}
		\end{equation}
		and
		\begin{equation}\label{eqn:moment-bdd-3}
			\mu_{n}(X\setminus B^{\circ}_{L}(o)) \le \frac{1}{p2^{2p}} \le \frac{1}{2}.
		\end{equation}
		for all $n\in\N$.
		Second, we derive some pointwise inequalities.
		For one, use the elementary inequality \eqref{eqn:ineq-1} to get
		\begin{equation}\label{eqn:moment-bdd-2}
			\begin{split}
				d^p(x,y)-d^p(o,y) &\ge c_p^{-1}d^p(x,o) - 2d^p(o,y) \\
				&\ge \frac{d^p(x,o)}{2^{p-1}} - 2d^p(o,y) \\
			\end{split}
		\end{equation}
		for all $x,y\in X$ and $n\in\N$.
		For another, use \eqref{eqn:renorm} then \eqref{eqn:ineq-1} to get
		\begin{equation*}
			\begin{split}
				&|d^p(x,y)-d^p(o,y)| \\
				& \le pd(x,o)(d^{p-1}(x,y)+d^{p-1}(o,y)) \\
				& \le pd(x,o)(c_{p-1}(d^{p-1}(x,o)+d^{p-1}(o,y))+d^{p-1}(o,y)) \\
				&= pc_{p-1}d^p(x,o)+p(1+c_{p-1})d(x,o)d^{p-1}(o,y) \\
				&\leq p2^{p-1}d^p(x,o)+p2^pd(x,o)d^{p-1}(o,y)
			\end{split}
		\end{equation*}
		hence
		\begin{equation}\label{eqn:moment-bdd-1}
			d^p(x,y)-d^p(o,y) \ge -p2^{p-1}d^p(x,o)-p2^pd(x,o)d^{p-1}(o,y)
		\end{equation}
		for all $x,y\in X$.
		
		Now we claim that we have $x_n\in B_{r}(o)$ for all $n\in\N$, where
		\begin{equation*}
			r:=2\left(2^{p+1}L^p + \sup_{n\in\N}\varepsilon_n+1\right).
		\end{equation*}
		To see this, assume for the sake of contradiction that some $N\in\N$ has $d(x_N,o) > r$.
		Then we can use \eqref{eqn:moment-bdd-2} and \eqref{eqn:moment-bdd-3} to get
		\begin{align*}
			\int_{B^{\circ}_{L}(o)}&(d^p(x_{N},y)-d^p(o,y))\, d\mu_{N}(y) \\
			&\geq \left(\frac{d^p(x_{N},o)}{2^{p-1}} - 2L^p \right)\mu_{N}(B_{L}^{\circ}(o)) \\
			&\geq \left(\frac{d^p(x_{N},o)}{2^{p-1}} - 2L^p \right)\frac{1}{2} \\
			&=\frac{d^p(x_{N},o)}{2^{p}} - L^p,
		\end{align*}
		where in the first inequality we used $d(x_{N},o) \ge 2^{p+1}L^p$ to see that the integrand is non-negative.
		Also, we can use \eqref{eqn:moment-bdd-1} and \eqref{eqn:moment-bdd-4} to compute
		\begin{align*}
			\int_{X\setminus B^{\circ}_{L}(o)}&(d^p(x_{N},y)-d^p(o,y))\, d\mu_{N}(y) \\
			&\ge -p 2^{p-1} d^p(x_{N},o)\mu_N(X\setminus B_{L}^{\circ}(o)) \\
			&\qquad - p2^p d(x_{N},o) \int_{X\setminus B_{L}(o)}d^{p-1}(o,y)\,d\mu_{N}(y) \\
			&\geq  -\frac{d^p(x_{N},o)}{2^{p+1}} - \frac{d(x_{N},o)}{2^{p+2}}.
		\end{align*}
		By combining the previous two displays, we get
		\begin{align*}
			W_p(\mu_{N},x_{N},o) &= \int_{B^{\circ}_L(o)}(d^p(x_{N},y)-d^p(o,y))\, d\mu_{N}(y) \\
			&\qquad+ \int_{X\setminus B^{\circ}_{L}(o)}(d^p(x_{N},y)-d^p(o,y))\, d\mu_{N}(y) \\
			&\ge\frac{d^p(x_{N},o)}{2^{p}} - L^p   -\frac{d^p(x_{N},o)}{2^{p+1}} - \frac{d(x_{N},o)}{2^{p+2}} \\
			&=\frac{1}{2^{p+1}}\left(d^p(x_{N},o) - \frac{1}{2}d(x_{N},o) - 2^{p+1}L^p \right).
		\end{align*}
		Thus, this implies
		\begin{equation}\label{eqn:bdd-nonneg}
			d^p(x_{N},o) - \frac{1}{2}d(x_{N},o) - 2^{p+1}L^p \le \varepsilon_N \le \sup_{n\in\N}\varepsilon_n
		\end{equation}
		since we have $x_N\in M_p(\mu_N;\varepsilon_N)$ be assumption.
		
		Finally, we show that this gives a contradiction.
		To do this, we claim that $u\ge 0$ satisfying $u^p-\frac{1}{2}u-c\le 0$ implies $u\le 2(c+1)$.
		Indeed, the function $f(u):= u^p-\frac{1}{2}u-c$ is convex and hence is lower bounded by each of its linear approximations $\ell$; by taking the linear approximation at the point $u_p:=p^{-1/(p-1)}$ and using $\{u\ge 0: f(u)\le 0\}\subseteq \{u\ge 0: \ell(u) \le 0\}$, we see that $f(u)\le 0$ implies
		\begin{equation*}
			u \le  u_p-2f(u_p) \le  p^{-\frac{1}{p-1}}-2\left(p^{-\frac{p}{p-1}} - \frac{1}{2}p^{-\frac{1}{p-1}}+2c\right) \le 2\left(c+p^{-\frac{1}{p-1}}\right).
		\end{equation*}
		Since $p^{-1/(p-1)}\le 1$ for all $p\ge 1$, this shows $u\le 2(c+1)$ as claimed.
		By combining this with \eqref{eqn:bdd-nonneg}, we get $d(x_N,o) \le r$ which contradicts our assumption.
		This shows that the result holds with $B:=B_r(o)$.
	\end{proof}
	
	\begin{corollary}\label{cor:closed-bdd}
		For any metric space $(X,d)$, any $p\ge 1$, and any $\mu\in \Pcal_{p-1}(X)$, the set $M_p(\mu)\subseteq X$ is closed and bounded.
	\end{corollary}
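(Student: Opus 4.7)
The statement has two parts, and both should follow almost immediately from results already in hand.

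For boundedness, the plan is to simply invoke Lemma~\ref{lem:obj-bdd} with the constant sequence $\mu_n := \mu$ (which trivially converges to $\mu$ in $\weak^{p-1}$) and with $\varepsilon_n := 0$. Since $\obj_p(\mu;0) = \obj_p(\mu)$ by definition, this immediately produces a $d$-bounded set containing $\obj_p(\mu)$.

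For closedness, the plan is to write $\obj_p(\mu)$ as an intersection of closed sets. Namely, fix $o \in X$ and note that
\begin{equation*}
\obj_p(\mu) = \bigcap_{x' \in X}\bigl\{x \in X : W_p(\mu,x,x') \le 0\bigr\},
\end{equation*}
so it suffices to show that for each fixed $x' \in X$ the map $x \mapsto W_p(\mu,x,x')$ is $d$-continuous on $X$. For this, suppose $x_n \to x$ in $d$. Inequality~\eqref{eqn:renorm} gives the pointwise convergence $d^p(x_n,y) - d^p(x',y) \to d^p(x,y) - d^p(x',y)$ for every $y \in X$, and it also provides the domination
\begin{equation*}
|d^p(x_n,y) - d^p(x',y)| \le p\,d(x_n,x')\bigl(d^{p-1}(x_n,y) + d^{p-1}(x',y)\bigr).
\end{equation*}
Since $\{d(x_n,x')\}_{n\in\N}$ is bounded and since \eqref{eqn:ineq-1} yields
\begin{equation*}
d^{p-1}(x_n,y) \le c_{p-1}\bigl(d^{p-1}(x_n,o) + d^{p-1}(o,y)\bigr),
\end{equation*}
the integrands are dominated by a function of the form $C(1 + d^{p-1}(o,y))$ for a finite constant $C$ independent of $n$. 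This dominating function is integrable against $\mu$ precisely because $\mu \in \Pcal_{p-1}(X)$, so dominated convergence yields $W_p(\mu,x_n,x') \to W_p(\mu,x,x')$, proving continuity.

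There is no real obstacle here; the boundedness is immediate from the preceding lemma, and the closedness reduces to a routine dominated convergence argument whose dominating function is furnished by the moment assumption $\mu \in \Pcal_{p-1}(X)$ together with the elementary inequalities \eqref{eqn:ineq-1} and \eqref{eqn:renorm}.
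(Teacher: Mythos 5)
Your proposal is correct and follows essentially the same route as the paper: boundedness via Lemma~\ref{lem:obj-bdd} applied to the constant sequence, and closedness by writing $\obj_p(\mu)$ as an intersection of sublevel sets of $x\mapsto W_p(\mu,x,x')$. The only difference is that the paper cites an external reference for the continuity of $W_p$ in its second argument, whereas you supply the routine dominated convergence argument inline (which is fine, and works for $d^{p-1}(x',y)$ by the same bound you give for $d^{p-1}(x_n,y)$).
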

	
	\begin{proof}
		By definition we have
		\begin{equation*}
			\obj_p(\mu) = \bigcap_{x'\in X}\{x\in X: W_p(\mu,x,x')\le 0\}.
		\end{equation*}
		From the continuity of $W_p$ \cite[Lemma~2.7]{EvansJaffeFrechet} we see that $\{x\in X: W_p(\mu,x,x')\le 0\}$ is closed for each $x'\in X$, so it follows that $\obj_p(\mu)$, being an intersection of closed sets, is closed.
		Also, from Lemma~\ref{lem:obj-bdd} we see that $\obj_p(\mu)$ is bounded.
	\end{proof}
	
	The natural next step in many existing works is to simply assume that $(X,d)$ is a Heine-Borel space so that Corollary~\ref{cor:closed-bdd} would immediately imply that $\obj_p$ is compact.
	In this work we will take a different route; instead, we will deduce compactness from our main result, which we prove now:
	
	\begin{theorem}\label{thm:main}
		Consider any separable metric space $(X,d)$ admitting a weak convergence and any $p\ge 1$, suppose that $\{\mu_n\}_{n\in\N}$ and $\mu$ in $\Pcal_{p-1}(X)$ have $\mu_n\to\mu$ in $\weak^{p-1}$, and suppose that non-negative real numbers $\{\varepsilon_n\}_{n\in\N}$ satisfy $\varepsilon_n\to 0$.
		Then, for any $\{x_n\}_{n\in\N}$ in $X$ with $x_n\in \obj_p(\mu_n;\varepsilon_n)$ for all $n\in\N$, the sequence $\{x_n\}_{n\in\N}$ is relatively $d$-compact and any subsequential limit thereof must lie in $\obj_p(\mu)$.
	\end{theorem}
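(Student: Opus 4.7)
The plan is to execute the four-step strategy (S0)--(S3) outlined right after the statement of the theorem. Fix a sequence $\{x_n\}_{n\in\N}$ with $x_n\in\obj_p(\mu_n;\varepsilon_n)$, so that by the equivalent characterization given just before Lemma~\ref{lem:obj-bdd} we have $W_p(\mu_n,x_n,z)\le\varepsilon_n$ for every $z\in X$. Step (S0), the $d$-boundedness of $\{x_n\}$, follows directly from Lemma~\ref{lem:obj-bdd} since $\{\varepsilon_n\}_{n\in\N}$ is bounded; step (S1), the existence of a subsequence (which I will not relabel) and a point $x\in X$ with $x_n\to x$ in $w$, then follows from axiom (W1).

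For (S2) the goal is to verify $W_p(\mu,x,z)\le 0$ for every $z\in X$, which by the optimality bound above reduces to the Fatou-type inequality
\[W_p(\mu,x,z)\le\liminf_{n\to\infty}W_p(\mu_n,x_n,z).\]
Pointwise in $y$, axiom (W2) supplies the lower-semicontinuity $d^p(x,y)-d^p(z,y)\le\liminf_n(d^p(x_n,y)-d^p(z,y))$, while inequality \eqref{eqn:renorm} combined with the $d$-boundedness of $\{x_n\}$ from (S0) controls the integrands uniformly by a constant multiple of $1+d^{p-1}(z,y)$; the hypothesis $\mu_n\to\mu$ in $\weak^{p-1}$ together with the tail estimates of Lemma~\ref{lem:technical-1} and Lemma~\ref{lem:technical-2} gives uniform control of the tails of these integrals, so that a truncation to a large ball reduces matters to the bounded-continuous case of $\weak$-convergence and yields the displayed inequality. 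I expect this step to be the main obstacle, since it must combine three distinct kinds of variation (in the points $x_n$, in the measures $\mu_n$, and under only $(p-1)$ finite moments) within a single Fatou-type argument for varying measures.

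For (S3), applying the liminf inequality of (S2) with $z=x$ and using the already-established upper bound $W_p(\mu_n,x_n,x)\le\varepsilon_n$ together with $W_p(\mu,x,x)=0$ forces $W_p(\mu_n,x_n,x)\to 0$. Since the integrand $d^p(x_n,y)-d^p(x,y)$ has non-negative pointwise liminf by (W2) and is bounded below by an integrable control coming from \eqref{eqn:renorm}, the resulting equality in the Fatou bound, combined with the uniform integrability supplied by $\weak^{p-1}$-convergence, forces $d(x_n,y)\to d(x,y)$ for $\mu$-almost every $y$ along a further subsequence. Selecting any one such $y$ verifies the hypothesis of (W3), yielding $x_n\to x$ in $d$; the standard subsequence-of-subsequence argument then promotes this to convergence of the original relatively compact sequence in $d$, completing the proof.
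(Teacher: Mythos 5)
Your proof follows the same architecture as the paper's own argument: (S0) via Lemma~\ref{lem:obj-bdd}, (S1) via (W1), (S2) via a Fatou-type inequality resting on (W2), and (S3) via (W3) applied to a well-chosen reference point $y$. Steps (S0) and (S1) are exactly right. For (S2), the paper decouples the two sources of variation that you propose to treat in a single joint Fatou argument: it first applies Fatou's lemma with the \emph{fixed} measure $\mu$ (using the uniform $\mu$-integrable lower bound you correctly identify from \eqref{eqn:renorm} and the boundedness of $\{x_n\}_{n\in\N}$) to obtain $W_p(\mu,x,o)\le\liminf_k W_p(\mu,x_{n_k},o)$, and only then replaces $\mu$ by $\mu_{n_k}$ using the continuity of $W_p$ in its measure argument, uniformly over $d$-bounded sets of points. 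Your truncation route can be made to work, but it is more delicate than your sketch suggests, since the integrands converge only in the pointwise-liminf sense while the measures vary; one needs, e.g., the monotone approximations $\inf_{n\ge m}(d^p(x_n,\cdot)-d^p(z,\cdot))$ restricted to a continuous cutoff rather than a naive reduction to ``the bounded-continuous case''. Either way, this step goes through.

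The genuine gap is in (S3). The Fatou equality yields only $\liminf_{n} d(x_n,y)=d(x,y)$ for $\mu$-almost every $y$ --- a liminf, not a limit --- so the further subsequence along which $d(x_n,y)$ actually converges to $d(x,y)$ depends on the chosen $y$, and, symmetrically, if one first fixes a subsequence then the full-measure set of good $y$'s depends on that subsequence. Your phrase ``forces $d(x_n,y)\to d(x,y)$ for $\mu$-almost every $y$ along a further subsequence'' conceals this quantifier dependence, and the closing ``standard subsequence-of-subsequence argument'' cannot be applied as stated: applied to the full sequence it would prove $d$-convergence of $\{x_n\}_{n\in\N}$ itself, which is false in general because $\obj_p(\mu)$ need not be a singleton. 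Two repairs are available. One is to rerun the entire Fatou-equality argument afresh for \emph{every} subsequence of the $w$-convergent subsequence, pick for each its own good point $y$ and a further subsequence on which (W3) applies, and note that all resulting $d$-limits must coincide with the common $w$-limit $x$. The paper instead manufactures a single universal $y$: for each subsequence $K=\{k_j\}_{j\in\N}$ it forms the set $A_K:=\{y\in X:\liminf_{j\to\infty}d(x_{n_{k_j}},y)=d(x,y)\}$, shows each $A_K$ is $d$-closed with $\mu(A_K)=1$, and uses separability (each closed full-measure set contains $\supp\mu$) to conclude $\mu\bigl(\bigcap_K A_K\bigr)=1$; any $y$ in this intersection upgrades the liminf to a genuine limit along the whole $w$-convergent subsequence, so (W3) is invoked once. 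Without one of these devices your proof of relative $d$-compactness is incomplete, and this is precisely the point where the separability hypothesis enters the paper's argument.
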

	
	\begin{proof}
		Let $\{x_n\}_{n\in\N}$ be any sequence in $X$ with $x_n\in \obj_p(\mu_n;\varepsilon_n)$ for all $n\in\N$.
		Then, use Lemma~\ref{lem:obj-bdd} to get that $\{x_n\}_{n\in\N}$ is bounded.
		Now let $w$ denote a weak convergence $(X,d)$, and use (W1) to get that there exists a subsequence $\{n_k\}_{k\in\N}$ and a point $x\in X$ with $x_{n_k}\to x$ in $w$.
		It only remains to show that $x\in \obj_{p}(\mu)$ and that we have $x_{n_k}\to x$ in $d$.
		
		First let us show $x\in \obj_{p}(\mu)$, and we begin by fixing an arbitrary $o\in X$.
		Observe that by \eqref{eqn:renorm} and \eqref{eqn:ineq-1}, we can bound, for all $y\in X$ and $k\in\N$:
		\begin{align*}
			&d^p(x_{n_k},y)-d^p(o,y) \\
			&\ge -pd(x_{n_k},o)(d^{p-1}(x_{n_k},y)+d^{p-1}(o,y)) \\
			&\ge -pd(x_{n_k},o)(c_{p-1}(d^{p-1}(x_{n_k},o)+d^{p-1}(o,y))+d^{p-1}(o,y)) \\
			&\ge -pc_{p-1}d^p(x_{n_k},o) - p(c_{p-1}+1)d^{p-1}(o,y).
		\end{align*}
		Since $\{x_n\}_{n\in\N}$ is bounded and $\mu\in \Pcal_{p-1}(X)$, this implies that the functions $y\mapsto d^p(x_{n_k},y)-d^p(o,y)$ posess a $\mu$-integrable lower bound, uniformly in $k\in\N$.
		This means we can apply (W2) and Fatou's lemma, and the continuity of $W_p$ \cite[Lemma~2.7]{EvansJaffeFrechet}, to get, for arbitrary $x'\in X$:
		\begin{equation}\label{eqn:large-ineq}
			\begin{split}
				W_p(\mu,x,o) &= \int_{X}(d^p(x,y)-d^p(o,y))\diff \mu(y) \\
				&\le \int_{X}\liminf_{k\to\infty}(d^p(x_{n_k},y)-d^p(o,y))\diff \mu(y) \\
				&\le \liminf_{k\to\infty}\int_{X}(d^p(x_{n_k},y)-d^p(o,y))\diff \mu(y) \\
				&= \liminf_{k\to\infty}W_p(\mu,x_{n_k},o) \\
				&= \liminf_{k\to\infty}W_p(\mu_{n_k},x_{n_k},o) \\
				&\le \liminf_{k\to\infty}(W_p(\mu_{n_k},x',o) + \varepsilon_{n_k} )\\
				&= W_p(\mu, x', o)
			\end{split}
		\end{equation}
		In particular, by taking the infimum over all $x'\in X$, we find $x\in \obj_p(\mu)$.
		
		Towards showing $x_{n_k}\to x$ in $d$, we now make a short digression.
		For each subsequence $K = \{k_j\}_{j\in\N}$, we consider the set
		\begin{equation*}
			A_K := \left\{y\in X: \liminf_{j\to\infty} d(x_{n_{k_j}},y) = d(x,y) \right\}.
		\end{equation*}
		It is straightforward to see that each $A_K$ is a $d$-closed subset of $X$:
		If $\{y_\ell\}_{\ell\in\N}$ in $A_K$ have $y_\ell\to y$ in $d$ for some $y\in X$, then by the triangle inequality, we have
		\begin{align*}
			&\left|\liminf_{j\to\infty}d(x_{n_{k_j}},y)-d(x,y)\right|  \\
			&= 		\left|\liminf_{j\to\infty}d(x_{n_{k_j}},y)-\liminf_{j\to\infty}d(x_{n_{k_j}},y_\ell)+d(x,y_\ell)-d(x,y)\right|  \\
			&\le	\limsup_{j\to\infty}\left|d(x_{n_{k_j}},y)-d(x_{n_{k_j}},y_\ell)\right|+\left|d(x,y_\ell)-d(x,y)\right|  \\
			&\le 2d(y_\ell, y).
		\end{align*}
		Taking $\ell\to\infty$, we get $y\in A_K$ as needed.
		Moreover, we claim that each $A_K$ satisfies $\mu(A_K) = 1$.
		To show this, we use an argument identical to \eqref{eqn:large-ineq} above to get
		\begin{equation*}
			\begin{split}
				W_p(\mu,x,o) &= \int_{X}(d^p(x,y)-d^p(o,y))\diff \mu(y) \\
				&\le \int_{X}\liminf_{j\to\infty}(d^p(x_{n_{k_j}},y)-d^p(o,y))\diff \mu(y) \le \val_p(\mu,o).
			\end{split}
		\end{equation*}
		Since $\val_p(\mu,o)$ is defined as a minimum, this implies that the inequalities in the preceding display are actually equalities.
		In particular, we have
		\begin{equation*}
			\int_{X}(d^p(x,y)-d^p(o,y))\diff\mu(y) = \int_{X}\liminf_{j\to\infty}(d^p(x_{k_j},y)-d^p(o,y))\diff\mu(y).
		\end{equation*}
		Combining this with (W2) shows
		\begin{equation*}
			d^p(x,y)-d^p(o,y) = \liminf_{j\to\infty}(d^p(x_{k_j},y)-d^p(o,y))
		\end{equation*}
		for $\mu$-almost all $y\in X$, so rearranging gives $\mu(A_K) = 1$.
		
		Returning to the main proof, we observe that $\mu$ is a Borel measure on a second-countable topological space and that $\{A_K\}_K$ is an (arbitrary) intersection of closed sets of full $\mu$-measure.
		Consequently, the set $A:= \bigcap_{K}A_K$ satisfies $\mu(A) = 1$.
		Since $\mu(A) = 1$ implies that $A$ is non-empty, we can select an arbitrary $y\in A$ and we can then select a subsequence $\{k_j\}_{j\in\N}$ such that
		\begin{equation*}
			\limsup_{k\to\infty}d(x_{n_k},y) = \lim_{j\to\infty}d(x_{n_{k_j}},y).
		\end{equation*}
		Finally, use $y\in A$ and (W2) to get:
		\begin{align*}
			\limsup_{k\to\infty}d(x_{n_k},y) &= \lim_{j\to\infty}d(x_{n_{k_j}},y) \\
			&= \liminf_{j\to\infty}d(x_{n_{k_j}},y) \\
			&= d(x,y) \\
			&\le \liminf_{k\to\infty}d(x_{n_k},y).
		\end{align*}
		In other words, we have shown $d(x_{n_k},y)\to d(x,y)$ as $k\to\infty$.
		Therefore, (W3) implies $x_{n_k}\to x$ in $d$.
		This completes the proof.
	\end{proof}
	
	\begin{remark}\label{rem:thm-without-W3}
		Observe that (W3) was only used in the very last step of the proof of Theorem~\ref{thm:main}.
		Consequently the following conclusion holds if $(X,d)$ is a separable metric space admitting a convergence $w$ satisfying (W1) and (W2):
		If $\mu_n\to \mu$ in $\weak^{p-1}$ and $x_n\in \obj_p(\mu_n;\varepsilon_n)$ for all $n\in\N$, then $\{x_n\}_{n\in\N}$ is relatively $w$-compact and any subsequential limit thereof must lie in $M_p(\mu)$.
		In particular, Fr\'echet mean sets are non-empty, provided that there exists a convergence satisfying (W1) and (W2).
		We believe that weaker result may still be useful in some applications.
	\end{remark}
	
	Now we have some consequences of this main result.
	The statement of Theorem~\ref{thm:main} is given in the form of $\Gamma$-convergence, but it is often useful to use some of its more concrete consequences, as in the following.
	
	\begin{corollary}\label{cor:main-cpt}
		Consider any separable metric space $(X,d)$ admitting a weak convergence, and any $p\ge 1$
		For any $\mu\in \Pcal_{p-1}(X)$, the set $\obj_p(\mu)$ is non-empty and $d$-compact.
	\end{corollary}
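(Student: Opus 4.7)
The plan is to derive both conclusions as direct specializations of Theorem~\ref{thm:main}, taking the constant sequence $\mu_n = \mu$ and choosing the tolerance sequence $\{\varepsilon_n\}_{n\in\N}$ appropriately.

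For non-emptiness, I would first observe that, by the very definition of $V_p(\mu,o)$ as an infimum, the relaxed set $M_p(\mu;\varepsilon)$ is non-empty for every $\varepsilon>0$. Hence I can pick $x_n \in M_p(\mu;1/n)$ for each $n\in\N$. Applying Theorem~\ref{thm:main} with $\mu_n := \mu$ and $\varepsilon_n := 1/n \to 0$ shows that the sequence $\{x_n\}_{n\in\N}$ is relatively $d$-compact and every subsequential $d$-limit lies in $M_p(\mu)$. Since relative compactness provides at least one convergent subsequence, $M_p(\mu)$ contains that limit and is therefore non-empty.

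For compactness, I would use the fact that, in a metric space, compactness is equivalent to sequential compactness. So let $\{x_n\}_{n\in\N}$ be an arbitrary sequence in $M_p(\mu)$, and apply Theorem~\ref{thm:main} with $\mu_n := \mu$ and $\varepsilon_n := 0$ for all $n$, noting that $M_p(\mu) = M_p(\mu;0)$. The theorem yields that $\{x_n\}_{n\in\N}$ has a $d$-convergent subsequence whose limit lies in $M_p(\mu)$. This is precisely sequential compactness of $M_p(\mu)$, and hence $d$-compactness.

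I do not anticipate a genuine obstacle here, since the heavy lifting has already been carried out in Theorem~\ref{thm:main}; the proof is essentially a two-line specialization. The only minor point to verify carefully is that the hypothesis $\mu_n \to \mu$ in $\weak^{p-1}$ is trivially satisfied by a constant sequence, and that $M_p(\mu;\varepsilon) \ne \emptyset$ for $\varepsilon>0$ is immediate from the definition of infimum. Closedness of $M_p(\mu)$, which was already established in Corollary~\ref{cor:closed-bdd}, is subsumed by the compactness conclusion but could be cited as a sanity check.
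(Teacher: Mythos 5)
Your proof is correct and follows essentially the same route as the paper: non-emptiness via the relaxed sets $M_p(\mu;1/n)$ and Theorem~\ref{thm:main} with $\varepsilon_n = 1/n$, and compactness via sequential compactness with $\varepsilon_n = 0$, both with the constant sequence $\mu_n = \mu$. No gaps.
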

	
	\begin{proof}
		To see that $M_p(\mu)$ is non-empty, set $\varepsilon_n:=1/n$ and $\mu_n:=\mu$ for $n\in\N$, and let $\{x_n\}_{n\in\N}$ be an arbitrary sequence in $\obj_p(\mu;\varepsilon_n)$.
		Theorem~\ref{thm:main} yields some $\{n_k\}_{k\in\N}$ and $x\in \obj_p(\mu)$ with $x_{n_k}\to x$ in $d$, so of course $M_p(\mu)$ is non-empty.
		To see that $M_p(\mu)$ is $d$-compact, set $\varepsilon_n:=0$ and $\mu_n:=\mu$ for $n\in\N$, and let $\{x_n\}_{n\in\N}$ be an arbitrary sequence in $\obj_p(\mu)$.
		Then Theorem~\ref{thm:main} yields some $\{n_k\}_{k\in\N}$ and $x\in \obj_p(\mu)$ with $x_{n_k}\to x$ in $d$, so $M_p(\mu)$ is sequentially compact.
		Since $M_p(\mu)$ is a subset of a metric space, this implies that it is compact.
	\end{proof}
	
	\begin{corollary}\label{cor:main-cty}
		Consider any separable metric space $(X,d)$ admitting a weak convergence, and any $p \ge 1$.
		If $\{\mu_n\}_{n\in\N}$ and $\mu$ in $\Pcal_{p-1}(X)$ have $\mu_n\to\mu$ in $\weak^{p-1}$, then
		\begin{equation*}
			\max_{x_n\in \obj_p(\mu_n)}\min_{x\in \obj_p(\mu)}d(x_n,x)\to 0
		\end{equation*}
		as $n\to\infty$.
	\end{corollary}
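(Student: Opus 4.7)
The plan is to deduce the Corollary from Theorem~\ref{thm:main} together with Corollary~\ref{cor:main-cpt} by a standard compactness/contradiction argument. First I would verify that the quantity on the left-hand side is well-defined for each $n$: by Corollary~\ref{cor:main-cpt}, both $M_p(\mu_n)$ and $M_p(\mu)$ are non-empty and $d$-compact, and the map $x_n \mapsto \min_{x\in M_p(\mu)} d(x_n,x)$ is $1$-Lipschitz (hence continuous), so the inner $\min$ is attained and the outer $\max$ over the compact set $M_p(\mu_n)$ is attained as well.

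Next, suppose for contradiction that the conclusion fails. Then there exist $\varepsilon > 0$, a subsequence $\{n_k\}_{k\in\N}$, and points $x_{n_k} \in M_p(\mu_{n_k})$ with
\begin{equation*}
\min_{x\in M_p(\mu)} d(x_{n_k},x) \ge \varepsilon \qquad \text{for all } k\in\N.
\end{equation*}
Taking $\varepsilon_n := 0$ in Theorem~\ref{thm:main} (note $x_{n_k}\in M_p(\mu_{n_k};0) = M_p(\mu_{n_k})$), the sequence $\{x_{n_k}\}_{k\in\N}$ is relatively $d$-compact, and every subsequential $d$-limit lies in $M_p(\mu)$. Extract a further subsequence $\{k_j\}_{j\in\N}$ and a point $x^{\ast}\in M_p(\mu)$ with $x_{n_{k_j}}\to x^{\ast}$ in $d$.

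Now the punchline: since $x^{\ast}\in M_p(\mu)$, we have
\begin{equation*}
\min_{x\in M_p(\mu)} d(x_{n_{k_j}}, x) \le d(x_{n_{k_j}}, x^{\ast}) \to 0
\end{equation*}
as $j\to\infty$, which directly contradicts the lower bound $\varepsilon > 0$ from the preceding display. This contradiction forces the desired convergence. There is no real obstacle here; the whole content sits in Theorem~\ref{thm:main}, and the only subtle point worth stating carefully is that the outer $\max$ is attained (using compactness of $M_p(\mu_n)$ from Corollary~\ref{cor:main-cpt}), which is what legitimizes the selection of the almost-maximizers $x_{n_k}$.
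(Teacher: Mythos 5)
Your proof is correct and follows essentially the same route as the paper: both reduce the claim to Theorem~\ref{thm:main} applied to a selected sequence $x_{n_k}\in \obj_p(\mu_{n_k})$, using the compactness from Corollary~\ref{cor:main-cpt} to justify the selection. The only cosmetic difference is that you phrase it as a contradiction with points where the inner $\min$ exceeds $\varepsilon$, whereas the paper phrases it as ``every subsequence admits a further subsequence converging to $0$'' applied to the maximizers; these are logically interchangeable.
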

	
	\begin{proof}
		In order to show that
		\begin{equation*}
			\max_{x_n\in \obj_p(\mu_n)}\min_{x\in \obj_p(\mu)}d(x_n,x)
		\end{equation*}
		converges to 0, it suffices to show that every subsequence admits a further subsequence converging to 0.
		So, we let $\{n_k\}_{k\in\N}$ be arbitrary, and we use the compactness of Corollary~\ref{cor:main-cpt} and the continuity of $d$ to get, for each $k\in\N$, a point $x_k\in \obj_p(\mu_{n_k})$ with
		\begin{equation*}
			\min_{x\in\obj_p(\mu)}d(x_k,x) = \max_{x_{n_k}\in \obj_p(\mu_{n_k})}\min_{x\in \obj_p(\mu)}d(x_{n_k},x).
		\end{equation*}
		Now we use Theorem~\ref{thm:main} with $\varepsilon_n:= 0$ for all $n\in\N$ to get a subsequence $\{k_j\}_{j\in\N}$ and a point $x'\in \obj_p(\mu)$ with $x_{k_j}\to x'$ in $d$.
		By construction we have
		\begin{equation*}
			\max_{x_{n_k}\in \obj_p(\mu_{n_k})}\min_{x\in \obj_p(\mu)}d(x_{n_k},x) = \min_{x\in\obj_p(\mu)}d(x_k,x) \le d(x_{k_j},x')\to 0,
		\end{equation*}
		which proves the claim.
	\end{proof}
	
	Lastly, we show that these results imply a bona fide continuity result for the Fr\'echet mean, interpreted as a set-valued function.
	To state this, we write $\cpt(X)$ for the set of non-empty compact subsets of $X$.
	(We emphasize that these are $d$-compact, not $w$-compact.)
	Then, we write
	\begin{equation*}
		\dvechaus(S,S'):= \max_{x\in S}\min_{x'\in S'}d(x,x')
	\end{equation*}
	for the \textit{one-sided Hausdorff distance} between $S,S'\in\cpt(X)$.
	Although $\dvechaus$ is not a bona fide metric, it has many useful properties which can be easily shown.
	For example, $S,S'\in\cpt(X)$ have $\dvechaus(S,S') = 0$ if and only if $S\subseteq S'$, and all $S,S',S''\in\cpt(X)$ satisfy $\dvechaus(S,S'') \le \dvechaus(S,S') + \dvechaus(S',S'')$.
	Towards constructing a topology on $\cpt(X)$ induced by $\dvechaus$, we define $\vec B_r(S):=\{S'\in \cpt(X): \dvechaus(S',S)<r\}$, for each $S\in\cpt(X)$ and $r\ge0$, which we think of as the $\dvechaus$-ball of radius $r$ centered at $S$.
	Then we have the following:
	
	\begin{lemma}\label{lem:basis}
		For any metric space $(X,d)$, the collection $\{\vec B_r(S):S\in\cpt(X),r\ge 0\}$ is a basis on $\cpt(X)$.
	\end{lemma}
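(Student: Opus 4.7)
The plan is to verify directly the two defining axioms of a basis for a topology on the set $\cpt(X)$: namely, that the proposed collection covers $\cpt(X)$, and that whenever a point lies in the intersection of two basis elements, it has a basis neighborhood contained in that intersection. Both verifications will rely only on the two properties of $\dvechaus$ that were highlighted in the paragraph preceding the lemma, namely the triangle inequality $\dvechaus(S,S'')\le \dvechaus(S,S')+\dvechaus(S',S'')$ and the fact that $\dvechaus(S,S)=0$ for every $S\in\cpt(X)$ (this last since each $x\in S$ trivially satisfies $\min_{x'\in S}d(x,x')=0$).

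First I would verify the covering axiom. Since $\dvechaus(S,S)=0<1$, every $S\in\cpt(X)$ lies in $\vec B_{1}(S)$, which establishes
\[
\bigcup\{\vec B_r(S):S\in\cpt(X),\,r\ge0\} = \cpt(X).
\]

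Second I would verify the intersection axiom. Fix arbitrary $S_1,S_2\in\cpt(X)$ and $r_1,r_2\ge 0$, and suppose $S\in \vec B_{r_1}(S_1)\cap \vec B_{r_2}(S_2)$ (which already forces $r_1,r_2>0$). Setting
\[
r := \min\{r_1-\dvechaus(S,S_1),\,r_2-\dvechaus(S,S_2)\},
\]
we have $r>0$ by hypothesis and $S\in \vec B_r(S)$ since $\dvechaus(S,S)=0<r$. For any $S'\in \vec B_r(S)$ and each $i\in\{1,2\}$, the triangle inequality yields
\[
\dvechaus(S',S_i)\le \dvechaus(S',S)+\dvechaus(S,S_i) < r+\dvechaus(S,S_i)\le r_i,
\]
so $S'\in \vec B_{r_i}(S_i)$. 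Hence $\vec B_r(S)\subseteq \vec B_{r_1}(S_1)\cap \vec B_{r_2}(S_2)$, as required.

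There is no serious obstacle here; the argument is the standard metric-ball basis argument, and the asymmetry of $\dvechaus$ (recall that $\dvechaus(S,S')=0$ only implies $S\subseteq S'$, not $S=S'$) is harmless because the proof never uses symmetry, only the triangle inequality and the identity $\dvechaus(S,S)=0$. These are precisely the ingredients highlighted by the authors before the lemma statement.
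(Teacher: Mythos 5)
Your proof is correct and follows essentially the same route as the paper's: the same choice of radius $r:=\min\{r_1-\dvechaus(S,S_1),\,r_2-\dvechaus(S,S_2)\}$ and the same application of the triangle inequality for $\dvechaus$. The only difference is that you also spell out the covering axiom via $\dvechaus(S,S)=0$, which the paper leaves implicit; that is a harmless (and slightly more complete) addition.
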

	
	\begin{proof}
		It suffices to show that if $S\in \vec B_{r_1}(S_1)\cap \vec B_{r_2}(S_2)$, then there exists $r\ge 0$ such that $S\in \vec B_r(S) \subseteq\vec B_{r_1}(S_1)\cap \vec B_{r_2}(S_2)$.
		Indeed, we have $\dvechaus(S,S_1)<r_1$ and $\dvechaus(S,S_2)<r_2$, so we set
		\begin{equation*}
			r:=\min\left\{r_1-\dvechaus(S,S_1),r_2-\dvechaus(S,S_2)\right\},
		\end{equation*}
		By construction we have $r>0$, so it follows that $S\in \vec B_r(S)$.
		To see that $\vec B_{r}(S)\subseteq \vec B_{r_1}(S_1)$, simply use the triangle inequality for $\dvechaus$ to see that $S\in \vec B_r(S)$ must satisfy
		\begin{align*}
			\dvechaus(S',S_1) &\le \dvechaus(S',S)+\dvechaus(S,S_1) \\
			&< r +\dvechaus(S,S_1) \\
			&\le r_1 -  \dvechaus(S,S_1) +\dvechaus(S,S_1) = r_1.
		\end{align*}
		This shows $\vec B_{r}(S)\subseteq \vec B_{r_1}(S_1)$, and of course the same argument shows $\vec B_{r}(S)\subseteq \vec B_{r_2}(S_2)$.
		We have now shown $S\in \vec B_r(S)\subseteq \vec B_{r_1}(S_1)\cap \vec B_{r_2}(S_2)$, so the result is proved.
	\end{proof}
	
	By the preceding result, there exists a topology on $\cpt(X)$ generated by $\{\vec B_r(S): S\in\cpt(X),r\ge 0\}$.
	Note, however, that this topology is quite degenerate; it is $T_0$ but not $T_1$.
	(In particular, it is not Hausdorff.)
	By a slight abuse of notation, we write $\dvechaus$ for this topology.
	At last we get the following bona fide continuity result.
	
	\begin{corollary}\label{cor:second-cty}
		If $(X,d)$ is a separable metric space admitting a weak convergence, then for all $p\ge 1$ the map $M_p:(\Pcal_{p-1}(X),\weak^{p-1})\to (\cpt(X),\dvechaus\,)$ is continuous.
	\end{corollary}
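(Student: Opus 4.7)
My plan is to reduce the statement to the one-sided Hausdorff estimate already established in Corollary~\ref{cor:main-cty}. Fix $\mu \in \Pcal_{p-1}(X)$; the map is well-defined at $\mu$ thanks to Corollary~\ref{cor:main-cpt}. To show continuity at $\mu$, I would let $\mathcal{U}$ be an arbitrary neighborhood of $\obj_p(\mu)$ in $(\cpt(X), \dvechaus)$ and first reduce to a basis element centered at $\obj_p(\mu)$ itself: by the definition of the topology on $\cpt(X)$ generated by the basis in Lemma~\ref{lem:basis}, one can find $S_0 \in \cpt(X)$ and $r_0 > 0$ with $\obj_p(\mu) \in \vec B_{r_0}(S_0) \subseteq \mathcal{U}$. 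Setting $r := r_0 - \dvechaus(\obj_p(\mu), S_0) > 0$ and rerunning the triangle-inequality calculation from the proof of Lemma~\ref{lem:basis}, I would obtain $\vec B_r(\obj_p(\mu)) \subseteq \vec B_{r_0}(S_0) \subseteq \mathcal{U}$. Thus it suffices to produce a $\weak^{p-1}$-neighborhood $\mathcal{V}$ of $\mu$ on which $\dvechaus(\obj_p(\nu), \obj_p(\mu)) < r$.

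For this step I would appeal to the fact that $\weak^{p-1}$ on $\Pcal_{p-1}(X)$ is metrizable when $(X,d)$ is separable (e.g.\ by the Prokhorov metric when $p = 1$, or by combining such a metric with the $(p-1)$th moment distance when $p > 1$); in particular it is first countable, so continuity at $\mu$ is equivalent to sequential continuity at $\mu$. That sequential continuity is exactly the content of Corollary~\ref{cor:main-cty}: every sequence $\mu_n \to \mu$ in $\weak^{p-1}$ satisfies $\dvechaus(\obj_p(\mu_n), \obj_p(\mu)) \to 0$, so $\{\nu \in \Pcal_{p-1}(X) : \dvechaus(\obj_p(\nu), \obj_p(\mu)) < r\}$ contains a $\weak^{p-1}$-neighborhood of $\mu$, which may be taken as $\mathcal{V}$.

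The only mildly delicate point is keeping track of the unilateral character of the target topology, which is only $T_0$ and not Hausdorff. However, this actually works in our favor: every open neighborhood of a set $S \in \cpt(X)$ contains a basis element of the form $\vec B_r(S)$ centered at $S$ itself, and the reduction in the first paragraph exploits exactly this feature. No other obstacle is expected, since all of the heavy analytic lifting has already been done in Theorem~\ref{thm:main} and its two corollaries.
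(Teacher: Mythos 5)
Your argument is correct and essentially identical to the paper's: both reduce, via Lemma~\ref{lem:basis} and the triangle inequality for $\dvechaus$, to producing a $\weak^{p-1}$-neighborhood of $\mu$ on which $\dvechaus(M_p(\nu),M_p(\mu))$ is small, and both extract that neighborhood from Corollary~\ref{cor:main-cty}. If anything, you are slightly more careful than the paper in justifying the passage from the sequential statement of Corollary~\ref{cor:main-cty} to a genuine neighborhood statement, by invoking metrizability (hence first countability) of $\weak^{p-1}$ on a separable space.
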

	
	\begin{proof}
		First note by Corollary~\ref{cor:main-cpt} that $M_p$ indeed takes values in $\cpt(X)$.
		Now use Lemma~\ref{lem:basis} to see that it suffices to show that $M_p^{-1}(\vec B_r(S))\subseteq \Pcal_{p-1}(X)$ is $\weak^{p-1}$-open for all $S\in\cpt(X)$ and $r\ge 0$.
		Indeed, suppose that we have $\mu\in M_p^{-1}(\vec B_r(S))$, which is equivalent to $\dvechaus(M_p(\mu),S) < r$.
		Now set $\varepsilon:=r-\dvechaus(M_p(\mu),S) >0$, and use Corollary~\ref{cor:main-cty} to get some $\weak^{p-1}$-open set $U\subseteq \Pcal_{p-1}(X)$ such that $\nu\in U$ implies $\dvechaus(M_p(\nu),M_p(\mu))<\varepsilon$.
		In particular, we can combine this with the triangle inequality for $\dvechaus$ to show:
		\begin{align*}
			\dvechaus(M_p(\nu),S) &\le \dvechaus(M_p(\nu),M_{p}(\mu))+\dvechaus(M_p(\mu),S) \\
			&< \varepsilon+\dvechaus(M_p(\mu),S) \\
			&= r - \dvechaus(M_p(\mu),S) +\dvechaus(M_p(\mu),S) = r.
		\end{align*}
		This shows $U\subseteq M_p^{-1}(\vec B_r(S))$ and hence that $M_p^{-1}(\vec B_r(S))$ is $\weak^{p-1}$-open.
		This completes the proof.
	\end{proof}
	
	In some cases, one has a subset $\mathcal{P}\subseteq\Pcal_{p-1}(X)$ such that $M_p(\mu)$ is unique for $\mu\in\Pcal$; in this setting, one can ignore all set-valued considerations and simply conclude that the map $M_p:(\Pcal,\weak^{p-1})\to (X,d)$ is continuous.
	(For example, if $(X,d)$ is a Hadamard space then one can take $\Pcal:=\Pcal_{p-1}(X)$, and if $(X,d)$ is the Wasserstein space $(\Pcal_q(\R^k,W_q)$ then one can take $\Pcal:=\Pcal_{p-1}(\Pcal_q^{\textnormal{ac}}(\R^k))$, the space of laws of random measures which have a density.)
	
	\section{Probabilistic Consequences}\label{sec:prob}
	
	This section contains the main probabilistic results of interest, and it proves them easily as a consequence of the main theorem above.
	Concretely, for suitable random variables taking values in a separable metric space admitting a weak convergence, we prove: a strong law of large numbers (Corollary~\ref{cor:SLLN}), a pointwise ergodic theorem (Corollary~\ref{cor:ergodic}), and a large deviations principle (Corollary~\ref{cor:LDP}).
	
	Throughout this section, we let $(X,d)$ denote a separable metric space admitting a weak convergence and we fix $p\ge 1$.
	
	\begin{corollary}[strong law of large numbers]\label{cor:SLLN}
		Suppose that $(\Omega,\F,\P)$ is a probability space, with expectation $\E$, supporting an independent, identically-distributed sequence $Y_1,Y_2,\ldots$ of $X$-valued random variables with common distribution $\mu$, and let $p\ge 1$.
		Also write $\bar \mu_n:=\frac{1}{n}\sum_{i=1}^{n}\delta_{Y_i}$ for the emirical meausure of the first $n\in\N$ points.
		If we have $\int_{X}d^{p-1}(x,y)\diff \mu(y) = \E[d^{p-1}(x,Y_1)]<\infty$ for some (equivalently, all) $x\in X$, then we have
		\begin{equation*}
			\max_{\bar x_n \in M_p(\bar \mu_n)}\min_{x\in M_p(\mu)}d(\bar x_n,x)\to 0
		\end{equation*}
		holding $\P$-almost surely.
	\end{corollary}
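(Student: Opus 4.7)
The plan is to reduce the probabilistic statement to the analytic continuity result already established. Concretely, one shows that the empirical measures $\bar\mu_n$ converge to $\mu$ in $\weak^{p-1}$ almost surely, and then invokes Corollary~\ref{cor:main-cty}.

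First I would fix a reference point $o\in X$ and work on the event where two classical facts hold simultaneously. The first is the Varadarajan theorem: since $(X,d)$ is separable, $\bar\mu_n\to\mu$ in the topology of weak convergence of probability measures $\weak$ with $\P$-probability one. The second is the ordinary scalar strong law of large numbers applied to the real-valued random variables $Z_i := d^{p-1}(o, Y_i)$, which are i.i.d.\ with common mean $\E[d^{p-1}(o,Y_1)] < \infty$ by hypothesis; this yields
\begin{equation*}
\int_X d^{p-1}(o,y)\diff\bar\mu_n(y) = \frac{1}{n}\sum_{i=1}^n Z_i \longrightarrow \E[Z_1] = \int_X d^{p-1}(o,y)\diff\mu(y)
\end{equation*}
$\P$-almost surely. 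Intersecting the two full-measure events, on this intersection we have both $\bar\mu_n\to\mu$ in $\weak$ and convergence of the $(p-1)$-th moment at the single reference point $o$. By the definition of $\weak^{p-1}$ (and the equivalence noted in the text between $(p-1)$-th moment convergence for some $o$ and for all $o$), this is exactly $\bar\mu_n\to\mu$ in $\weak^{p-1}$.

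Once this almost sure convergence in $\weak^{p-1}$ is in hand, the conclusion is immediate: on the same full-measure event, Corollary~\ref{cor:main-cty} applied pathwise to the deterministic sequence $\{\bar\mu_n(\omega)\}_{n\in\N}$ gives
\begin{equation*}
\max_{\bar x_n\in M_p(\bar\mu_n)}\min_{x\in M_p(\mu)}d(\bar x_n,x) \to 0,
\end{equation*}
which is the desired SLLN. Note that Corollary~\ref{cor:main-cpt} also ensures $M_p(\mu)$ and each $M_p(\bar\mu_n)$ are non-empty and compact, so the $\max$ and $\min$ are attained and the expression makes sense.

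There is no serious obstacle here; the whole content of the corollary has been front-loaded into Theorem~\ref{thm:main} and its corollaries. The only minor subtlety is confirming that the $\P$-null sets coming from Varadarajan's theorem and from the scalar SLLN may be combined into a single $\P$-null set, which is automatic since there are only countably many of them (in fact two). All the geometric heavy lifting---tightness, weak subsequential limits, identification of the limit as a Fréchet mean, and the upgrade from weak to metric convergence via (W3)---has already been carried out in the proof of the main theorem.
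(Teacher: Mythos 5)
Your proposal is correct and follows essentially the same route as the paper: establish $\bar\mu_n\to\mu$ in $\weak^{p-1}$ almost surely by combining Varadarajan's theorem (which the paper unpacks slightly, applying the scalar SLLN to a countable determining family of bounded continuous test functions from \cite{Varadarajan2}) with the scalar SLLN for $d^{p-1}(o,Y_i)$, then conclude pathwise via Corollary~\ref{cor:main-cty}. The only cosmetic difference is that you cite Varadarajan's theorem as a black box while the paper reproves it inline; the null-set bookkeeping and the final appeal to the continuity result are identical.
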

	
	\begin{proof}
		By \cite{Varadarajan2}, we may let $\{\phi_k\}_{k\in\N}$ denote a sequence of bounded, continuous functions from $(X,d)$ to $\R$ such that Borel probability measures $\{\mu_n\}_{n\in\N}$ on $X$ have $\mu_n\to \mu$ in $\weak$ if and only if we have $\int_{X}\phi_k\diff \mu_n\to \int_{X}\phi_k\diff \mu$ as $n\to\infty$ for all $k\in\N$; thus, let us define the event
		\begin{equation*}
			C_k :=\left\{ \int_{X}\phi_k\diff \bar \mu_n\to \int_{X}\phi_k\diff \mu  \textnormal{ as } n\to\infty\right\}
		\end{equation*}
		for all $k\in\N$.
		We also define
		\begin{equation*}
			D :=\left\{ \int_{X}d^{p-1}(o,y)\diff \bar \mu_n\to \int_{X}d^{p-1}(o,y)\diff \mu  \textnormal{ as } n\to\infty\right\},
		\end{equation*}
		for arbitrary $o\in\N$, and we note by the well-known characterization of convergence in the Wasserstein topologies that we have $D\cap\bigcap_{k\in\N}C_k = \{\bar \mu_n\to \mu \textnormal{ in } \weak^{p-1}\}$.
		Of course, for each $k\in\N$ we have
		\begin{equation*}
			\P(C_k) = \P\left(\frac{1}{n}\sum_{i=1}^{n}\phi_k(Y_i)\to \int_{X}\phi_k\diff \mu\right) = 1
		\end{equation*}
		by the classical (real-valued) strong law of large numbers since $\phi_k$ is bounded.
		Similarly, the integrability assumption on $\mu$ implies
		\begin{equation*}
			\P(D) = \P\left(\frac{1}{n}\sum_{i=1}^{n}d^{p-1}(o,Y_i)\to \int_{X}d^{p-1}(o,y)\diff \mu(y)\right) = 1
		\end{equation*}
		by the classical (real-valued) strong law of large numbers.
		By taking the intersection of these events, we have shown $\P(\bar \mu_n\to \mu \textnormal{ in } \weak^{p-1}) = 1$, so the result follows from Corollary~\ref{cor:main-cty}.
	\end{proof}
	
	This result finally recovers the desired behavior for the strong law of large numbers, wherein 1 finite moment is sufficient for convergence of $M_2$.
	Additionally, it shows that no integrability is required in order to ge the convergence of $M_1$, which is a fundamental result for robust statistics in non-Euclidean spaces.
	These results allow us to reduce the moment conditions in the literature, where it is common to assume 2 finite moments in order to guarantee convergence of $M_2$. (See \cite{ManifoldsI, EvansJaffeFrechet,Schoetz2, WassersteinBarycenter, Huckemann, PanaretosSantoroBW}.) 
	Also, we empasize that the assumption of $p-1$ finite moments for the convergence of $M_p$ as above cannot, in general, be relaxed; this is because it is already known that the strong law of large numbers for $M_2$ in $\R$ fails without 1 finite moment.
	
	We also mention that this result, when combined with the results of Section~\ref{sec:weak}, yields new asymptotic theory for Fr\'echet means used in some infinite-dimensional settings of interest.
	For instance, we get a strong law of large numbers for Fr\'echet means in the space of unparameterized unregistered loops that we studied in Example~\ref{ex:loops}, which, despite being common tool in shape analysis and computational anatomy, has not been thoroughly studied from a probabilistic point of view.
	
	Lastly, we emphasize that the sort of strong convergence above holds in much more general settings:
	If $\bar \mu_n$ converges almost surely in $\weak^{p-1}$ to some $\mu$ (in fact, $\mu$ can be random) then $M_p(\bar \mu_n)$ converges to $M_p(\mu)$ in the sense above.
	So, for example, we may let $Y_1,Y_2,\ldots$ be a Harris-recurrent Marov chain, a stationary sequence, an exchangeable sequence, a 1-dependent (or $k$-dependent for fixed $k$) sequence, and more.
	
	\medskip
	
	The setting of stationary sequences above can be thought of as a problem in ergodic theory, and this deserves further attention.
	To make this discussion precise, we need to define a few notions.
	Recall that a locally compact group $G$ is called \textit{amenable} if it admits a sequence of compact subsets $\{F_n\}_{n\in\N}$ such that $m(F_n)<\infty$ for all $n\in\N$ and such that we have $m(gF_n\Delta F_n)/m(F_n)\to 0$ as $n\to\infty$ for all $g\in G$; here, $m$ is the left-invariant Haar measure of $G$, and $\{F_n\}_{n\in\N}$ is called a \textit{F{\o}lner sequence} of $G$.
	We say that a F{\o}lner sequence is \textit{tempered} if there exists a constant $C>0$ such that we have
	\begin{equation*}
		m\left(\bigcup_{1\le k < n}F_k^{-1}F_n\right)\le Cm(F_n)
	\end{equation*}
	for all $n\in\N$.
	It is known that every locally compact amenable group admits a tempered F{\o}lner sequence, and one can in fact ``thin out'' any given F{\o}lner sequence to form a tempered F{\o}lner sequence	\cite[Proposition~1.4]{Lindenstrauss}.
	Then we have the following:
	
	\begin{corollary}[pointwise ergodic theorem]\label{cor:ergodic}
		Suppose that $G$ is a locally compact amenable group endowed with a jointly measurable, measure-preserving action on a standard probability space $(\Omega,\F,\P)$, and let $Y:\Omega \to X$ be any measurable map.; we write $m$ for the left-invariant Haar measure of $G$, and we let denote $\{F_n\}_{n\in\N}$ any tempered F{\o}lner sequence of $G$.
		Also define $\mu_{Y,n}^{\omega}\in\Pcal(X)$ via
		\begin{equation*}
			\mu_{Y,n}^{\omega}:= \frac{1}{m(F_n)}\int_{F_n}\delta_{Y(g\cdot\omega)} \diff m(g)
		\end{equation*}
		for $\omega\in\Omega$.
		If we have $\int_{\Omega}d^p(x,Y(\omega))\diff \P(\omega)<\infty$ for some (equivalently, all) $x\in X$, then there exists a probability measure $\mu_{Y}^{\omega}\in\Pcal(X)$ for each $\omega\in\Omega$ such that $\mu_Y:\Omega\to\Pcal_{p-1}(X)$ is $G$-invariant and such that we have
		\begin{equation*}
			\max_{x_n \in M_p(\mu_{Y,n}^{\omega})}\min_{x\in M_p(\mu_{Y}^{\omega})}d(x_n,x)\to 0.
		\end{equation*}
		for $\P$-almost all $\omega\in\Omega$.
		Furthermore, if the action of $G$ is ergodic, then we have $\mu_{Y}^{\omega} = \P\circ Y^{-1}$ for $\P$-almost all $\omega\in\Omega$.
	\end{corollary}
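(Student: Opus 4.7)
The plan is to reduce the statement to Corollary~\ref{cor:main-cty} by showing that $\mu_{Y,n}^{\omega}\to \mu_Y^{\omega}$ in $\weak^{p-1}$ for $\P$-almost every $\omega\in\Omega$, where $\mu_Y^{\omega}$ is a suitably-defined random probability measure on $X$. The input tool for this will be Lindenstrauss's pointwise ergodic theorem for locally compact amenable groups along tempered F{\o}lner sequences: for each $f\in L^1(\Omega,\F,\P)$ the averages $\frac{1}{m(F_n)}\int_{F_n}f(g\cdot \omega)\diff m(g)$ converge $\P$-almost surely to $\E[f\,|\,\mathcal{I}](\omega)$, where $\mathcal{I}\subseteq \F$ is the sub-$\sigma$-algebra of $G$-invariant sets.

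First I would construct $\mu_Y^{\omega}$. Since $(\Omega,\F,\P)$ is standard, we may fix a regular conditional distribution $\P(\,\cdot\,|\,\mathcal{I})(\omega)$ of $\P$ given $\mathcal{I}$, and then define $\mu_Y^{\omega}:= \P(\,\cdot\,|\,\mathcal{I})(\omega)\circ Y^{-1}$. Next I would invoke \cite{Varadarajan2} to fix a sequence $\{\phi_k\}_{k\in\N}$ of bounded continuous functions $X\to\R$ which is convergence-determining for $\weak$, and fix an arbitrary $o\in X$. For each $k\in\N$ the function $\omega\mapsto \phi_k(Y(\omega))$ lies in $L^\infty(\P)\subseteq L^1(\P)$, and $\omega\mapsto d^{p-1}(o,Y(\omega))$ lies in $L^1(\P)$ since $d^{p-1}\le 1+d^p$ and the $p$th moment is finite by hypothesis. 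Thus Lindenstrauss's theorem yields, for each $k\in\N$, a $\P$-full-measure event
\begin{equation*}
C_k:=\left\{\int_X \phi_k\diff \mu_{Y,n}^{\omega}\to \int_X \phi_k\diff \mu_{Y}^{\omega}\right\},
\end{equation*}
and similarly a $\P$-full-measure event
\begin{equation*}
D:=\left\{\int_X d^{p-1}(o,y)\diff \mu_{Y,n}^{\omega}(y)\to \int_X d^{p-1}(o,y)\diff \mu_{Y}^{\omega}(y)\right\},
\end{equation*}
where the right-hand sides are precisely $\E[\phi_k\circ Y\,|\,\mathcal{I}](\omega)$ and $\E[d^{p-1}(o,Y)\,|\,\mathcal{I}](\omega)$ by construction of $\mu_Y^{\omega}$.

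On the countable intersection $D\cap \bigcap_{k\in\N}C_k$, which is also of full measure, the characterization of $\weak^{p-1}$ recalled in Section~\ref{sec:main} gives $\mu_{Y,n}^{\omega}\to \mu_Y^{\omega}$ in $\weak^{p-1}$. Applying Corollary~\ref{cor:main-cty} $\omega$-by-$\omega$ on this event then delivers the asserted one-sided Hausdorff convergence of Fr\'echet mean sets. The $G$-invariance of $\mu_Y:\Omega\to\Pcal_{p-1}(X)$ follows because the regular conditional distribution given the invariant $\sigma$-algebra $\mathcal{I}$ is itself $G$-invariant as a function of $\omega$ (up to a $\P$-null set, which we absorb into the exceptional set already discarded); alternatively, one can show $G$-invariance directly by noting that the F{\o}lner property forces the averaging map to commute asymptotically with the $G$-action in the relevant weak sense. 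In the ergodic case $\mathcal{I}$ is $\P$-trivial, so $\mu_Y^{\omega}=\P\circ Y^{-1}$ almost surely, giving the final clause.

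The main obstacle is the measure-theoretic step of constructing $\mu_Y^{\omega}$ as a genuine random element of $\Pcal_{p-1}(X)$ and matching the integrals $\int \phi_k\diff \mu_Y^{\omega}$ with the conditional expectations produced by Lindenstrauss's theorem, simultaneously for the countable convergence-determining family and for the moment function; once this bookkeeping is in place, the analytic content is entirely contained in Corollary~\ref{cor:main-cty}. A minor technicality is checking joint measurability of $\omega\mapsto \mu_{Y,n}^{\omega}$ (so that the a.s.\ statements make sense), which follows from the joint measurability of the action and Fubini applied to the integral definition of $\mu_{Y,n}^{\omega}$.
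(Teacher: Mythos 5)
Your proposal is correct and takes essentially the same route as the paper: a countable convergence-determining family $\{\phi_k\}_{k\in\N}$ from \cite{Varadarajan2}, the Lindenstrauss pointwise ergodic theorem applied to each $\phi_k\circ Y$ and to $d^{p-1}(o,Y)$, an intersection of countably many full-measure events to obtain $\mu_{Y,n}^{\omega}\to\mu_Y^{\omega}$ in $\weak^{p-1}$ almost surely, and then an application of Corollary~\ref{cor:main-cty}. The only (harmless) difference is that you construct $\mu_Y^{\omega}$ explicitly as the pushforward under $Y$ of a regular conditional distribution given the invariant $\sigma$-algebra --- thereby making genuine use of the standardness hypothesis --- whereas the paper defines $\mu_Y^{\omega}$ implicitly through the limiting values of $\int_X\phi_k\diff\mu_{Y,n}^{\omega}$; your construction is, if anything, the more careful of the two.
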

	
	\begin{proof}
		Let $\{\phi_k\}_{k\in\N}$ be as in the proof Corollary~\ref{cor:SLLN}.
		By the Lindenstrauss pointwise ergodic theorem for locally compact amenable groups \cite[Theorem~1.2]{Lindenstrauss}, the set
		\begin{equation*}
			C_k :=\left\{ \omega\in\Omega:\lim_{n\to\infty}\int_{X}\phi_k(y)\diff \mu_{Y,n}^{\omega}(y) \textnormal{ exists } \right\}
		\end{equation*}
		satisfies $\P(C_k)=1$ for all $k\in\N$, hence the set $C:=\bigcap_{k\in\N}C_k$ satisfies $\P(C) = 1$.
		Now for $\omega\in C$, we define the measure $\mu_Y^{\omega}\in\Pcal(X)$ via
		\begin{equation*}
			\int_{X}\phi_k(y)\diff \mu_Y^{\omega}(y):=\lim_{n\to\infty}\int_{X}\phi_k(y)\diff \mu_{Y,n}^{\omega}(y)
		\end{equation*}
		for all $k\in\N$; observe that if the action of $G$ is ergodic, then for each $\omega\in C$ we have $\int_{X}\phi_k(y)\diff \mu_{Y,n}^{\omega}(y)\to \int_{\Omega}\phi_k(Y(\omega))\diff \P(\omega)$ as $n\to\infty$ hence $\mu_Y^{\omega} = \P\circ Y^{-1}$.
		By construction we have $\mu_{Y,n}^{\omega}\to \mu_Y^{\omega}$ in $\weak$ for each $\omega\in C$.
		Next, observe that the function $d^{p-1}(o,Y):\Omega\to\R$ lies in $L^1((\Omega,\F,\P);\R)$ by the integrability assumption for fixed $o\in X$; thus, another application of the Lindenstraus theorem \cite[Theorem~1.2]{Lindenstrauss} shows that we have
		\begin{equation*}
			\int_{X}d^{p-1}(o,y)\diff \mu_{Y,n}^{\omega}(y) \to \int_{X}d^{p-1}(o,y)\diff \mu_{Y}^{\omega}(y)
		\end{equation*}
		as $n\to\infty$, for all $\omega\in D$ where $D\in\F$ is some set satisfying $\P(D) = 1$.
		In particular, we have $\mu_{Y,n}^{\omega}\to \mu_Y^{\omega}$ in $\weak^{p-1}$ for each $\omega\in C\cap D$, and of course $\P(C\cap D) = 1$.
		Finally, we conclude by applying Corollary~\ref{cor:main-cty}.
	\end{proof}
	
	Let us discuss a few special cases of this result.
	First of all, if $G=\N$ and one takes $F_n = \{0,\ldots, n\}$ for $n\in\N$, then we can see that this includes Birkhoff's pointwise ergodic theorem, but in the general setting of Fr\'echet $p$-means in metric spaces admitting a weak convergence.
	Second, if $(X,d)$ is a Hadamard space and $p=2$, then this recovers and strengthens a result of Austin \cite{Austin} by reducing the assumption of 2 finite moments to an assumption of 1 finite moment; this is arguably more natural the results \cite{Navas, EsSahibHeinich} which obtain the minimal moment assumption by studying a different notion of barycenter.
	
	\medskip
	
	Finally, we focus on large deviations theory, for which we need to introduce some notation.
	For Borel probability measures $\mu,\nu$ on $(X,d)$, we write
	\begin{equation*}
		H(\nu\,|\,\mu) := \begin{cases}
			\int_X \log\left(\frac{\diff \nu}{\diff \mu}\right) \diff \nu &\textnormal{ if } \nu \ll \mu \\
			\infty &\textnormal{ else}
		\end{cases}
	\end{equation*}
	called the \textit{relative entropy} of $\nu$ from $\mu$.
	Then we get the following:
	
	\begin{corollary}[large deviations principle]\label{cor:LDP}
		Suppose that $(\Omega,\F,\P)$ is a probability space, with expectation $\E$, supporting an independent, identically-distributed sequence $Y_1,Y_2,\ldots$ of $X$-valued random variables with common distribution $\mu$, and let $p\ge 1$.
		Also write $\bar \mu_n:=\frac{1}{n}\sum_{i=1}^{n}\delta_{Y_i}$ for the emirical meausure of the first $n\in\N$ points.
		Moreover, let us assume that $M_p(\bar \mu_n)$ is a singleton almost surely for all $n\in\N$.
		If we have $\int_{X}\exp(\lambda d^{p-1}(x,y))\diff \mu(y) = \E[\exp(\lambda d^{p-1}(x,Y_1))]<\infty$ for all $\lambda>0$ and some (equivalently, all) $x\in X$, then $\{M_p(\bar \mu_n)\}_{n\in\N}$ satisfies a large deviations principle in $(X,d)$ with good rate function
		\begin{equation*}
			I_{p,\mu}(x) := \inf\{H(\nu\,|\,\mu): \nu\in\mathcal{P}_{p-1}(X), M_p(\nu) =x\}
		\end{equation*}
		for $x\in X$.
	\end{corollary}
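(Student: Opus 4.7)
The strategy is Sanov plus contraction. First I would establish the large deviations principle for the empirical measures $\{\bar\mu_n\}$ as random elements of $(\Pcal_{p-1}(X), \weak^{p-1})$, with good rate function $\nu \mapsto H(\nu \,|\, \mu)$. The classical Sanov theorem provides this LDP in the weaker $\weak$ topology; to upgrade to $\weak^{p-1}$ one must prove exponential tightness of $\{\bar\mu_n\}$ with respect to the functional $\nu \mapsto \int_X d^{p-1}(o,y) \diff \nu(y)$. This is exactly where the exponential moment hypothesis $\E[\exp(\lambda d^{p-1}(o,Y_1))] < \infty$ for all $\lambda > 0$ enters: Cram\'er's theorem applied to $\tfrac{1}{n}\sum_{i=1}^n d^{p-1}(o,Y_i)$ yields exponential tail decay at every level, and truncation-and-projection arguments (as in the Wang--Wang--Wu Sanov theorem for Wasserstein topologies) bootstrap this to exponential tightness of $\{\bar\mu_n\}$ in $\weak^{p-1}$, while preserving the rate function $H(\cdot\,|\,\mu)$.

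Next I would apply the contraction principle along the set-valued map $M_p$. Corollaries~\ref{cor:main-cpt} and~\ref{cor:main-cty} together imply the relevant upper semicontinuity of $M_p : (\Pcal_{p-1}(X), \weak^{p-1}) \to \cpt(X)$: for every closed $C \subseteq X$ the set $\{\nu : M_p(\nu) \cap C \neq \emptyset\}$ is $\weak^{p-1}$-closed, and equivalently for every open $U \subseteq X$ the set $\{\nu : M_p(\nu) \subseteq U\}$ is $\weak^{p-1}$-open. The almost-sure singleton assumption on $M_p(\bar\mu_n)$ lets me pass between $M_p(\bar\mu_n) \in A$ and $M_p(\bar\mu_n) \cap A \neq \emptyset$, and similarly between $M_p(\bar\mu_n) \in U$ and $M_p(\bar\mu_n) \subseteq U$. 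Applying the Sanov upper bound to the closed set $\{\nu : M_p(\nu) \cap A \neq \emptyset\}$ for closed $A \subseteq X$, and the Sanov lower bound to the open set $\{\mu' : M_p(\mu') \subseteq U\}$ for open $U \subseteq X$, yields the two halves of the LDP for $\{M_p(\bar\mu_n)\}$. Goodness of $I_{p,\mu}$ follows because any sublevel set $\{I_{p,\mu} \le c\}$ is contained in the image $M_p(\{H(\cdot\,|\,\mu) \le c\})$ of a $\weak^{p-1}$-compact set under an upper semicontinuous compact-valued map, hence is $d$-compact in $X$.

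The main obstacle is twofold. First, the strengthening of classical Sanov from $\weak$ to $\weak^{p-1}$ is not purely formal and requires a careful exponential tightness argument tailored to the exponential moment hypothesis; this is the technical heart of the proof and the only real input beyond the machinery developed in Section~\ref{sec:main}. Second, there is a subtle asymmetry in the rate functions produced by the two halves of the contraction argument — the upper bound naturally yields $\inf\{H(\nu \,|\, \mu) : x \in M_p(\nu)\}$ while the lower bound yields $\inf\{H(\nu \,|\, \mu) : M_p(\nu) = \{x\}\}$ — and reconciling them to the single rate function $I_{p,\mu}$ in the statement relies on combining the almost-sure singleton assumption with a perturbation argument showing that $H$-minimizing $\nu$'s realizing $x \in M_p(\nu)$ can be approximated (at no cost in relative entropy) by $\nu$'s with $M_p(\nu) = \{x\}$.
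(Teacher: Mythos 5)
Your overall strategy is the same as the paper's: a Sanov-type large deviations principle for $\{\bar\mu_n\}_{n\in\N}$ in $(\Pcal_{p-1}(X),\weak^{p-1})$ followed by a contraction along $M_p$. The first step is handled in the paper by a single citation to a Sanov theorem in Wasserstein-type topologies (the reference you correctly identify), so your sketch of exponential tightness is describing the proof of a result the paper simply imports; that part is fine. Where you diverge is the contraction step: the paper invokes Corollary~\ref{cor:second-cty} and the remarks following it to treat $M_p$ as a \emph{single-valued continuous} map $(\Pcal_{p-1}(X),\weak^{p-1})\to(X,d)$ and then applies the standard contraction principle, whereas you run a set-valued upper/lower semicontinuity argument and are then forced to reconcile two different candidate rate functions.

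That reconciliation is where your argument has a genuine gap. Your upper bound produces the rate $\inf\{H(\nu\,|\,\mu): M_p(\nu)\cap A\neq\emptyset\}$, which is in general \emph{smaller} than $\inf_{x\in A}I_{p,\mu}(x)=\inf\{H(\nu\,|\,\mu): M_p(\nu)=\{x\},\ x\in A\}$, so the bound you obtain is strictly weaker than the one claimed unless the two infima coincide. Your proposed fix --- perturbing an $H$-minimizing $\nu$ with $x\in M_p(\nu)$ into one with $M_p(\nu)=\{x\}$ ``at no cost in relative entropy'' --- is asserted rather than proved, and it is far from clear it can be carried out in an abstract separable metric space admitting a weak convergence: there is no mechanism in the paper's framework for forcing uniqueness of the Fr\'echet mean by a small perturbation, and $H(\cdot\,|\,\mu)$ is only lower semicontinuous, so approximation arguments control the entropy from the wrong side. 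The paper avoids this issue entirely by restricting to the setting (flagged in the remarks after Corollary~\ref{cor:second-cty}) where $M_p$ is single-valued on the relevant class of measures, so that the classical contraction principle applies verbatim with the stated rate function. If you want to keep your set-valued route, you would need either to prove the equality of the two infima or to weaken the conclusion to an upper bound with the smaller rate, as the paper itself does in its remark about the non-unique case.
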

	
	\begin{proof}
		By \cite[Theorem~1.1]{SanovWasserstein} and the assumption $\E[\exp(\lambda d^{p-1}(x,Y_1))]<\infty$ for some $\lambda>0$ and all $x\in X$, the random measures $\{\bar \mu_n\}_{n\in\N}$ satisfy a large deviations principle in $(\mathcal{P}_{p-1}(X),\weak^{p-1})$ with good rate function $H(\,\cdot \, | \, \mu)$.
		Also, Corollary~\ref{cor:second-cty} and the remarks thereafter guarantee that the map $M_p:(\Pcal_{p-1}(X),\weak^{p-1})\to (X,d)$ is continuous.
		Thus, the result follows from the contraction principle \cite[Theorem~4.2.1]{LargeDeviations}.
	\end{proof}
	
	Some remarks about this result are due.
	First of all, we note that, if the Fr\'echet means are not assumed to be unique, then one can still prove a large deviations upper bound for the random sets $\{M_p(\bar \mu_n)\}_{n\in\N}$ in the space $(\cpt(X),\dvechaus)$; however, we are not aware of how to prove the lower bound in general, since $(\cpt(X),\dvechaus)$ is not Hausdorff.
	
	Second, we note that the optimality of the moment assumption is rather important.
	If we were to require $\mu_n\to \mu$ in $\weak^p$ in Theorem~\ref{thm:main}, then we would require $\E[\exp(\lambda d^p(x,Y_1))]<\infty$ for all $\lambda>0$ and some (equivalently, all) $x\in X$ in Corollary~\ref{cor:LDP}.
	However, if $p\ge 2$, then the latter condition is too strong to even cover the case that $d(x,Y_1)$ has sub-Gaussian tails.
	
	Third, let us remark that the rate function appearing above is not so useful in its current form.
	Indeed, it is written as the solution to a relative entropy projection problem under a constraint on the Fr\'echet mean.
	Interestingly, recent work \cite{JaffeSantoroLDP} has shown, in the case of the Wasserstein space, Bures-Wasserstein space, and Riemannian manifolds, that one can massage this rate function into a geometrically meaningful and numerically computable form.
	
	\nocite{*}
	\bibliography{FrechetMeansInfDim}
	\bibliographystyle{siam}
	
\end{document}